\newcommand{\UUU}{\color{black}}
\newcommand{\RRR}{\color{black}}
\newcommand{\EEE}{\color{black}}
\newtheorem{thm}{Theorem}[section]
\newtheorem{lem}[thm]{Lemma}
\newtheorem{prop}[thm]{Proposition}
\newtheorem{defin}[thm]{Definition}
\newtheorem{remark}[thm]{Remark}
\def\enne{\mathbb{N}}
\def\zeta{\mathbb{Z}}
\def\erre{\mathbb{R}}
\def\Rz{\mathbb{R}}
\def\Nz{\mathbb{N}}
\def\P{\mathbb{P}}
\def\E{\mathop{{}\mathbb{E}}}
\def\cL{\mathscr{L}}
\def\cF{\mathscr{F}}
\def\eps{\varepsilon}
\def\OO{\mathcal{O}}
\def\beq{\begin{equation}}
\def\eeq{\end{equation}}
\def\to{\rightarrow}
\def\wto{\rightharpoonup}
\def\wstarto{\stackrel{*}{\rightharpoonup}}
\def\embed{\hookrightarrow}
\def\cembed{\stackrel{c}{\hookrightarrow}}
\def\norm #1{\left\|#1\right\|}
\def\sp #1#2{\left<#1,#2\right>}
\newcommand\ip\sp
\def\qv #1{\left<#1\right>}
\begin{document}
\title[Doubly nonlinear stochastic evolution equations]{Doubly nonlinear stochastic evolution equations}

\author{Luca Scarpa}
\address[Luca Scarpa]{Faculty of Mathematics, University of Vienna, 
Oskar-Morgenstern-Platz 1, 1090 Wien, Austria.}
\email{luca.scarpa@univie.ac.at}
\urladdr{http://www.mat.univie.ac.at/$\sim$scarpa}

\author{Ulisse Stefanelli}
\address[Ulisse Stefanelli]{Faculty of Mathematics, University of Vienna, 
Oskar-Morgenstern-Platz 1, 1090 Wien, Austria  and  Istituto di Matematica
Applicata e Tecnologie Informatiche \textit{{E. Magenes}}, v. Ferrata 1, 27100
Pavia, Italy.}
\email{ulisse.stefanelli@univie.ac.at}
\urladdr{http://www.mat.univie.ac.at/$\sim$stefanelli}

\subjclass[2010]{35K55, 35R60, 60H15}

\keywords{Maximal monotone operators, doubly nonlinear stochastic
equations, strong and martingale solutions, existence, generalized It\^o's formula}   

\begin{abstract}
Nonlinear diffusion problems featuring stochastic effects may be
described by stochastic partial differential equations of the form 
$$d\alpha(u) - {\rm div}\,(\beta_1(\nabla u)) \, dt + \beta_0(u)\, dt\ni f(u) \, dt +
  G(u) \, dW.$$
We present an existence theory for such equations under general monotonicity assumptions on the nonlinearities.
In particular, $\alpha$, $\beta_0$, and $\beta_1$ are allowed to be
multivalued, as required by the modelization of second-order solid-liquid
transitions. In this regard, the equation corresponds to a
nonlinear-diffusion version of the classical two-phase Stefan
problem with stochastic perturbation.

The existence of martingale solutions is proved via
regularization and passage-to-the-limit.
The identification of the limit is obtained by a
lower-semicontinuity argument based on a suitably generalized It\^o's
formula.  Under some more restrictive assumptions on the
nonlinearities,  existence and
uniqueness of strong solutions follows.  Besides the relation
above, the theory covers equations with nonlocal
terms as well as systems. 
\end{abstract}

\maketitle


\section{Introduction}
\setcounter{equation}{0}
\label{sec:intro}

Nonlinear diffusion arises ubiquitously in
applications. Porous-media dynamics, the
Hele-Shaw cell, non-Newtonian fluids, and the Mean-Curvature flow are examples of relevant
applied settings falling into this class. The corresponding partial
differential models generally feature strong
nonlinearities, also of degenerate or singular type, which in turn
pose severe analytical challenges.

In the paper we focus on a class of stochastic partial differential
equations (SPDEs) arising in nonequilibrium
thermodynamics and describing solid-liquid phase
transitions. In particular, we are concerned with SPDEs of the form 
\begin{equation}
  \label{eq:00}
  d\alpha(u) - {\rm div}\,(\beta_1(\nabla u)) \, dt + \beta_0(u)\, dt\ni f(u) \, dt +
  G(u) \, dW . 
\end{equation}
The scalar-valued $u$ models the {\it temperature} of a specimen
and $\alpha(u)$ with $\alpha : \Rz\to 2^\Rz$ maximal monotone represents the
{\it internal energy} of the system. Note that the latter is possibly
multivalued, modeling the case of so-called {\it second-order} phase
change, namely the occurrence of a latent heat of transformation. The field $\beta_1(\nabla u)$ with $\beta_1
: \Rz^d \to 2^{\Rz^d}$ maximal monotone is a generalized nonlinear
{\it heat flux},  $-\beta_0(u)+f(u)$ with $ \beta_0 : \Rz\to 2^\Rz$ maximal
monotone and $f$ Lipschitz-continuous is a {\it deterministic} temperature-dependent
forcing, while the term $G(u)\, dW$ models a {\it stochastic} temperature-dependent
forcing instead. Here, $W$ is a 
suitably defined Wiener process, see below.


Under different choices for the nonlinearities, relation
~\eqref{eq:00} arises in relation with different models for \EEE nonequilibrium
thermodynamics. In particular, by letting the graph $\alpha$ include a vertical
segment through the origin, equation \eqref{eq:00} corresponds to a
nonlinear-diffusion version of the two-phase Stefan
problem with stochastic perturbation. 
More specifically, one can choose $\alpha=\operatorname{sign}+\tilde\alpha$, 
where $\tilde\alpha:\erre\to\erre$ is a nondecreasing continuous function and
$\operatorname{sign}:\erre\to2^\erre$ is the sign-graph, defined as
$\operatorname{sign}(r) = r/|r|$ for $r\not = 0$ and
$\operatorname{sign}(0) = [-1,1]$. By allowing noncoercive graphs
$\alpha$, an option which is
however not covered by our analysis, relation \eqref{eq:00} arises in
connection with the Hele-Shaw cell and filtration through porous
media as well.
The reader is referred to {\sc
Visintin} \cite{Visintin96} for a discussion on the relevance of
relation \eqref{eq:00} in the frame of phase-transition modeling.

The focus of the paper is on the existence of solutions for
SPDEs of the class of relation \eqref{eq:00}.
In particular, we focus on a variational reformulation of the
initial-boundary value problem for 
\eqref{eq:00} in terms of the abstract stochastic doubly nonlinear
equation
\begin{equation}\label{eq:0}
  d(Au) + Bu\,dt \ni F(u)\,dt + G(u)\,dW\,, \qquad (Au)(0)\ni v_0\,.
\end{equation}
Here, the process $u$ takes values in the real,
separable Hilbert space $V$, which embeds compactly and densely into
a second Hilbert space $H$. 
The operators $A=\partial \varphi:H\to 2^H$ and $B:V \to 2^{V^*}$ 
are maximal monotone, possibly multivalued, coercive and linearly
bounded. The operator $A$ is assumed to be cyclic monotone, with
G\^ateaux-differentiable inverse $A^{-1}$ (see below), and the map $F:H \to H$ is
Lipschitz-continuous. Eventually, $W$ is a cylindrical Wiener process
on a third, separable Hilbert space $U$ and the time-dependent
operator $G$ takes values in the space of Hilbert-Schmidt
operators from $U$ to $H$. The term {\it doubly nonlinear} refers here
to the fact that both operators $A$ and $B$ are nonlinear.

In the deterministic case $G=0$, problem \eqref{eq:0}
is classical. Its analysis can be traced back at least to {\sc Bardos \&
Brezis} \cite{BB} and  {\sc
Raviart} \cite{Raviart}. Other early contributions are from {\sc
Grange \& Mignot} \cite{Grange-Mignot72}, {\sc Barbu} \cite{Barbu75},
{\sc DiBenedetto \& Showalter} \cite{diben-show}, {\sc Alt \&
Luckhaus} \cite{Alt-Luckhaus}, and {\sc Bernis} \cite{Bernis88}. For
a collection of further developments, the reader can check 
\cite{A06,AH04,AH04b,akst4,GS04,Gilardi08,Hokkanen91,Hokkanen92b,Hokkanen92,MW-dn,vol,fico,ganzo},
among many others. 

In the stochastic case $G\not =0$, problem \eqref{eq:0} is well-studied
for $A$ linear and not degenerate. The reader is referred to
the seminal contributions by {\sc Pardoux} \cite{Pard0,Pard} and {\sc Krylov
\&  Rozovski\u \i} \cite{kr}, and to the monographs \cite{dapratozab,pr},
for a general overview. 
In the context of variational approach, problem \eqref{eq:0} with $A$
linear has been proved by {\sc Gess} \cite{Gess} to admit 
strong solutions for $B$ cyclic monotone and subhomogeneous.
Well-posedness from
a variational approach have been obtained also
under no growth condition on the drift
in \cite{mar-scar-diss, mar-scar-ref, mar-scar-erg} for semilinear equations,
in \cite{mar-scar-note, mar-scar-div, scar-div} for equations in divergence form,
and in \cite{barbu-daprato-rock,orr-scar,scar-SCH} for 
porous-media, Allen-Cahn, and Cahn-Hilliard equations.

Alternatively to the variational approach, the analysis of stochastic evolution equations 
in the form \eqref{eq:0} (in the case $A$ linear)
have been developed in several directions.
First of all, in the classical work
by {\sc Bensoussan \& Rascanu} \cite{BenRas},
existence of strong and martingale solutions   in terms
of stochastic variational inequalities 
is proved.
Stochastic variational inequalities have then been
used to formulate weaker concepts of solutions
also for other types of equations
as divergence-form equations   \cite{GessRoc}
or fast-diffusion equations \cite{GessRoc-fast}.
More recently, an operator approach to
monotone equations with maximal monotone drift
and linear multiplicative noise has been given 
by {\sc Barbu \& R\"ockner} in \cite{BarRoc-op}:
here solutions are defined using a suitable 
rescaling argument and monotonicity 
techniques in spaces of stochastic processes.

The case of $A$ nonlinear and $B$ linear has been originally treated by {\sc Barbu \& Da~Prato}
\cite{Barbu-DaPrato} (see also \cite{BBT14,KM16}) in
the framework of the two-phase stochastic Stefan problem. 
There, the authors study equation \eqref{eq:00}
with the choices $\beta_1=I$, $\beta_0=f=0$, and
$\alpha=\operatorname{sign}+I$, with $I$ being
the identity on $\Rz$. Existence and uniqueness of strong 
solutions is obtained through a suitable change of variable,
rewriting the equation in the dual space $H^{-1} $,
and using regularization and passage to the limit techniques. Such
reformulation hinges on the linearity of $\beta_1=I$.

The only contribution tackling the genuinely doubly
nonlinear case is {\sc Sapountzoglou,  Wittbold,
\&~Zimmermann} \cite{SWZ18}, where nonetheless $A$
is assumed to be Lipschitz-continuous, $B$ has the specific
divergence form of \eqref{eq:00}, and $F=0$. 
By contrast, these assumptions are dropped here. In
particular, we stress that
in our analysis
$A$ is allowed to be multivalued, as for
$\alpha=\operatorname{sign}+\tilde\alpha$ mentioned above.
As far as the operator $B$ is concerned, 
our techniques rely on the Hilbert stricture of the space $V$
and require a linear growth assumption on $B$,
while in \cite{SWZ18} also $p$-growth conditions 
for $B$ are included. In this direction, 
let us mention that in order to study equations in the form
\eqref{eq:0} with both $A$ multivalued and 
$B$ possibly of $p$-growth (as for example 
in the case of pseudo monotone operators)
one should set the problem in a Banach space framework for $V$.
This requires completely different techniques than the 
ones used in this paper, and it is currently studied 
in a work in progress. 

Our setting is exactly that of {\sc Di Benedetto \& Showalter}
\cite{diben-show}, whose findings we extend here to the stochastic case. 
In particular, note that doubly nonlinear equations in the form
\eqref{eq:0} cannot be treated using existing techniques. Indeed, 
the intuitive substitution $v=A(u)$ does not work, 
as the operator $B\circ A^{-1}$ is not well-defined on $V^*$
due to the lack of coercivity of $A$ on $V$. For this reason, 
equations in the form \eqref{eq:0} are usually referred to as
{\it implicit}, in the sense that they {\em cannot} be rewritten in the form
$dv + \tilde B v\,dt = G\,dW,$
for any suitable choice of the operator $\tilde B$.

Our main result is the existence of
martingale solutions to problem \eqref{eq:0} (Theorem \ref{th:1}). These are
obtained via a regularization and passage-to-the-limit procedure. The
identification of the nonlinear term $Au$ in the limit directly
follows from a compactness argument, based on the linear boundedness of $A$ in
the intermediate space $H$. On the other hand, the limiting
$Bu$ is identified by a semicontinuity
argument, which in turn hinges on the availability of an It\^o
formula for $\varphi^*$  (Proposition
\ref{prop:ito}), where $\partial \varphi^*=
A^{-1}$.
This however does not
fall within the framework of {\sc Pardoux} \cite{Pard}
or {\sc Da Prato \& Zabczyk} \cite{dapratozab} due to the nonlinearity 
of $A$. Note that 
well-known approximation techniques
based on regularization through linear smoothing operators
are ineffective in our framework, since they are in general
not compatible with the nonlinearity $A$.
Such difficulties are overcome by an {\em ad-hoc} 
regularization based on smoothing nonlinear elliptic operators:
this procedure turns out to be effective for our purpose, 
and requires a specific and detailed asymptotic analysis.
For the validity of It\^o's formula, whose proof represents the
technical core of the paper, one has to ask $A^{-1}$ to be
well-behaved. In particular, $A^{-1}$ is here assumed to be 
G\^ateaux differentiable and its differential $D(A^{-1})$ to be smooth
enough. These smoothness assumptions are nonetheless fulfilled in the
case of \eqref{eq:00} whenever  
$\alpha^{-1}$ is smooth enough
(see Section \ref{sec:appl}). 
Let us 
point out that such condition is satisfied 
also when $\alpha$ has
the form $\alpha=\operatorname{sign}+\tilde\alpha$, as in the doubly
nonlinear stochastic Stefan problem. In particular, $\alpha$ needs not
be Lipschitz-continuous nor single-valued.

Exactly as in the deterministic situation
\cite{diben-show},
in case $A$ or $B$ is linear, continuous, and symmetric, one can prove that martingale solutions are
unique. It hence turns out that they are 
also strong in probability (Theorem \ref{th:2}).
The conditions ensuring uniqueness are essentially sharp,
in the sense that if any of them is not satisfied then nonuniqueness
of solutions to problem
\eqref{eq:0} may occur, 
even if $F=G=0$ and $V=H=\erre$.

Let us now briefly summarize how the paper is structured.
We fix the setting and state our main results in Section~\ref{sec:main}. 
A collection of preliminary observations on the
approximation of the nonlinear operators is recorded
in Section~\ref{sec:prelim}. Then, Section~\ref{sec:ito} is devoted to
the proof of the above-mentioned It\^o formula. This is used in
Section~\ref{sec:exist} in order to prove the existence of martingale
solutions, namely Theorem~\ref{th:1}. Theorem~\ref{th:2} on existence
and uniqueness of strong solutions in probability is then proved in
Section~\ref{sec:uniq_strong}. Moving from the abstract theory,
in Section~\ref{sec:appl} we discuss the existence of solutions to
SPDEs of the
form \eqref{eq:00}. In addition, classes of SPDEs with nonlocal terms
and of SPDE systems are also proved to be solvable.
 

\section{Setting and statement of the main results}
\label{sec:main}

Let $(\Omega,\cF, \P)$ be a probability space endowed with 
a filtration $(\cF_t)_{t\in[0,T]}$ which is complete and right-continuous, 
where $T>0$ is a fixed final time. 

For any Banach space $E$ we shall use the usual symbols 
$L^p(\Omega; E)$, $L^p(0,T; E)$ and $C^0([0,T]; E)$
for the spaces of $p$-Bochner integrable $E$-valued functions on $\Omega$
and $(0,T)$, and for the space of continuous functions $[0,T]\to E$, respectively.
We will also use the symbol $L^0(\Omega; E)$ for the space of 
measurable functions from $(\Omega,\cF)$ to $E$.
Furthermore, if $E_1$ and $E_2$ are Banach spaces, the symbols $\cL(E_1, E_2)$,
$\cL_s(E_1,E_2)$ and $\cL_w(E_1,E_2)$
denote the space of linear continuous operators from $E_1$ to $E_2$ endowed with 
the norm topology, strong operator topology, or weak operator topology, respectively.
If $E_1$ and $E_2$ are Hilbert spaces, we shall also use $\cL^1(E_1,E_2)$
and $\cL^2(E_1,E_2)$ to indicate the spaces of trace-class and Hilbert-Schmidt
operators from $E_1$ to $E_2$, respectively.

Let $W$ be a cylindrical Wiener process
on a separable Hilbert space $U$. This amounts to saying that
$W$ is {\em formally} defined as the infinite sum
\[
  W(t)=\sum_{k=0}^{\infty}\beta_k(t) e_k\,, \qquad t\in[0,T]\,,
\]
where $(e_k)_k$ is a complete orthonormal system in $U$ and $(\beta_k)_k$
are real-valued independent Brownian motions.

Let $V$ and $H$ be separable Hilbert spaces such that $V\embed H$
densely, continuously and compactly. By identifying $H$ with its dual $H^*$,
$(V,H,V^*)$ turns out to be a classical Hilbert
  triplet. In particular,
\[
  V  \cembed H \cembed V^*\,,
\]
where all inclusions are dense, continuous, and compact. 
Norms, scalar products, and dualities will be denoted by the symbols
$\norm{\cdot}$, $(\cdot,\cdot)$, and $\ip{\cdot}{\cdot}$, respectively, with a sub-script
specifying the spaces in consideration. We shall denote by
$R:V\to V^*$ the Riesz isomorphism of $V$ and define the Hilbert space 
\[
  V_0:=\left\{x\in V: \; Rx\in H\right\}\,, \qquad
  \norm{x}_{V_0}^2:=\norm{x}_V^2 + \norm{Rx}_H^2\,, \quad x\in V_0\,.
\]
Note that the inclusion $V_0\embed V$ is compact and dense. Indeed,
if $(x_n)_n\subset V_0$, $x\in V_0$ and $x_n\wto x$ in $V_0$, then
by compactness of $V$ in $H$ we have $x_n\to x$ strongly in $H$; since also
$Rx_n\wto Rx$ in $H$, we infer that 
\[
  \norm{x_n}_V^2=\ip{Rx_n}{x_n}_V=(Rx_n,x_n)_H \to (Rx, x)_H=\norm{x}_V^2\,,
\]
so that $x_n\to x$ in $V$ strongly, and $V_0\cembed V$ compactly. Moreover, 
it is also not difficult to check that
$V_0$ is dense in $V$.

The following assumptions will be in order throughout the work:
\begin{description}
  \item[(H1)] $A=\partial \varphi$, where $\varphi:H\to[0,+\infty)$
    is proper, convex, and lower-semicontinuous, $A(0)\ni0$, 
	and there exists $C_A>0$ such that 
	\[
	\norm{y}_H\leq C_A(1+\norm{x}_H) \qquad\forall\,x\in H\,,\quad\forall\,y\in A(x)\,.
	\]
	For any $\eps>0$, the $\eps$-Yosida approximation of $A$ will
        be denoted by $A^\eps$, namely $A^\eps:= (I-(I +\eps
        A)^{-1})/\eps$ where $I$ is the identity on $H$ (see \cite{brezis}), and 
	we assume that 
	\[
	(A^\eps(x),Rx)_H\geq 0 \qquad\forall\,x\in V_0\,,\quad\forall\,\eps>0\,.
        \] 
  \item[(H2)] $A$ is strongly monotone on $H$, i.e.~there exists $c_A>0$ such that 
  	$$
	(y_1-y_2, x_1-x_2)_H\geq c_A\norm{x_1-x_2}_H^2 \qquad\forall\,x_i\in D(A)\,, \quad
	\forall\,y_i\in A(x_i)\,, \quad i=1,2\,.
	$$
	Since $A(0)\ni0$, this implies in particular that $A$ is coercive on $H$, hence also surjective
	by maximal monotonicity, and that the inverse
  	operator $A^{-1}:H\to H$ is well-defined and Lipschitz-continuous.
  \item[(H3)] $A^{-1}:H\to H$ is G\^ateaux-differentiable and there exists
        a Banach space $Y\embed H$ continuously and densely such that
        \begin{align*}
        &D(A^{-1})\in C^0(H; \cL_w(H,H)\cap \cL(Y,H))\,,\\
        &\left[I + D(A^{-1})((I+A^{-1})^{-1}x)\right]^{-1}\in \cL(V,Y) \quad\forall\, x\in V\,.
        \end{align*}
        Note that such assumption implies that $(\varphi^*)_{|V}\in C^2(V)$, so that 
        in particular $D(A^{-1})(y)$ is symmetric for all $y\in V$.
  \item[(H4)] for any family $(x_\eps)_{\eps>0}\subset V$, $x\in V$, and $y\in A(x)$ 
  such that $x_\eps\wto x$ in $V$ and $A^\eps(x_\eps)\wto y$ in $H$ as $\eps\searrow0$, 
  it holds
  \[
  D(A^{-1})(A^\eps(x_\eps))\to D(A^{-1})(y) \qquad\text{in } \cL_s(H,H)\,.
  \]
  This assumption is of a technical nature and has to be checked 
  in each specific problem. Note nonetheless that it is satisfied in
  several relevant situations (see Section~\ref{sec:appl}
  for some concrete examples).
  \UUU
  \item[\RRR(H5)\EEE] there exists a separable Hilbert space $Z\subset V$, densely embedded in $H$,
a constant $\eta\in (1/3,1/2)$, and an increasing function $f:[0,+\infty)\to[0,+\infty)$
such that, for every $x\in V$ it holds that 
\[
D(A^{-1})(A^{\eps}(x)) \in \mathscr L(Z, D(R^\eta)) \qquad\forall\,\eps>0
\]
and
\[
  \norm{R^\eta D(A^{-1})(A^{\eps}(x))h}_H \leq f(\norm{x}_V)\norm{h}_Z
  \qquad\forall\,h\in Z\,,\quad\forall\,\eps>0\,.
\]
\EEE
  \item[\RRR(H6)\EEE] $B:V\to 2^{V^*}$ is maximal monotone and there exists $C_B, c_B>0$ such that 
  	\begin{align*}
&	\norm{y}_{V^*}\leq C_B\left(1+\norm{x}_V\right)  \ \
  \text{and} \ \ 	\ip{y}{x}_V\geq c_B\norm{x}_V^2  \qquad\forall\, x\in V\,, \quad\forall\,y\in B(x)\,.
	\end{align*}
  \item[\RRR(H7)\EEE] $F:[0,T]\times H\to H$ is measurable with
  $F(\cdot,0)\in L^2(0,T; H)$ and
  there exists $L_F>0$ such that,
  for almost every $t\in (0,T)$,
  \[
    \norm{F(t,x_1)-F(t,x_2)}_{H}\leq L_F\norm{x_1-x_2}_H \qquad
    \forall\,x_1,x_2\in H\,.
  \]
  \item[\RRR(H8)\EEE] $G:[0,T]\times H\to\cL^2(U,H)$ is measurable and there exists $L_G>0$
  such that, for every $t\in[0,T]$ and $x_1,x_2,x\in H$,
  \begin{align*}
  \norm{G(t,x_1)-G(t,x_2)}_{\cL^2(U,H)}&\leq L_G\norm{x_1-x_2}_H\,,\\
  \norm{G(t,x)}_{\cL^2(U,H)}&\leq L_G(1+\norm{x}_H)\,.
  \end{align*}
  \item[\RRR(H9)\EEE] $v_0\in L^q(\Omega,\cF_0; H)$, $\varphi^*(v_0)\in L^{q/2}(\Omega,\cF_0)$,
  $u_0:=A^{-1}(v_0)\in L^q(\Omega,\cF_0; V)$, for $q>2$.
\end{description}

Let us comment now on assumptions {(H1)--\RRR(H6)\EEE}, pointing out their major consequences
and giving some sufficient conditions for these to hold. For explicit examples
of operators $A$ and $B$ we refer to Section~\ref{sec:appl}.

\begin{remark}[Hypothesis {(H1)}]\rm
	Assumption {(H1)} is very common in the context of doubly nonlinear evolution equations:
	see for example \cite{diben-show}. In particular, it
        entails that
	$A_{|V}:V\to 2^{V^*}$ is maximal monotone.  
	Indeed, the monotonicity is trivial. As for the maximality, note that for any $\eps>0$
	the Yosida approximation $A^\eps:H\to H$ is Lipschitz-continuous, so that 
	for every $y\in V^*$ there is a unique $x_\eps\in V$ such that 
	\[
 	R x_\eps + A^\eps x_\eps = y\,.
	\]
	Testing by $x_\eps\in V$, it follows that
	\[
	  \norm{x_\eps}_V^2 + (A^\eps x_\eps, x_\eps)_H = 
	  \ip{y}{x_\eps}_V\leq \frac12\norm{x_\eps}_V^2
	  +\frac1{2}\norm{y}_{V^*}^2\,,
	\]
	so that $(x_\eps)_\eps$ is bounded in $V$. Recalling {(H1)} 
	we deduce that $(A_\eps x_\eps)_\eps$
	is bounded in $H$. Hence, there are $x\in V$ and $z\in H$ such that, as $\eps\searrow0$,
	$x_\eps\wto x$ in $V$, $x_\eps\to x$ in $H$, and $A^\eps x_\eps\wto z$ in $H$, 
	which yield $z\in Ax$ by strong-weak closure of $A$ in $H\times H$. Letting $\eps\searrow0$
	we also deduce that $Rx + z = y$, from which we conclude
        that $A_{|V}:V\to 2^{V^*}$ is maximal.
        Since one readily has that
$A_{|V}\subseteq\partial\varphi_{|V}: V \to 2^{V^*}$,
	by maximality of $A_{|V}$ it holds $A_{|V}=\partial\varphi_{|V}$. 
\end{remark}

\begin{remark}[Hypothesis {(H2)}]\rm
	Note that, although $A^{-1}:H\to H$ is
        Lipschitz-continuous by {(H2)}, 
	 $A$ can still be multivalued.
	A relevant class of strongly monotone operators $A$ is 
	given by those of the form $A=cI + \tilde A$, where $\tilde A$ is maximal monotone
	on $H$ and $c>0$: see Section~\ref{sec:appl}.
\end{remark}

\begin{remark}[Hypothesis {(H3)}]\rm
	We are requiring that $A^{-1}$ is G\^ateaux-differentiable on the whole space $H$,
	with G\^ateaux derivative $D(A^{-1})$ continuous from $H$ to $\cL_w(H,H)$,
	and Fr\'echet-differentiable with continuous derivative in $Y$. Of course, if $A^{-1}\in C^1(H,H)$
	these conditions are easily satisfied. However, in many applications the operator $A^{-1}$
	{\em is not} Fr\'echet-differentiable in $H$, so it is important to require less stringent assumptions
	as in {(H3)}. For example, if $A$ is the Nemitzsky operator associated to a maximal monotone graph 
	$\alpha$ on $\erre$, 
	it is well-known that $A^{-1}$ is always G\^ateaux differentiable as soon as $\alpha^{-1}$ is 
	$C^{1,1}$, and is Fr\'echet-differentiable if and only 
	if the $\alpha^{-1}$ is an affine function, which is clearly a too
	restrictive condition. Further details are given in
	Section~\ref{sec:appl}.
\end{remark}

\begin{remark}[Hypothesis {(H4)}]\rm
	If $A$ is the Nemitzsky operator associated to a maximal monotone graph 
	$\alpha$ on $\erre$, as in the framework of problem \eqref{eq:00},
	it can be seen that (H4) is always satisfied when $\alpha$ is a continuous
	function (single-valued), not necessarily
        Lipschitz-continuous. Additionally, one could consider {\em
          multivalued} graphs $\alpha$ as well, see
Section~\ref{sec:appl} for details.
\end{remark}

\begin{remark}[Hypothesis {\RRR(H6)\EEE}]\rm
In  \cite{SWZ18} the operator $B$ is just required to be polynomially
bounded, which calls for framing the problem in a Banach-space
setting. We impose a linear bound on $B$ instead, see \RRR(H6)\EEE, which
allows a formulation in Hilbert spaces.
\end{remark}

Let us now state the concepts of strong and martingale solution for
problem \eqref{eq:0}.

\begin{defin}[Strong solution]
  A \emph{strong solution} to \eqref{eq:0} is a triple $(u,v,w)$ of
  progressively measurable processes with values in $V$, $H$, and $V^*$, respectively, 
  such that 
  \begin{align*}
  &u \in L^0(\Omega; L^2(0,T; V))\,, \\ 
  &v \in L^0(\Omega; L^2(0,T; H)\cap C^0([0,T]; V^*))\,, \\ 
  &w\in L^0(\Omega; L^2(0,T; V^*))\,,\\
  &v\in Au\,, \quad w\in Bu \qquad\text{a.e.~in } \Omega\times(0,T)\,,\\
  &v(t)+\int_0^tw(s)\,ds = v_0 +
  \int_0^t F(s,u(s))\,ds
  + \int_0^tG(s,u(s))\,dW(s) \quad\text{in }V^* \quad\forall\,t\in[0,T]\quad\P\text{-a.s.}
  \end{align*}
\end{defin}

\begin{defin}[Martingale solution] A \emph{martingale solution} to \eqref{eq:0} is a quintuplet 
  $$((\hat\Omega, \hat\cF, (\hat\cF_t)_{t\in[0,T]}, \hat\P),\hat W, \hat u, \hat v, \hat w),$$
  where $(\hat\Omega, \hat\cF, \hat\P)$ is a probability space endowed with a filtration
  $(\hat\cF_t)_{t\in[0,T]}$ which is saturated and right-continuous, $\hat W$ is
  a $(\hat\cF_t)_t$-cylindrical Wiener process on $U$, and $\hat u$, $\hat v$, and $\hat w$
  are progressively measurable processes with values in $V$, $H$, and $V^*$, respectively, such that 
  \begin{align*}
 & \hat u \in L^0(\hat\Omega; L^2(0,T; V))\,,\\
 & \hat v \in L^0(\hat\Omega; L^2(0,T; H)\cap C^0([0,T]; V^*))\,, \\ 
 & \hat w\in L^0(\hat\Omega; L^2(0,T; V^*))\,,\\
 & \hat v\in A\hat u\,, \quad \hat w\in B\hat u \qquad\text{a.e.~in } \hat\Omega\times(0,T)\,,\\
 & \hat v(t)+\int_0^t\hat w(s)\,ds = 
  \hat v(0) + 
  \int_0^t F(s,\hat u(s))\,ds+
  \int_0^tG(s,\hat u(s))\,d\hat W(s) \quad\text{in }V^* \quad\forall\,t\in[0,T]\quad\hat\P\text{-a.s.}
  \end{align*}
  and $\hat v(0)$ has the same law of $v_0$ on $V^*$.
\end{defin}

The main results of this work are the following.
\begin{thm}[Existence of martingale solutions]
  \label{th:1}
  Assume {\em{(H1)--\RRR(H9)\EEE}}. Then problem \eqref{eq:0} admits a
  martingale solution which additionally satisfies
  \begin{align*}
 & \hat u \in L^q(\hat\Omega; L^\infty(0,T; H)\cap L^2(0,T; V))\,,\\
 & \hat v \in L^q(\hat\Omega; L^\infty(0,T; H)\cap C^0([0,T]; V^*))\,,\\
 & \hat w\in L^q(\hat\Omega; L^2(0,T; V^*))\,.
  \end{align*}
\end{thm}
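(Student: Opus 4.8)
The plan is to follow the regularize–estimate–pass-to-the-limit scheme typical for stochastic doubly nonlinear problems, the only genuinely delicate point being the identification of the limit of the $B$-term, for which the generalized It\^o formula of Proposition~\ref{prop:ito} is tailor-made.

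\textbf{Regularization.} For $\eps>0$ I replace $A$ by its Yosida approximation $A^\eps$. Since $A=\partial\varphi$ is strongly monotone by (H2), $A^\eps$ is Lipschitz continuous and strongly monotone on $H$ (uniformly for $\eps\le1$), with the clean identity $(A^\eps)^{-1}=A^{-1}+\eps I=\partial(\varphi^*+\tfrac{\eps}{2}\norm{\cdot}_H^2)$. Adding an $\eps$-elliptic smoothing acting through $R$ (so as to force the regularized solution into $V_0$, in the spirit of the compatibility condition $(A^\eps x,Rx)_H\ge0$ of (H1); note that, as explained in the introduction, a linear smoothing of the noise is \emph{not} available here), one is led to a non-degenerate regularized equation of the form
\[
  d(A^\eps u_\eps) + \big(Bu_\eps + \eps R u_\eps\big)\,dt \ni F(u_\eps)\,dt + G(u_\eps)\,dW\,,\qquad (A^\eps u_\eps)(0)=v_0^\eps\,,
\]
with $v_0^\eps$ a well-prepared recovery sequence of $v_0$ (so that $v_0^\eps\to v_0$ in $H$, $\varphi^*(v_0^\eps)\to\varphi^*(v_0)$, and $(A^\eps)^{-1}v_0^\eps$ bounded in $V$). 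After the substitution $v_\eps:=A^\eps u_\eps$, this falls into the classical variational framework (Pardoux, Krylov--Rozovski\u\i) and admits a unique strong solution $u_\eps$ with the natural integrability.

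\textbf{Uniform estimates and tightness.} Applying the regularized version of the It\^o formula of Proposition~\ref{prop:ito} to $(\varphi^*+\tfrac{\eps}{2}\norm{\cdot}_H^2)(v_\eps)$ along the equation, using $\ip{Ru_\eps}{u_\eps}_V=\norm{u_\eps}_V^2$ and the coercivity of $B$ on $V$ from (H5), and then invoking the Burkholder--Davis--Gundy inequality and Gronwall's lemma, one obtains bounds for $v_\eps$ in $L^q(\Omega;L^\infty(0,T;H))$, for $\varphi^*(v_\eps)$ in $L^{q/2}(\Omega;L^\infty(0,T))$, for $u_\eps$ in $L^q(\Omega;L^2(0,T;V))$, and (by the linear bounds in (H1) and (H5)) for $A^\eps u_\eps=v_\eps$ in $L^q(\Omega;L^\infty(0,T;H))$ and for a selection $w_\eps\in Bu_\eps$ in $L^q(\Omega;L^2(0,T;V^*))$, all independent of $\eps$. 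Rewriting the equation as $v_\eps(t)=v_0^\eps-\int_0^t(w_\eps+\eps Ru_\eps)\,ds+\int_0^tF(u_\eps)\,ds+\int_0^tG(u_\eps)\,dW$ identifies a drift part bounded in $W^{1,2}(0,T;V^*)$ and a stochastic part bounded in $W^{\sigma,q}(0,T;H)$ for $\sigma<1/2$; combined with the compact chain $V_0\cembed V\cembed H\cembed V^*$ and the Aubin--Lions--Simon lemma, this yields tightness of the laws of $(u_\eps,v_\eps,w_\eps,A^\eps u_\eps,W)$ on a suitable product space, with $u_\eps$ and $v_\eps$ in the strong topology of $L^2(0,T;H)$ and weak topologies elsewhere.

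\textbf{Passage to the limit and identification of $A\hat u$.} By Prokhorov's theorem and a Skorokhod-type representation (e.g.\ Jakubowski's), there is a new stochastic basis carrying copies with the same laws, converging $\hat\P$-a.s.\ to a limit $(\hat u,\hat v,\hat w,\hat z,\hat W)$. Standard martingale arguments show that $\hat W$ is a cylindrical Wiener process for the limit filtration and that the stochastic integrals $\int_0^\cdot G(s,\hat u_\eps)\,d\hat W_\eps$ converge to $\int_0^\cdot G(s,\hat u)\,d\hat W$ (using the strong $L^2(0,T;H)$ convergence of $\hat u_\eps$ and the Lipschitz bound (H7)). Since $\hat u_\eps\to\hat u$ strongly in $L^2(0,T;H)$ and $A^\eps\hat u_\eps\wto\hat v$ in $H$, the strong–weak closure of $A$ gives $\hat v\in A\hat u$ a.e.\ in $\hat\Omega\times(0,T)$ — this is the identification of $A\hat u$ by compactness based on the linear boundedness of $A$ in $H$. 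Passing to the limit in the equation, which is \emph{linear} in the terms $dv$, $w\,dt$, $F\,dt$, $G\,dW$, yields $\hat v(t)+\int_0^t\hat w\,ds=\hat v(0)+\int_0^tF(s,\hat u)\,ds+\int_0^tG(s,\hat u)\,d\hat W$ in $V^*$, with $\hat w$ not yet identified; the uniform bounds pass to the limit by weak lower semicontinuity and provide the asserted integrability, while $C^0([0,T];V^*)$ and the fact that $\hat v(0)$ has the law of $v_0$ are read off the equation and the well-prepared data ($v_0^\eps\to v_0$ in law).

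\textbf{Identification of $B\hat u$, and the main obstacle.} It remains to prove $\hat w\in B\hat u$. Since $B$ is maximal monotone and bounded on $V$ by (H5), its realization on $L^2(\hat\Omega\times(0,T);V)$ is maximal monotone, so, by the standard monotonicity trick (the cross terms $\E\int_0^T\ip{\hat w_\eps}{\hat u}_V$ and $\E\int_0^T\ip{\hat w}{\hat u_\eps}_V$ converging by weak convergence), it suffices to prove $\limsup_\eps\E\int_0^T\ip{\hat w_\eps}{\hat u_\eps}_V\,dt\le\E\int_0^T\ip{\hat w}{\hat u}_V\,dt$. Applying the (regularized) It\^o formula to $(\varphi^*+\tfrac{\eps}{2}\norm{\cdot}_H^2)(\hat v_\eps)$ along the regularized equation, taking expectations (the stochastic integral being a genuine martingale by the $L^q$ bounds, $q>2$), using $(A^\eps)^{-1}\hat v_\eps=\hat u_\eps$ and dropping the nonnegative term $\eps\,\E\int_0^T\norm{\hat u_\eps}_V^2\,dt$, one gets
\begin{align*}
\E\int_0^T\ip{\hat w_\eps}{\hat u_\eps}_V\,dt \le{}& \E\Big(\varphi^*(v_0^\eps)+\tfrac{\eps}{2}\norm{v_0^\eps}_H^2\Big) - \E\varphi^*(\hat v_\eps(T)) + \E\int_0^T(\hat u_\eps,F(s,\hat u_\eps))_H\,dt \\
&+ \tfrac12\E\int_0^T\operatorname{Tr}\big[D((A^\eps)^{-1})(\hat v_\eps)\,G(s,\hat u_\eps)G(s,\hat u_\eps)^*\big]\,dt\,,
\end{align*}
whereas Proposition~\ref{prop:ito} applied to $\varphi^*(\hat v)$ along the limit equation (licit since $\hat v\in A\hat u$, so $\partial\varphi^*(\hat v)=A^{-1}(\hat v)=\hat u$) gives
\begin{align*}
\E\int_0^T\ip{\hat w}{\hat u}_V\,dt = {}& \E\varphi^*(v_0) - \E\varphi^*(\hat v(T)) + \E\int_0^T(\hat u,F(s,\hat u))_H\,dt \\
&+ \tfrac12\E\int_0^T\operatorname{Tr}\big[D(A^{-1})(\hat v)\,G(s,\hat u)G(s,\hat u)^*\big]\,dt\,.
\end{align*}
Now I take $\limsup_\eps$ in the first relation: $\varphi^*$ is convex and lower semicontinuous, hence weakly lower semicontinuous on $H$, so $\liminf_\eps\E\varphi^*(\hat v_\eps(T))\ge\E\varphi^*(\hat v(T))$ by Fatou (using $\hat v_\eps(T)\wto\hat v(T)$ in $H$); the initial term converges by well-preparedness of $v_0^\eps$; the $F$-term converges by the strong $L^2(0,T;H)$ convergence of $\hat u_\eps$, the Lipschitz bound (H6) and dominated convergence; and the It\^o correction term converges because $D((A^\eps)^{-1})(\hat v_\eps)=D(A^{-1})(A^\eps\hat u_\eps)+\eps I$ and, by (H4) applied with $\hat u_\eps\wto\hat u$ in $V$ and $A^\eps\hat u_\eps\wto\hat v\in A\hat u$ in $H$ (pointwise in time along a further subsequence), $D(A^{-1})(A^\eps\hat u_\eps)\to D(A^{-1})(\hat v)$ in $\cL_s(H,H)$, which together with $G(s,\hat u_\eps)\to G(s,\hat u)$ in $\cL^2(U,H)$ forces convergence of the traces, all integrations against $s$, $\hat\Omega$, and time being controlled by the uniform $L^q$-bounds. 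Comparing with the limit It\^o identity yields the required $\limsup$ inequality, hence $\hat w\in B\hat u$, which completes the proof. The two points carrying all the difficulty are precisely here: the generalized It\^o formula of Proposition~\ref{prop:ito} is outside the classical theory because $A^{-1}$ is nonlinear and is the technical heart of the paper; and the $\eps\to0$ asymptotics of the It\^o-correction term, together with the pointwise-in-time convergences needed to invoke (H4), require the careful, problem-specific analysis anticipated in the introduction.
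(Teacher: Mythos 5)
Your overall architecture (regularize, derive uniform estimates, Skorokhod compactness, identify $A\hat u$ by strong--weak closure, identify $B\hat u$ by a $\limsup$/lower-semicontinuity argument comparing the approximate It\^o identity with the one from Proposition~\ref{prop:ito}) is the paper's, and the identification steps, including the use of (H4) for the trace term, are essentially correct as sketched. The genuine gap is in the construction of the approximate solutions. You regularize only once, replacing $A$ by $A^\eps$ and adding $\eps R u_\eps\,dt$ to the \emph{drift}, and then claim that after the substitution $v_\eps=A^\eps u_\eps$ the problem ``falls into the classical variational framework (Pardoux, Krylov--Rozovski\u\i)''. It does not. The substituted drift is $v\mapsto B\big((A^\eps)^{-1}v\big)+\eps R (A^\eps)^{-1}v$ with $(A^\eps)^{-1}=A^{-1}+\eps I$ mapping $H\to H$ only; since $A^{-1}$ does not map $V^*$ (or even $H$) into $V$, the composition $B\circ(A^\eps)^{-1}$ is not defined on the relevant spaces, and even where defined it is not monotone in $v$ --- this is exactly the obstruction to the ``intuitive substitution'' stressed in the introduction, and it is not removed by the Yosida regularization of $A$, which restores coercivity on $H$ but not on $V$. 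Your regularized problem is therefore still an implicit doubly nonlinear SPDE (with $A^\eps$ Lipschitz), i.e.\ essentially the problem of \cite{SWZ18}, whose solvability is itself a nontrivial result requiring additional structure; it cannot be taken for granted.

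The paper resolves this with a two-parameter scheme that you are missing: the elliptic term is placed \emph{inside} the time-differentiated quantity, $A_\lambda^\eps:=\lambda R+A^\eps_{|V}$, so that $(A_\lambda^\eps)^{-1}:V^*\to V$ is Lipschitz (Lemma~\ref{lem:F_lam}), and $B$ is simultaneously replaced by its Yosida approximation $B_\lambda:V\to V^*$. Then $B_\lambda\circ(A_\lambda^\eps)^{-1}:V^*\to V^*$ is Lipschitz and the approximate equation for $v_\lambda^\eps=A_\lambda^\eps u_\lambda^\eps$ is a Lipschitz SDE in $V^*$, solvable by elementary means with no appeal to monotonicity. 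The price is that the limit must be taken in two successive stages, $\lambda\searrow0$ at fixed $\eps$ (where $\hat v^\eps=A^\eps(\hat u^\eps)$ and $\hat w^\eps\in B\hat u^\eps$ are identified using the It\^o formula for $(\varphi_\lambda^\eps)^*$ and Proposition~\ref{prop:ito_eps}) and then $\eps\searrow0$ (your final step), each with its own Skorokhod argument and its own set of asymptotic lemmas for $D((A_\lambda^\eps)^{-1})$ (Lemmas~\ref{lem:as_lam}--\ref{lem:as_lam'}). Without this double regularization, or some substitute construction of the $\eps$-level solutions, your proof does not get off the ground.
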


\begin{thm}[Existence and uniqueness of strong solutions]
  \label{th:2}
  Assume {\em{(H1)--\RRR (H9)\EEE}} and that the initial datum
  $v_0\in H$ is nonrandom. 
  If either $A$ or $B$ is
  linear, continuous, and symmetric,
  then problem \eqref{eq:0} admits a unique strong solution $(u,v,w)$ with 
  \begin{align*}
   &u \in L^q(\Omega; L^\infty(0,T; H)\cap L^2(0,T; V))\,,\\
  & v \in L^q(\Omega; L^\infty(0,T; H)\cap C^0([0,T]; V^*))\,,\\
  & w\in L^q(\Omega; L^2(0,T; V^*))\,.
  \end{align*}
\end{thm}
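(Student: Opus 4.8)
The plan is to deduce Theorem~\ref{th:2} from Theorem~\ref{th:1} by a Yamada--Watanabe-type argument: one first establishes \emph{pathwise uniqueness} for \eqref{eq:0}, and then combines the weak existence of Theorem~\ref{th:1} with pathwise uniqueness to produce a probabilistically strong solution (equivalently, the Gy\"ongy--Krylov characterization applies). Since $v_0$ is nonrandom this passage is standard, and the strong solution so obtained carries the law of the martingale solution of Theorem~\ref{th:1}, hence the stated integrability; uniqueness of strong solutions is then exactly pathwise uniqueness. So the whole proof reduces to the following: if $(u_i,v_i,w_i)$, $i=1,2$, are two strong solutions on the same stochastic basis, driven by the same $W$ and with the same $v_0$, then they coincide. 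We would set $u:=u_1-u_2$, $v:=v_1-v_2$, $w:=w_1-w_2$, so that $v(0)=0$ and, in $V^*$ for all $t\in[0,T]$,
\[
  v(t)+\int_0^t w(s)\,ds = M(t):=\int_0^t\big(F(s,u_1)-F(s,u_2)\big)\,ds+\int_0^t\big(G(s,u_1)-G(s,u_2)\big)\,dW(s),
\]
with $M\in C^0([0,T];H)$ $\P$-a.s., and we would use throughout that $(v,u)_H\ge c_A\norm{u}_H^2$ by (H2), that $\ip{w}{u}_V\ge0$ by monotonicity of $B$, and the Lipschitz bounds (H6)--(H7).

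\emph{Case 1: $A$ linear, continuous, symmetric.} Here $A:H\to H$ is an isomorphism, $\varphi^*(y)=\tfrac12(A^{-1}y,y)_H$, $D(A^{-1})\equiv A^{-1}$, and (H3)--(H4) hold trivially (with $Y=H$); moreover $v=Au$ with $u=A^{-1}v$, so Proposition~\ref{prop:ito} applies to the semimartingale $v$. It would give, with $\Sigma:=G(\cdot,u_1)-G(\cdot,u_2)$ and $\varphi^*(v(0))=0$,
\[
  \varphi^*(v(t))+\int_0^t\ip{w}{u}_V\,ds
  =\int_0^t\big(F(s,u_1)-F(s,u_2),u\big)_H\,ds+\tfrac12\int_0^t\operatorname{Tr}\big[A^{-1}\Sigma(s)\Sigma(s)^*\big]\,ds+\int_0^t\big(u,\Sigma(s)\,dW(s)\big)_H.
\]
We would drop the nonnegative term $\int_0^t\ip{w}{u}_V\,ds$, bound the first integral by $L_F\int_0^t\norm{u}_H^2$, the trace integral by $\tfrac12\norm{A^{-1}}_{\cL(H)}L_G^2\int_0^t\norm{u}_H^2$, use $\norm{u}_H^2\le\tfrac{2}{c_A}\varphi^*(v)$ (strong monotonicity of $A$), then localize to remove the stochastic integral and take expectations, reaching $\E\varphi^*(v(t))\le C\int_0^t\E\varphi^*(v(s))\,ds$. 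Gronwall's lemma then forces $\varphi^*(v)\equiv0$, hence $u=A^{-1}v\equiv0$, then $v=Au\equiv0$, and finally $w\equiv0$ from the equation.

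\emph{Case 2: $B$ linear, continuous, symmetric.} Here $B:V\to V^*$ is linear, bounded, symmetric and coercive, $w_i=Bu_i$, and $\int_0^t w_i\,ds=B\int_0^t u_i\,ds=BU_i(t)$ with $U_i(t):=\int_0^t u_i(s)\,ds\in V$. Subtracting the integrated identities and setting $U:=U_1-U_2\in W^{1,2}(0,T;V)$ (so $U'=u$, $U(0)=0$) would give $v(t)+BU(t)=M(t)$ in $V^*$. Since $\psi:=\tfrac12\ip{B\,\cdot}{\cdot}_V$ is a continuous quadratic form on $V$, the map $t\mapsto\psi(U(t))$ is absolutely continuous with $\tfrac{d}{dt}\psi(U(t))=\ip{BU(t)}{u(t)}_V$ a.e.\ (chain rule via symmetry of $B$, with \emph{no} It\^o correction since $U$ is absolutely continuous). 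Writing $BU(t)=M(t)-v(t)$ and using the Gelfand triple (with $M(t),v(t)\in H$ a.e.), one gets
\[
  \tfrac{d}{dt}\psi(U(t))=(M(t),u(t))_H-(v(t),u(t))_H\le-\tfrac{c_A}{2}\norm{u(t)}_H^2+\tfrac{1}{2c_A}\norm{M(t)}_H^2\qquad\text{a.e.}
\]
by (H2) and Young's inequality. Integrating, dropping $\psi(U(t))\ge0$, localizing, taking expectations, and estimating $\E\norm{M(s)}_H^2\le C\int_0^s\E\norm{u(r)}_H^2\,dr$ via (H6)--(H7), It\^o's isometry and Fubini, we would arrive at $g(t):=\E\int_0^t\norm{u}_H^2\,ds\le C\int_0^t g(s)\,ds$, so that $g\equiv0$ by Gronwall; thus $u\equiv0$, whence $v\equiv0$ and $w=Bu\equiv0$.

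The main point requiring care will be, in Case~1, to make sure that the generalized It\^o formula of Proposition~\ref{prop:ito} is applicable to the \emph{difference} $v=v_1-v_2$ of two solutions, not only to a single solution; this is unproblematic, since the linearity of $A$ trivializes (H3)--(H4) and makes $\varphi^*$ a smooth quadratic functional, and the difference lies in the same solution class. The remaining work is routine: the localizations needed to turn the stochastic integral into a genuine martingale (Case~1) and to ensure finite expectations (both cases), and the function-space bookkeeping in Case~2 (that $v(t)+BU(t)=M(t)$ holds in $V^*$ with $M,v\in H$ a.e., making the chain-rule identity for $\psi(U(t))$ and the pairing $(v,u)_H$ legitimate). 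None of this is a serious obstacle --- the substantive analysis of the paper is all in Theorem~\ref{th:1} and Proposition~\ref{prop:ito}.
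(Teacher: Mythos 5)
Your proposal is correct and follows essentially the same route as the paper: pathwise uniqueness is proved by the same two energy arguments (It\^o's formula for the $A$-induced quadratic functional of the difference when $A$ is linear symmetric, and time-integration of the equation tested against $u_1-u_2$ exploiting $\tfrac{d}{dt}\tfrac12\ip{BU}{U}_V=\ip{BU}{u}_V$ when $B$ is linear symmetric), and strong existence is then deduced from the martingale existence of Theorem~\ref{th:1} via the Gy\"ongy--Krylov lemma, exactly as in the paper. The only cosmetic difference is that in Case~1 you localize and take plain expectations where the paper controls the supremum via Burkholder--Davis--Gundy; both close the Gronwall argument.
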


\begin{remark}\rm
  Note that the conditions on $A$ and $B$ ensuring uniqueness are sharp,
  in the sense that they cannot be weakened, even for $H=\erre$:
  see \cite{diben-show} for details.
\end{remark}


\section{Preliminary results}
\label{sec:prelim}

We collect in this section some auxiliary results that will be used 
throughout the work.

Recall that, for all $\eps\in(0,1)$, $A^\eps$ is the
$\eps$-Yosida approximation of $A$, namely, $A^\eps:= (I-(I +\eps
A)^{-1})/\eps$ where $I$ is the identity on $H$. We can
equivalently rewrite $A^\eps$ as 
\[
  A^\eps:=(\eps I + A^{-1})^{-1}:H\to H\,.
\]
Note that the operator $\eps I+A^{-1}:H\to H$ is maximal monotone, 
Lipschitz-continuous, and coercive on $H$, hence invertible. In order
to prove that the two definitions coincide, fix $x\in H$, and define
$y^\eps:=(\eps I + A^{-1})^{-1}(x)$ so that
$\eps y^\eps + A^{-1}(y^\eps)=x$. Setting also 
$x^\eps:=A^{-1}(y^\eps)$ we infer that 
$x^\eps + \eps A(x^\eps)\ni x$, so that $x^\eps=J_A^\eps(x)$,
where $J_A^\eps:=(I+\eps A)^{-1}:H\to H$ is the resolvent of~$A$.
Eventually, we deduce that $y^\eps=(x-x^\eps)/\eps$, as desired.

We collect some useful properties of $A^\eps$ in the following lemma.
\begin{lem}[Properties of $A^\eps$]
  \label{lem:A_eps}
  Let $\eps\in(0,c_A^{-1})$. Then the following properties hold:
  \begin{description}
  \item[(P1)] for every $x_1,x_2,x\in H$ it holds
  \begin{align*}
  &\left(A^\eps(x_1)-A^{\eps}(x_2), x_1-x_2\right)_H\geq \frac{c_A}2\norm{x_1-x_2}_H^2\,,\\
  &\norm{A^\eps(x_1)-A^\eps(x_2)}_H\leq\frac1\eps\norm{x_1-x_2}_H\,,\\
  &\norm{A^\eps(x)}_H\leq C_A\left(1+2\norm{x}_H\right)\,.
  \end{align*}
  \item[(P2)] there exists a convex function $\varphi^\eps:H\to[0,+\infty)$ with $\varphi^\eps\in C^1(H)$,
  $D\varphi^\eps=A^\eps$ and
  \[
  (\varphi^\eps)^*(y)=\frac\eps2\norm{y}_H^2 + \varphi^*(y) \qquad\forall\,y\in H\,.
  \]
  \item[(P3)] $A^\eps$ is G\^ateaux-differentiable and
  $D(A^\eps)\in C^0(H; \cL(V,H))$. 
  \end{description}
\end{lem}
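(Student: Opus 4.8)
\emph{Proof strategy.} Throughout I use the representation $A^\eps=(\eps I+A^{-1})^{-1}$ and the decomposition $x=J_A^\eps x+\eps A^\eps x$ with $A^\eps x\in A(J_A^\eps x)$, both recorded above. For \textbf{(P1)}, these are the usual Yosida estimates adapted to the constants in {(H1)--(H2)}. The Lipschitz bound $1/\eps$ is immediate since $\eps I+A^{-1}$ is strongly monotone of modulus $\eps$ ($A^{-1}$ being monotone), hence its inverse $A^\eps$ is $1/\eps$-Lipschitz. For the strong monotonicity, put $z_i:=A^\eps(x_i)$, $w_i:=J_A^\eps(x_i)=A^{-1}(z_i)$, so that $x_i=w_i+\eps z_i$; then $(z_1-z_2,x_1-x_2)_H=(z_1-z_2,w_1-w_2)_H+\eps\norm{z_1-z_2}_H^2$, {(H2)} gives $(z_1-z_2,w_1-w_2)_H\geq c_A\norm{w_1-w_2}_H^2$, and the elementary bound $\norm{w_1-w_2}_H^2\geq\tfrac12\norm{x_1-x_2}_H^2-\eps^2\norm{z_1-z_2}_H^2$ holds; combining them and using $\eps(1-c_A\eps)\geq 0$ (valid since $\eps<c_A^{-1}$) leaves exactly $\tfrac{c_A}{2}\norm{x_1-x_2}_H^2$. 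Finally $A^\eps x\in A(J_A^\eps x)$ and {(H1)} yield $\norm{A^\eps x}_H\leq C_A(1+\norm{J_A^\eps x}_H)$, while $J_A^\eps$ is nonexpansive with $J_A^\eps 0=0$ (because $A(0)\ni 0$), so $\norm{J_A^\eps x}_H\leq\norm{x}_H$ and the asserted (indeed slightly stronger) bound follows. For \textbf{(P2)} I take $\varphi^\eps$ to be the Moreau--Yosida regularization $\varphi^\eps(x):=\min_{y\in H}\bigl(\tfrac1{2\eps}\norm{x-y}_H^2+\varphi(y)\bigr)$, which is well defined and finite for every $x$ since $\varphi$ is proper, convex, lower semicontinuous and bounded below; convexity, nonnegativity, and $\varphi^\eps\in C^1(H)$ with $D\varphi^\eps=A^\eps$ are classical (see \cite{brezis}), and the conjugate formula follows because $\varphi^\eps$ is the infimal convolution of $\varphi$ with $\tfrac1{2\eps}\norm{\cdot}_H^2$, so $(\varphi^\eps)^*=\varphi^*+\bigl(\tfrac1{2\eps}\norm{\cdot}_H^2\bigr)^*=\varphi^*+\tfrac\eps2\norm{\cdot}_H^2$ (equivalently, exchange suprema in $(\varphi^\eps)^*(y)=\sup_{x,w}\bigl((x,y)_H-\tfrac1{2\eps}\norm{x-w}_H^2-\varphi(w)\bigr)$ and evaluate the inner supremum).

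The substantial point is \textbf{(P3)}, where the difficulty is that {(H3)} only grants \emph{G\^ateaux}- (not Fr\'echet-) differentiability of $A^{-1}$, so neither the inverse- nor the implicit-function theorem is available; I would therefore differentiate the inverse by hand. Since $A^{-1}$ is monotone, $D(A^{-1})(y)$ is positive semidefinite for every $y\in H$, hence $T_\eps:=\eps I+A^{-1}$ is G\^ateaux-differentiable with $DT_\eps(y)=\eps I+D(A^{-1})(y)$ bounded and strongly monotone of modulus $\eps$, thus an isomorphism with $\norm{DT_\eps(y)^{-1}}_{\cL(H,H)}\leq 1/\eps$. Fix $x\in H$, $h\in H$, set $\delta_t:=A^\eps(x+th)-A^\eps(x)$; by (P1), $\norm{\delta_t}_H\leq\eps^{-1}t\norm{h}_H\to 0$. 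The fundamental theorem of calculus along the segment $[A^\eps x,\,A^\eps x+\delta_t]$ — legitimate because, by {(H3)}, $\sigma\mapsto DT_\eps(A^\eps x+\sigma\delta_t)w$ is bounded and weakly continuous, hence Bochner-integrable, for each $w$ — yields
\[
  th=T_\eps\bigl(A^\eps(x+th)\bigr)-T_\eps\bigl(A^\eps x\bigr)=M_t\,\delta_t\,,\qquad M_t:=\int_0^1 DT_\eps(A^\eps x+\sigma\delta_t)\,d\sigma\,,
\]
with $M_t$ again bounded and strongly monotone of modulus $\eps$, so $\delta_t/t=M_t^{-1}h$ stays bounded in $H$ by $\eps^{-1}\norm{h}_H$. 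Letting $t\to 0$: since $\delta_t\to 0$ in $H$, continuity of $D(A^{-1})$ into $\cL_w(H,H)\cap\cL(Y,H)$ forces $M_t\to DT_\eps(A^\eps x)$ both in $\cL_w(H,H)$ and, crucially, in $\cL(Y,H)$; combining this with the uniform bound $\norm{M_t^{-1}}_{\cL(H,H)}\leq 1/\eps$, the monotonicity of $M_t$ (which gives weak lower semicontinuity of $w\mapsto(M_t w,w)_H$), and the fact that $M_t^{-1}\in\cL(V,Y)$ coming from {(H3)} (so that the difference quotients are bounded in $Y$ when $h\in V$), one first passes to the limit weakly in $H$ to identify $[\eps I+D(A^{-1})(A^\eps x)]^{-1}h$, then upgrades to strong convergence by a norm argument; for general $h\in H$ one concludes by density of $V$ in $H$ and the $\eps^{-1}$-Lipschitz bound. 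This proves $A^\eps$ is G\^ateaux-differentiable with $D(A^\eps)(x)=[\eps I+D(A^{-1})(A^\eps x)]^{-1}$.

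Finally, $D(A^\eps)\in C^0(H;\cL(V,H))$ follows from the resolvent identity
\[
  D(A^\eps)(x_n)-D(A^\eps)(x)=D(A^\eps)(x_n)\bigl[D(A^{-1})(A^\eps x)-D(A^{-1})(A^\eps x_n)\bigr]D(A^\eps)(x)\,.
\]
Indeed, if $x_n\to x$ in $H$, the rightmost factor maps $V$ boundedly into $Y$ by {(H3)}, the middle factor tends to $0$ in $\cL(Y,H)$ (since $A^\eps x_n\to A^\eps x$ in $H$ and $D(A^{-1})$ is continuous into $\cL(Y,H)$), and the leftmost factor is bounded by $1/\eps$ in $\cL(H,H)$, so the product tends to $0$ in $\cL(V,H)$. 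The step I expect to be the main obstacle is the passage to the limit $t\to 0$ in $\delta_t/t=M_t^{-1}h$: a priori only \emph{weak} operator convergence of $M_t$ is available, so one must exploit simultaneously the extra $\cL(Y,H)$-continuity of $D(A^{-1})$, the $\cL(V,Y)$-mapping property of the (averaged) resolvents, and the monotonicity of $M_t$ to both obtain convergence of the difference quotients and identify the limit; everything else is routine.
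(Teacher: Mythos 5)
Your proof is correct and, for (P1), (P2), and the continuity statement in (P3), follows essentially the same route as the paper: the same decomposition $x_i=A^{-1}(z_i)+\eps z_i$ combined with (H2) and $\eps c_A\le 1$ for the strong monotonicity, the Moreau--Yosida regularization and the infimal-convolution conjugacy for (P2), and for the continuity of $D(A^\eps)$ your resolvent identity is just a compact repackaging of the paper's subtract-and-test computation, which likewise lands on the bound $\norm{h_n-h}_H\le \eps^{-1}\norm{D(A^{-1})(A^\eps x)-D(A^{-1})(A^\eps x_n)}_{\cL(Y,H)}\norm{D(A^\eps)(x)}_{\cL(V,Y)}\norm{z}_V$ via exactly the same three ingredients. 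The one place where you genuinely depart from (and improve on) the paper is the G\^ateaux differentiability of $A^\eps$ itself: the paper merely asserts that invertibility of $\eps I+D(A^{-1})(y)$ yields $D(A^\eps)(x)=(\eps I+D(A^{-1})(A^\eps x))^{-1}$, whereas you supply an actual proof via the averaged operators $M_t$ and a fundamental-theorem-of-calculus representation of $T_\eps(A^\eps(x+th))-T_\eps(A^\eps x)$. This addition is welcome, since no inverse function theorem is available in the merely G\^ateaux-differentiable setting, and your limit argument --- writing $M_t(k_t-k)=(M-M_t)k$, testing with $k_t-k$ to get $\norm{k_t-k}_H\le\eps^{-1}\norm{(M-M_t)k}_H$, using $k\in Y$ for $h\in V$ together with $\cL(Y,H)$-convergence of $M_t$, and then a density argument for general $h\in H$ --- closes the step correctly. (Both you and the paper read (H3) as providing the $\cL(V,Y)$-mapping property for the whole family $(\eps I+D(A^{-1})(A^\eps(\cdot)))^{-1}$ rather than only for $\eps=1$; that reading is the paper's own, so it is not a gap attributable to you.)
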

\begin{proof}
  Ad {{(P1)}.}
  Since $A^{-1}:H\to H$ is monotone and Lipschitz continuous, the operator
  $\eps I+A^{-1}$ is monotone, coercive, and Lipschitz-continuous. Hence, $A^\eps:H\to H$
  is well-defined, monotone, and Lipschitz-continuous as well. Moreover, 
  for every $x_1,x_2\in H$, setting $y_1:=A^{\eps}(x_1)$ and $y_2:=A^\eps(x_2)$, 
  by strong monotonicity of $A$ we have
  \begin{align*}
  &\left(A^\eps(x_1)-A^{\eps}(x_2),x_1-x_2\right)_H\\
  &\quad=\left(y_1-y_2, \eps(y_1-y_2)+A^{-1}(y_1)-A^{-1}(y_2)\right)_H\\
  &\quad= \eps\norm{y_1-y_2}_H^2 + \left(y_1-y_2,A^{-1}(y_1)-A^{-1}(y_2)\right)_H\\
  &\quad\geq \eps\norm{y_1-y_2}_H^2 + c_A\norm{A^{-1}(y_1)-A^{-1}(y_2)}_H^2\\
  &\quad= \eps\norm{y_1-y_2}_H^2 + c_A\norm{x_1-x_2-\eps(y_1-y_2)}_H^2\,.
  \end{align*}
  Noting that $\eps c_A\leq 1$ by assumption, we also have
  \begin{align*}
  &\frac{c_A}2\norm{x_1-x_2}_H^2\leq c_A\norm{x_1-x_2-\eps(y_1-y_2)}_H^2+c_A\eps^2\norm{y_1-y_2}_H^2\\
  &\quad\leq c_A\norm{x_1-x_2-\eps(y_1-y_2)}_H^2+\eps\norm{y_1-y_2}_H^2\,,
  \end{align*}
  so that the strong monotonicity of $A^\eps$ follows. The
  Lipschitz-continuity of $A^\eps$ is well-known (see e.g.~\cite{brezis}).   
  Finally, if $x\in H$ and $y:=A^\eps(x)$, we have
  $\eps y+ A^{-1}(y)=x$, so that $y \in A(x-\eps y)$: hence, by (H1) 
  and the already proved Lipschitz-continuity we have
  $\norm{y}_H\leq\frac1\eps\norm{x}_H$, yielding
  \[
  \norm{y}_H\leq C_A\left(1+\norm{x-\eps y}_H\right)\leq C_A(1+\norm{x}_H+\eps\norm{y}_H)
  \leq C_A\left(1 + 2\norm{x}_H\right)\,,
  \]
  from which the linear growth condition of $A^\eps$ follows, uniformly in $\eps$.
  
  Ad {{(P2)}.} It is well-known that, by 
  defining $\varphi^\eps$
  as the Moreau-Yosida regularization of $\varphi$, namely 
 $$\varphi^\eps(x) := \inf_{v\in H} \left(\frac{\norm{x-v}_H^2}{2\eps} + \varphi(v)\right)\,,$$
 we have 
  $\partial \varphi^\eps=A^\eps$ and $\partial(\varphi^\eps)^*=(A^\eps)^{-1}$.
  Since $A^\eps$ is Lipschitz-continuous, we deduce that actually
  $\varphi^\eps\in C^1(H)$ and $D\varphi^\eps=A^\eps$, as required.

  Ad  {{(P3)}.} Let us show that $A^\eps$ is G\^ateaux differentiable.
  By {(H3)}, $(A^\eps)^{-1}=\eps I + A^{-1}$ is G\^ateaux differentiable
  and, for all $y\in H$, $D((A^{\eps})^{-1})(y)=\eps I +D(A^{-1})(y)$ is a linear isomorphism of $H$.
  Hence, this implies that $A^\eps$ is G\^ateaux-differentiable and its differential is given by
  \[
    D(A^\eps)(x)=\left(\eps I + D(A^{-1})(A^\eps(x))\right)^{-1}\in \cL(H,H)\,, \quad x\in H\,.
  \]
  Let us show that $D(A^\eps)\in C^0(H;\cL(V,H))$.
  Let $(x_n)_n\subset H$, $x\in H$ with 
  $x_n\to x$ in $H$, and $z\in V$ be arbitrary. Setting
  \[
  h_n:=D(A^{\eps})(x_n) z\,, \qquad h:=D(A^\eps)(x)z\,,
  \]
  we have that
  \[
  z=\eps h_n + D(A^{-1})(A^\eps(x_n))h_n\,, \qquad
  z=\eps h + D(A^{-1})(A^\eps(x))h\,,
  \]
  from which 
  \[
  \eps(h_n-h) + D(A^{-1})(A^\eps(x_n))(h_n-h) = \left(D(A^{-1})(A^\eps(x))-D(A^{-1})(A^\eps(x_n))\right)h\,.
  \]
  Hence, testing by $h_n-h$, using the monotonicity of $A^{-1}$ we have
  \begin{align*}
  &\eps\norm{h_n-h}_H^2\leq\left(\left(D(A^{-1})(A^\eps(x))-D(A^{-1})(A^\eps(x_n))\right)h,h_n-h\right)_H\\
  &\quad\leq\frac\eps2\norm{h_n-h}_H^2 
  + \frac1{2\eps}\norm{\left(D(A^{-1})(A^\eps(x))-D(A^{-1})(A^\eps(x_n))\right)h}_H^2\,,
  \end{align*}
  yielding
  \[
  \norm{h_n-h}_H\leq\frac1\eps\norm{\left(D(A^{-1})(A^\eps(x))-D(A^{-1})(A^\eps(x_n))\right)h}_H\,.
  \]
  Now, recalling {(H3)}, we have that $D(A^{\eps})(x)=\left(\eps I + D(A^{-1})(A^\eps(x))\right)^{-1}\in\cL(V,Y)$:
  hence, since $z\in V$, we infer that  
  \[
  h=D(A^\eps)(x)z \in Y \quad\text{and}\quad
  \norm{h}_Y\leq \norm{D(A^{\eps})(x)}_{\cL(V,Y)}\norm{z}_V\,.
  \]
  We deduce that 
  \[
  \norm{h_n-h}_H\leq \frac1\eps \norm{DA^{\eps}(x)}_{\cL(V,Y)}
  \norm{\left(D(A^{-1})(A^\eps(x))-D(A^{-1})(A^\eps(x_n))\right)}_{\cL(Y,H)}\norm{z}_V\,,
  \]
  hence also, from the arbitrariness of $z\in V$,
  \begin{align*}
  &\norm{D(A^\eps)(x_n)-D(A^\eps)(x)}_{\cL(V,H)}\\
  &\quad\leq \frac1\eps\norm{D(A^{\eps})(x)}_{\cL(V,Y)}
  \norm{\left(D(A^{-1})(A^\eps(x))-D(A^{-1})(A^\eps(x_n))\right)}_{\cL(Y,H)}\,.
  \end{align*}
  By the Lipschitz-continuity of $A^\eps$ we have that $A^\eps(x_n)\to A^\eps(x)$ in $H$ as $n\to\infty$, hence
  the right-hand side converges to $0$ as $n\to\infty$ again by {(H3)}.
\end{proof}

For every $\eps,\lambda>0$, 
we define the operator 
\[
  A_\lambda^\eps:= \lambda R + A^\eps_{|V}: V\to V^*\,.
\]
Since $A_\lambda^\eps$ is maximal monotone and coercive, its inverse
$(A_\lambda^\eps)^{-1}:V^*\to V$ is well-defined and Lipschitz-continuous.
We prove some properties of $A_\lambda^\eps$ in the following lemma.

\begin{lem}[Properties of $A_\lambda^\eps$]
  \label{lem:F_lam}
  Let $\eps,\lambda>0$ and define
  \[
  \varphi^\eps_\lambda:V\to[0,+\infty)\,, \qquad 
  \varphi^\eps_\lambda(x):=\frac\lambda2\norm{x}_V^2 + \varphi^\eps(x)\,, \quad x\in V\,.
  \]
  Then the convex conjugate $(\varphi_\lambda^\eps)^*$ of $\varphi^\eps_\lambda$, defined as
  \[
  (\varphi^\eps_\lambda)^*:V^*\to[0,+\infty)\,, \qquad 
  (\varphi^\eps_\lambda)^*(y):=\sup_{x\in V}\left\{\ip{y}{x}_V - \varphi^\eps_\lambda(x)\right\}\,, \quad y\in V^*\,,
  \]
  satisfies $(\varphi^\eps_\lambda)^* \in C^2(V^*)$, $D(\varphi_\lambda^\eps)^*=(A_\lambda^\eps)^{-1}$,
  and $D(\varphi^\eps_\lambda)^*$ and $D^2(\varphi^\eps_\lambda)^*$
  are locally bounded in $V^*$. Moreover, 
  $(A_\lambda^\eps)^{-1}:H\to V_0$ is G\^ateaux-differentiable and
  the following characterization holds:
  for every $y\in H$, setting $x_\lambda^\eps=(A_\lambda^\eps)^{-1}(y)$, we have
  \[
  D((A_\lambda^\eps)^{-1})(y)=
  \left[I + \lambda D((A^\eps)^{-1})(A^\eps(x_\lambda^\eps))\circ R\right]^{-1}\circ
  D((A^\eps)^{-1})(A^\eps(x_\lambda^\eps)) \in \cL(H,V_0)\,.
  \]
\end{lem}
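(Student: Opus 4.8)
The plan is to derive the statements about $(\varphi_\lambda^\eps)^*$ from convex duality and the inverse function theorem on $V^*$, and then to refine the differentiability of $(A_\lambda^\eps)^{-1}$ from its natural target $V$ to the smaller space $V_0$ by a direct computation of difference quotients, the explicit formula for the differential following by an algebraic manipulation. For the first part, note that $\varphi_\lambda^\eps$ is convex, continuous and coercive on $V$, so $(\varphi_\lambda^\eps)^*$ is finite, convex and continuous on $V^*$ with $\partial(\varphi_\lambda^\eps)^*=(\partial\varphi_\lambda^\eps)^{-1}$. By Lemma~\ref{lem:A_eps}(P2), $\varphi^\eps\in C^1(H)$ with $D\varphi^\eps=A^\eps$, hence $\varphi^\eps_{|V}\in C^1(V)$ with G\^ateaux derivative $A^\eps_{|V}$; since $\tfrac\lambda2\norm{\cdot}_V^2$ is everywhere continuous, the subdifferential sum rule gives $\partial\varphi_\lambda^\eps=\lambda R+A^\eps_{|V}=A_\lambda^\eps$, which is surjective by coercivity, so $\partial(\varphi_\lambda^\eps)^*=(A_\lambda^\eps)^{-1}$ is single-valued and $D(\varphi_\lambda^\eps)^*=(A_\lambda^\eps)^{-1}$. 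By (P3) and the continuous embeddings $V\embed H$, $\cL(V,H)\embed\cL(V,V^*)$, the map $x\mapsto D(A^\eps)(x)$ is continuous from $V$ into $\cL(V,V^*)$, so $A_\lambda^\eps\in C^1(V;V^*)$ with $D(A_\lambda^\eps)(x)=\lambda R+D(A^\eps)(x)$; the monotonicity of $D(A^\eps)(x)$ (derivative of the monotone $A^\eps$) gives $\ip{D(A_\lambda^\eps)(x)h}{h}_V\geq\lambda\norm{h}_V^2$, so $D(A_\lambda^\eps)(x):V\to V^*$ is an isomorphism with $\norm{D(A_\lambda^\eps)(x)^{-1}}_{\cL(V^*,V)}\leq\lambda^{-1}$. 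Since $A_\lambda^\eps$ is a homeomorphism of $V$ onto $V^*$, the inverse function theorem yields $(A_\lambda^\eps)^{-1}\in C^1(V^*;V)$ with $D((A_\lambda^\eps)^{-1})(y)=D(A_\lambda^\eps)(x_\lambda^\eps)^{-1}$, $x_\lambda^\eps=(A_\lambda^\eps)^{-1}(y)$; hence $(\varphi_\lambda^\eps)^*\in C^2(V^*)$, $D(\varphi_\lambda^\eps)^*$ is locally bounded (being Lipschitz), and $\norm{D^2(\varphi_\lambda^\eps)^*(y)}_{\cL(V^*,V)}\leq\lambda^{-1}$.

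For the differentiability into $V_0$, first observe that if $y\in H$ and $x_\lambda^\eps=(A_\lambda^\eps)^{-1}(y)$ then $\lambda Rx_\lambda^\eps=y-A^\eps(x_\lambda^\eps)\in H$, so $x_\lambda^\eps\in V_0$ (and in fact $(A_\lambda^\eps)^{-1}:H\to V_0$ is Lipschitz, using (P1)). Fix $y,k\in H$, set $x_t:=(A_\lambda^\eps)^{-1}(y+tk)$ and $h:=D(A_\lambda^\eps)(x_\lambda^\eps)^{-1}k$; arguing as above $h\in V_0$, and by the $C^1$-regularity on $V^*$ we have $t^{-1}(x_t-x_\lambda^\eps)\to h$ in $V$, hence in $H$, as $t\to0$. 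Subtracting the identities $\lambda Rx_t+A^\eps(x_t)=y+tk$ and $\lambda Rx_\lambda^\eps+A^\eps(x_\lambda^\eps)=y$ gives, in $H$,
\[
  \lambda\,t^{-1}\bigl(Rx_t-Rx_\lambda^\eps\bigr)=k-t^{-1}\bigl(A^\eps(x_t)-A^\eps(x_\lambda^\eps)\bigr).
\]
Writing $k_t:=t^{-1}(x_t-x_\lambda^\eps)\to h$ in $H$ and splitting $t^{-1}(A^\eps(x_t)-A^\eps(x_\lambda^\eps))=t^{-1}(A^\eps(x_\lambda^\eps+tk_t)-A^\eps(x_\lambda^\eps+th))+t^{-1}(A^\eps(x_\lambda^\eps+th)-A^\eps(x_\lambda^\eps))$, the $\eps^{-1}$-Lipschitz bound (P1) kills the first term and the G\^ateaux-differentiability of $A^\eps$ sends the second to $D(A^\eps)(x_\lambda^\eps)h$, all in $H$; therefore $t^{-1}(Rx_t-Rx_\lambda^\eps)\to\lambda^{-1}(k-D(A^\eps)(x_\lambda^\eps)h)=Rh$ in $H$. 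Combined with the convergence in $V$, this gives $t^{-1}(x_t-x_\lambda^\eps)\to h$ in $V_0$, so $(A_\lambda^\eps)^{-1}:H\to V_0$ is G\^ateaux-differentiable with $D((A_\lambda^\eps)^{-1})(y)=\bigl(\lambda R+D(A^\eps)(x_\lambda^\eps)\bigr)^{-1}_{|H}\in\cL(H,V_0)$.

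For the explicit formula, set $S:=D((A^\eps)^{-1})(A^\eps(x_\lambda^\eps))=\eps I+D(A^{-1})(A^\eps(x_\lambda^\eps))$, which by (P3) is a linear isomorphism of $H$ with $S^{-1}=D(A^\eps)(x_\lambda^\eps)\in\cL(H,H)$. For $k\in H$ and $h:=D((A_\lambda^\eps)^{-1})(y)k\in V_0$ one has $(\lambda R+S^{-1})h=k$ in $H$; applying $S$ gives $(I+\lambda SR)h=Sk$. Now $I+\lambda SR:V_0\to H$ is bounded (as $S\in\cL(H,H)$ and $R\in\cL(V_0,H)$) and bijective: for surjectivity, given $g\in H$ solve $(\lambda R+S^{-1})\tilde h=S^{-1}g$ for $\tilde h\in V_0$, which is possible since $\lambda R+S^{-1}=D(A_\lambda^\eps)(x_\lambda^\eps):V\to V^*$ is an isomorphism mapping $H$ into $V_0$, and then $(I+\lambda SR)\tilde h=g$; for injectivity, $(I+\lambda SR)h=0$ with $h\in V_0$ implies $(\lambda R+S^{-1})h=0$ after applying $S^{-1}$, hence $h=0$. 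By the open mapping theorem $(I+\lambda SR)^{-1}\in\cL(H,V_0)$, and therefore $h=(I+\lambda SR)^{-1}Sk$, i.e.
\[
  D((A_\lambda^\eps)^{-1})(y)=\bigl[I+\lambda D((A^\eps)^{-1})(A^\eps(x_\lambda^\eps))\circ R\bigr]^{-1}\circ D((A^\eps)^{-1})(A^\eps(x_\lambda^\eps)),
\]
as claimed.

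The main obstacle is the passage to $V_0$ in the second paragraph: the difference quotients of $(A_\lambda^\eps)^{-1}$ converge a priori only in $V$, and promoting this to convergence in $V_0$ forces one to control the increments of $A^\eps$ in the $H$-norm along a moving base point, which is precisely where the uniform Lipschitz estimate of Lemma~\ref{lem:A_eps}(P1) enters; once this is in place, the algebraic identification of the differential is routine, the only additional point being the invertibility of $I+\lambda SR$ from $V_0$ onto $H$.
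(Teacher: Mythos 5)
Your proof is correct. The first half (convex duality, the sum rule $\partial\varphi^\eps_\lambda=\lambda R+A^\eps_{|V}$, and the $C^1$ inverse function theorem on $V^*$ to get $(\varphi^\eps_\lambda)^*\in C^2(V^*)$ with globally bounded first and second derivatives) follows essentially the same route as the paper, the only cosmetic difference being that the paper verifies the continuity of $y\mapsto D(\lambda R+A^\eps_{|V})^{-1}(y)$ by hand rather than quoting the $C^1$ version of the inverse function theorem. For the second half you genuinely diverge: the paper promotes the differentiability from $V$ to $V_0$ by rewriting $(A_\lambda^\eps)^{-1}(y)=\tfrac1\lambda R^{-1}\bigl(y-A^\eps((A_\lambda^\eps)^{-1}(y))\bigr)$ and invoking the chain rule for the composition $R^{-1}\circ(I-A^\eps_{|V}\circ(A_\lambda^\eps)^{-1})$, which even yields Fr\'echet differentiability from $H$ to $V_0$; it then derives the explicit formula from the second identity $(A_\lambda^\eps)^{-1}(y)=(A^\eps)^{-1}(y-\lambda R(A_\lambda^\eps)^{-1}(y))$ and another application of the chain rule. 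You instead compute the difference quotients directly, using the uniform $\eps^{-1}$-Lipschitz bound of (P1) to control the increment of $A^\eps$ along the moving base point and thereby upgrade the $V$-convergence to $V_0$-convergence, and you obtain the formula by applying $S=D((A^\eps)^{-1})(A^\eps(x_\lambda^\eps))$ to the linearized resolvent equation $(\lambda R+S^{-1})h=k$ and checking that $I+\lambda SR:V_0\to H$ is a bounded bijection. Your route is more elementary and self-contained (no appeal to differentiability of compositions involving $(A_\lambda^\eps)^{-1}$ itself), at the price of delivering only G\^ateaux differentiability into $V_0$ — which is exactly what the statement asks for — while the paper's composition argument gives the stronger Fr\'echet regularity essentially for free. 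Both arguments identify the same operator identity $[I+\lambda S\circ R]\circ D((A_\lambda^\eps)^{-1})(y)=S$, so the conclusions agree.
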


\begin{proof}
  As $\lambda$ and $\eps$ are fixed throughout, we shall
  simplify notation and drop them in this proof.  
  By the classical results on the sum of subdifferentials
  \cite[Cor. 2.1, p. 41]{brezis}, we have that
  $A_\lambda^\eps=\lambda R + A^\eps_{|V} = \partial \varphi^\eps_\lambda$ so that   
  \[
  \partial (\varphi_\lambda^\eps)^*=(\lambda R + A^\eps_{|V})^{-1}:V^* \to V
  \]
  is Lipschitz-continuous.
  This implies that $(\varphi^\eps_\lambda)^*\in C^1(V^*)$ and 
  $D(\varphi^\eps_\lambda)^*=(\lambda R + A^\eps_{|V})^{-1}$
  is bounded on bounded subsets of $V^*$. Let us show now that 
  $(\lambda R + A^\eps_{|V})^{-1} \in C^1(V^*,V)$. 
  To this end, 
  we first note that the strong monotonicity of $A^\eps$ and the definition of 
  G\^ateaux derivative readily imply that 
  \[
  (DA^\eps(x)h,h)_H\geq \frac{c_A}2\norm{h}_H^2 \qquad\forall\,x,h\in H\,.
  \]
  Let $y\in V^*$ be arbitrary and set $x:=(\lambda R + A^\eps_{|V})^{-1}y\in V$.
  Since $R$ and $A^\eps_{|V}$ are Fr\'echet-differentiable in $x$ by {(P3)}, it is well-defined
  the operator
  \[
  D(\lambda R + A^\eps_{|V})(x)=\lambda R + DA^\eps_{|V}(x)\in \cL(V,V^*)\,:
  \] 
  let us show that it is an isomorphism from $V$ to $V^*$. It is clear that if $k\in V$ satisfies
  $(\lambda R + A^\eps_{|V}(x))k=0$, then
  \[
  \lambda\norm{k}^2_V + \frac{c_A}2\norm{k}_H^2\leq0\,,
  \]
  from which $k=0$, hence $\lambda R + DA^\eps_{|V}(x)$ is injective. Moreover, 
  since $k\mapsto \lambda Rk + DA^\eps_{|V}(x) k$ is linear,
  continuous, monotone, and coercive on $V$,
  it is also surjective, hence an isomorphism from $V$ to $V^*$.
  The theorem on differentiability of the inverse function yields then that 
  $(\lambda R + A^\eps_{|V})^{-1}$ is Fr\'echet-differentiable in $V^*$ and 
  \[
  D(\lambda R + A^\eps_{|V})^{-1}(y)=(\lambda R + DA^\eps_{|V}((\lambda R + A^\eps_{|V})^{-1}y))^{-1} 
  \qquad\forall\,y\in V^*\,.
  \]
  Let us show finally that $y\mapsto D(\lambda R + A^\eps_{|V})^{-1}(y)$ 
  is continuous from $V^*$ to $\cL(V^*, V)$.
  Let $y\in V^*$, $(y_n)_n\subset V^*$ with $y_n\to y$ in $V^*$, and 
  $h\in V^*$. Setting $k_n:=D(\lambda R + A^\eps_{|V})^{-1}(y_n)h$
  and $k:=D(\lambda R + A^\eps_{|V})^{-1}(y)h$, we have 
  \[
  \lambda R(k_n-k) + DA^\eps_{|V}((\lambda R + A^\eps_{|V})^{-1}y_n)k_n-
  DA^\eps_{|V}((\lambda R + A^\eps_{|V})^{-1}y)k=h-h=0\,,
  \]
  from which
  \[
  \lambda\norm{k_n-k}_V^2\leq
  \norm{DA^\eps_{|V}((\lambda R + A^\eps_{|V})^{-1}y_n)-DA^\eps_{|V}((\lambda R 
  + A^\eps_{|V})^{-1}y)}_{\cL(V,V^*)}
  \norm{k_n-k}_V\,.
  \]
  The Young inequality yields
  \[
  \lambda\norm{k_n-k}_V^2\leq
  \frac1\lambda\norm{DA^\eps_{|V}((\lambda R + A^\eps_{|V})^{-1}y_n)-
  DA^\eps_{|V}((\lambda R + A^\eps_{|V})^{-1}y)}_{\cL(V,V^*)}^2\,.
  \]
  Since $(\lambda R + A^\eps_{|V})^{-1}y_n\to (\lambda R + A^\eps_{|V})^{-1}y$ in $V$ and 
  $A^\eps_{|V}\in C^1(V,V^*)$ by (P3), we deduce that the right-hand side converges to $0$
  as $n\to\infty$, hence also
  \[
  \norm{D(\lambda R + A^\eps_{|V})^{-1}(y_n)-D(\lambda R + A^\eps_{|V})^{-1}(y)}_{\cL(V^*,V)}^2
  =\sup_{\norm{h}_{V^*}\leq 1}\norm{k_n-k}_{V}^2 \to 0
  \]
  as $n\to \infty$, from which $(\lambda R + A^\eps_{|V})^{-1} \in C^1(V^*,V)$.
  We deduce that $(\varphi^\eps_\lambda)^* \in C^2(V^*)$.
  Moreover, the fact that $(\lambda R + A^\eps_{|V})^{-1}$ is Lipschitz-continuous yields
  immediately that $D^2(\varphi^\eps_\lambda)^*$
  is bounded in $V^*$.

  Let us prove the last part of the lemma. Note that, for every $y\in H$, 
  setting $x_\lambda^\eps:=(A_\lambda^\eps)^{-1}(v)\in V_0$, we have
  $\lambda R x_\lambda^\eps + A^\eps(x_\lambda^\eps) = y$, from which
  $x_\lambda^\eps=\frac1\lambda R^{-1}(y-A^\eps(x_\lambda^\eps))$, so that 
  \[
  (A_\lambda^\eps)^{-1}(y) = \frac1\lambda R^{-1}(y - A^\eps((A_\lambda^\eps)^{-1}(y))) \qquad\forall\,y\in H\,.
  \]
  Since we already know that $y\mapsto (A_\lambda^\eps)^{-1}(y)\in C^1(H;V)$,
  recalling that $A^\eps_{|V}\in C^1(V;H)$ by {(P3)}
  and that $R^{-1}:H\to V_0$ is linear and continuous, 
  we infer that the operator
  \[
  y\mapsto (A_\lambda^\eps)^{-1}(y) = \frac1\lambda R^{-1}(y - A^\eps((A_\lambda^\eps)^{-1}(y)))
  \]
  is Fr\'echet-differentiable from $H$ to $V_0$. 
  Furthermore, since $x_\lambda^\eps = (A^\eps)^{-1}(y-\lambda R x_\lambda^\eps)$, we also have that 
  \[
  (A_\lambda^\eps)^{-1}(y) = (A^\eps)^{-1}(y-\lambda R (A_\lambda^\eps)^{-1}(y)) \qquad\forall\, y\in H\,.
  \]
  Since we have just proved that $y\mapsto (A_\lambda^\eps)^{-1}(y)$ is Fr\'echet-differentiable
  from $H$ to $V_0$, taking into account that $R:V_0\to H$ is linear continuous and that 
  $(A^{\eps})^{-1}$ is Lipschitz-continuous and G\^ateaux-differentiable from $H$ to $H$, we get
  \[
  D((A_\lambda^\eps)^{-1})(y) = D((A^\eps)^{-1})(y-\lambda R x_\lambda^\eps)\circ\left(I-
  \lambda R\circ D((A_\lambda^\eps)^{-1})(y)\right)\,,
  \]
  from which 
  \[
  \left[I + \lambda D((A^\eps)^{-1})(A^\eps(x_\lambda^\eps))\circ R\right]\circ D((A_\lambda^\eps)^{-1})(y)
  =D((A^\eps)^{-1})(A^\eps(x_\lambda^\eps))\,.
  \]
  Since $\left[I + \lambda D((A^\eps)^{-1})(A^\eps(x_\lambda^\eps))\circ R\right] \in \cL(V_0,H)$ is
  a linear isomorphism, we conclude.
\end{proof}

The next lemmata state some asymptotic properties 
of the operator $A_\lambda^\eps$ when $\lambda\searrow0$ and $\eps$ is fixed.
\begin{lem}
 \label{lem:as_lam}
 Let $y\in H$ and $\eps\in(0,c_A^{-1})$ be fixed. 
 For any $\lambda>0$ set $x_\lambda^\eps:=(A_\lambda^\eps)^{-1}(y)$:
 then, as $\lambda\searrow0$,
 \begin{align*}
 &x_\lambda^\eps \to (A^{\eps})^{-1}(y) \quad\text{in } H\,,\\
 &A^\eps(x_\lambda^\eps) \to y \quad\text{in } H\,,\\
 &D((A_\lambda^\eps)^{-1})(y) \wto D((A^\eps)^{-1})(y) \quad\text{in } \cL_w(H,H)\,.
 \end{align*}
\end{lem}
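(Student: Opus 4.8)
The plan is to derive the three convergences in order, exploiting the elliptic structure of $A_\lambda^\eps = \lambda R + A^\eps_{|V}$ together with the strong monotonicity of $A^\eps$ from {(P1)}. First I would fix $y\in H$ and $\eps\in(0,c_A^{-1})$, set $x_\lambda^\eps:=(A_\lambda^\eps)^{-1}(y)\in V_0$, and also set $x^\eps:=(A^\eps)^{-1}(y)\in H$. By definition $\lambda R x_\lambda^\eps + A^\eps(x_\lambda^\eps) = y$ in $V^*$. Testing this identity by $x_\lambda^\eps\in V$ and using {(P1)} (namely $(A^\eps(x_\lambda^\eps)-A^\eps(0),x_\lambda^\eps)_H\geq\frac{c_A}2\|x_\lambda^\eps\|_H^2$ together with the linear growth $\|A^\eps(0)\|_H\leq C_A$) yields
\[
\lambda\|x_\lambda^\eps\|_V^2 + \frac{c_A}{2}\|x_\lambda^\eps\|_H^2 \leq \ip{y}{x_\lambda^\eps}_V + C_A\|x_\lambda^\eps\|_H \leq \tfrac{\lambda}{2}\|x_\lambda^\eps\|_V^2 + \tfrac{1}{2\lambda}\|y\|_{V^*}^2 + C_A\|x_\lambda^\eps\|_H\,,
\]
so that $\|x_\lambda^\eps\|_H$ stays bounded as $\lambda\searrow0$ and $\lambda\|x_\lambda^\eps\|_V^2$ is bounded; in particular $\lambda R x_\lambda^\eps = y - A^\eps(x_\lambda^\eps)\to 0$ is controlled. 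By {(P1)} $\|A^\eps(x_\lambda^\eps)\|_H\leq C_A(1+2\|x_\lambda^\eps\|_H)$ is bounded as well. Hence along a subsequence $x_\lambda^\eps\wto x$ in $H$ and $A^\eps(x_\lambda^\eps)\wto \xi$ in $H$.

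For the first two convergences I would pass to the limit in $\lambda R x_\lambda^\eps + A^\eps(x_\lambda^\eps) = y$: since $\lambda R x_\lambda^\eps\to 0$ in $V^*$ (as $\|\lambda R x_\lambda^\eps\|_{V^*}=\lambda\|x_\lambda^\eps\|_V\leq\sqrt\lambda\sqrt{\lambda\|x_\lambda^\eps\|_V^2}\to 0$), we get $\xi = y$ in $V^*$, hence in $H$. To upgrade to strong convergence in $H$ and identify $x$, I would test the difference $\lambda R x_\lambda^\eps + A^\eps(x_\lambda^\eps) - A^\eps(x^\eps) = y - A^\eps(x^\eps) = 0$ with $x_\lambda^\eps - x^\eps$; using {(H1)}'s assumption $(A^\eps(z),Rz)_H\geq 0$ for $z\in V_0$ is not quite what is needed here, so instead I use strong monotonicity of $A^\eps$ directly:
\[
\frac{c_A}{2}\|x_\lambda^\eps - x^\eps\|_H^2 \leq (A^\eps(x_\lambda^\eps)-A^\eps(x^\eps), x_\lambda^\eps - x^\eps)_H = -\lambda(R x_\lambda^\eps, x_\lambda^\eps - x^\eps)_H = -\lambda\|x_\lambda^\eps\|_V^2 + \lambda\ip{Rx_\lambda^\eps}{x^\eps}_V\,.
\]
Wait --- $x^\eps\in H$ need not lie in $V$, so the last pairing may be ill-defined; this is the one delicate point. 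The clean fix is to test instead with $x_\lambda^\eps - x_\mu^\eps$ for two parameters $\lambda,\mu$ and show $(x_\lambda^\eps)$ is Cauchy in $H$, or — more simply — to note that $-\lambda(Rx_\lambda^\eps, x_\lambda^\eps)_H = -\lambda\|x_\lambda^\eps\|_V^2\leq 0$ gives $\frac{c_A}2\|x_\lambda^\eps-x^\eps\|_H^2\leq \lambda(Rx_\lambda^\eps, x^\eps)_H$ only when $x^\eps\in V$. I would therefore argue as follows: from $A^\eps(x_\lambda^\eps)\to y$ in $H$ (strongly, once established) and the fact that $(A^\eps)^{-1}$ is Lipschitz on $H$, we get $x_\lambda^\eps = (A^\eps)^{-1}(A^\eps(x_\lambda^\eps))\to (A^\eps)^{-1}(y) = x^\eps$ strongly in $H$. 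So it suffices to prove $A^\eps(x_\lambda^\eps)\to y$ strongly. For that, write $\|A^\eps(x_\lambda^\eps)-y\|_H^2 = \|\lambda Rx_\lambda^\eps\|_H^2 = \lambda^2\|Rx_\lambda^\eps\|_H^2$; but $\|Rx_\lambda^\eps\|_H$ need not be bounded. Instead: since $A^\eps(x_\lambda^\eps)\wto y$ in $H$, it is enough to show $\limsup_{\lambda}\|A^\eps(x_\lambda^\eps)\|_H\leq\|y\|_H$. From $(A^\eps(x_\lambda^\eps), A^\eps(x_\lambda^\eps))_H = (y,A^\eps(x_\lambda^\eps))_H - \lambda(Rx_\lambda^\eps, A^\eps(x_\lambda^\eps))_H$ and $(Rx_\lambda^\eps, A^\eps(x_\lambda^\eps))_H\geq 0$ by the last hypothesis in {(H1)} (valid since $x_\lambda^\eps\in V_0$), we get $\|A^\eps(x_\lambda^\eps)\|_H^2\leq(y,A^\eps(x_\lambda^\eps))_H\to\|y\|_H^2$, giving both $\limsup\|A^\eps(x_\lambda^\eps)\|_H\leq\|y\|_H$ and hence strong convergence $A^\eps(x_\lambda^\eps)\to y$ in $H$, and then $x_\lambda^\eps\to x^\eps$ in $H$ by Lipschitz continuity of $(A^\eps)^{-1}$. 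This also shows the limits do not depend on the subsequence, so the full nets converge.

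For the third convergence I would use the characterizations from Lemma \ref{lem:F_lam} and Lemma \ref{lem:A_eps}(P3). Write $T_\lambda := D((A_\lambda^\eps)^{-1})(y)\in\cL(H,V_0)$ and $T := D((A^\eps)^{-1})(y)\in\cL(H,H)$. By Lemma \ref{lem:F_lam},
\[
\left[I + \lambda D((A^\eps)^{-1})(A^\eps(x_\lambda^\eps))\circ R\right]\circ T_\lambda = D((A^\eps)^{-1})(A^\eps(x_\lambda^\eps))\,.
\]
Fixing $g\in H$ and testing the identity $\lambda R(T_\lambda g) + D((A^\eps)^{-1})(A^\eps(x_\lambda^\eps))(T_\lambda g) = D((A^\eps)^{-1})(A^\eps(x_\lambda^\eps))g$ in $V^*$ against $T_\lambda g\in V$, and using the coercivity estimate $(D((A^\eps)^{-1})(z)h,h)_H\geq c_A'\|h\|_H^2$ (which follows from differentiating $(A^\eps(\cdot))^{-1}$; more directly, $D((A^\eps)^{-1})(z) = (DA^\eps((A^\eps)^{-1}z))^{-1}$ and $(DA^\eps h,h)_H\geq\frac{c_A}2\|h\|_H^2$ from the proof of Lemma \ref{lem:F_lam}, so its inverse has the analogous lower bound with constant $\tfrac{2}{\text{Lip}}$), we obtain $\lambda\|T_\lambda g\|_V^2 + c\|T_\lambda g\|_H^2\leq \|D((A^\eps)^{-1})(A^\eps(x_\lambda^\eps))\|_{\cL(H,H)}\|g\|_H\|T_\lambda g\|_H$, uniformly bounded since $A^\eps(x_\lambda^\eps)\to y$ and $D((A^\eps)^{-1})\in C^0(H;\cL_w(H,H))$ (hence locally norm-bounded by uniform boundedness). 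Thus $(T_\lambda g)_\lambda$ is bounded in $H$ and $\lambda\|T_\lambda g\|_V^2$ is bounded, so $\lambda R(T_\lambda g)\to 0$ in $V^*$; extracting a weak limit $T_\lambda g\wto \zeta_g$ in $H$ and passing to the limit using $D((A^\eps)^{-1})(A^\eps(x_\lambda^\eps))\to D((A^\eps)^{-1})(y) = T$ in $\cL_s(H,H)$ (this uses {(H4)} with the trivial family $x_\eps\equiv x_\lambda^\eps$, or more simply the continuity $D((A^\eps)^{-1})\in C^0(H;\cL_w(H,H))$ combined with Lipschitz continuity of $(A^\eps)^{-1}$ giving strong-operator convergence; in fact {(P3)} applied to $(A^\eps)^{-1}=\eps I + A^{-1}$ via {(H3)} suffices), we identify $T\zeta_g = Tg$, and since $T$ is injective (it is a continuous isomorphism of $H$ as noted in the proof of Lemma \ref{lem:A_eps}(P3)), $\zeta_g = Tg$. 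Uniqueness of the limit gives $T_\lambda g\wto Tg$ in $H$ for every $g\in H$, i.e. $T_\lambda\wto T$ in $\cL_w(H,H)$, which is exactly the claim. The main obstacle is the one flagged above: ensuring all pairings stay well-defined despite $x^\eps$ and the limit objects living only in $H$ (not $V$), which is handled by routing strong convergence through the Lipschitz inverse $(A^\eps)^{-1}$ and through the sign condition $(Rx_\lambda^\eps, A^\eps(x_\lambda^\eps))_H\geq 0$ in {(H1)} rather than testing differences directly in $V$.
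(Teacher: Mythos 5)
Your treatment of the first two convergences is essentially the paper's: the a priori bound from testing by $x_\lambda^\eps$, the inequality $\norm{A^\eps(x_\lambda^\eps)}_H^2\leq (y,A^\eps(x_\lambda^\eps))_H$ obtained from the sign condition $(A^\eps(x),Rx)_H\geq0$ in (H1), weak convergence plus convergence of norms to get strong convergence in $H$, and finally the Lipschitz continuity of $(A^\eps)^{-1}$. One slip there: in your first display you estimate $\ip{y}{x_\lambda^\eps}_V\leq\frac\lambda2\norm{x_\lambda^\eps}_V^2+\frac1{2\lambda}\norm{y}_{V^*}^2$, whose right-hand side blows up as $\lambda\searrow0$ and therefore does not yield the boundedness of $\norm{x_\lambda^\eps}_H$ and of $\lambda\norm{x_\lambda^\eps}_V^2$ that you then use; since $y\in H$ you should write $\ip{y}{x_\lambda^\eps}_V=(y,x_\lambda^\eps)_H\leq\frac{c_A}4\norm{x_\lambda^\eps}_H^2+\frac1{c_A}\norm{y}_H^2$, as the paper does.

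The third convergence is where the real gap lies. First, the identity you test is mis-transcribed: from $[I+\lambda S_\lambda\circ R]\circ T_\lambda=S_\lambda$ with $S_\lambda:=D((A^\eps)^{-1})(A^\eps(x_\lambda^\eps))$ one gets $T_\lambda g+\lambda S_\lambda RT_\lambda g=S_\lambda g$, not $\lambda RT_\lambda g+S_\lambda T_\lambda g=S_\lambda g$; the coercivity estimate should rather be run on $\lambda RT_\lambda g+DA^\eps(x_\lambda^\eps)T_\lambda g=g$ (note $DA^\eps(x_\lambda^\eps)=S_\lambda^{-1}$, not $S_\lambda$). Second, and more seriously, your identification of the weak limit rests on $S_\lambda\to D((A^\eps)^{-1})(y)$ in $\cL_s(H,H)$, which the hypotheses do not provide: (H3) gives only continuity into $\cL_w(H,H)$ on all of $H$ together with continuity into $\cL(Y,H)$, and (H4) does not apply here --- it concerns the limit $\eps\searrow0$ along families with $x_\eps\wto x$ in $V$ and $A^\eps(x_\eps)\wto y$, whereas in this lemma $\eps$ is fixed, $\lambda\searrow0$, and $(x_\lambda^\eps)_\lambda$ is not bounded in $V$ (only $\lambda^{1/2}x_\lambda^\eps$ is). The troublesome term $\lambda S_\lambda RT_\lambda g$ is the product of a merely weakly-operator-convergent sequence of operators with the weakly-null sequence $\lambda RT_\lambda g$, and such products do not pass to the limit in general. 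The paper's device is to exploit the symmetry of $D(A^{-1})$: pairing with $z\in V\subset Y$ gives $(\lambda S_\lambda RT_\lambda g,z)_H=\eps(\lambda RT_\lambda g,z)_H+(\lambda RT_\lambda g,D(A^{-1})(A^\eps(x_\lambda^\eps))z)_H$, and the $\cL(Y,H)$ continuity in (H3) makes $D(A^{-1})(A^\eps(x_\lambda^\eps))z$ converge strongly in $H$, so each pairing (weak against strong) tends to zero and $\lambda S_\lambda RT_\lambda g\wstarto0$ in $V^*$; the limit equation then reads $\zeta_g=D((A^\eps)^{-1})(y)g$ directly, with no need for the injectivity argument, which is itself an artifact of the wrong identity. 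Without this symmetry-plus-$\cL(Y,H)$ step, or an equivalent one, your proof of the third convergence does not close.
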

\begin{proof}
  Since $\lambda Rx_\lambda^\eps + A^\eps(x_\lambda^\eps)=y$, testing by 
  $x_\lambda^\eps$ and using {(P1)} we get
  \[
  \lambda\norm{x_\lambda^\eps}_V^2 + \frac{c_A}{2}\norm{x_\lambda}_H^2
  \leq\norm{y}_H\norm{x_\lambda^\eps}_H\leq 
  \frac{c_A}{4}\norm{x_\lambda^\eps}_H^2 + \frac1{c_A}\norm{y}_H^2\,,
  \]
  from which $\lambda x_\lambda^\eps\to 0$ in $V$. Moreover, testing by 
  $A^\eps(x_\lambda^\eps)$
  and using the monotonicity assumption in 
  {(H1)} we get $\norm{A^\eps(x_\lambda^\eps)}_H\leq\norm{y}_H$ for every $\lambda$, from which 
  the second convergence follows by the uniform convexity of $H$. The first convergence
  is then a consequence of the fact that $(A^\eps)^{-1}$ is Lipschitz-continuous. As for the third one, 
  let $h\in H$ be arbitrary, and set $k_\lambda^\eps:=D((A_\lambda^\eps)^{-1})(y)h$
  and $h^\eps_\lambda:=D((A^\eps)^{-1})(A^\eps(x_\lambda^\eps))h$, so that by Lemma~\ref{lem:F_lam}
  we have 
  \[
  k_\lambda^\eps + \lambda D((A^{\eps})^{-1})(A^\eps(x_\lambda^\eps)) Rk_\lambda^\eps = h_\lambda^\eps\,.
  \]
  Note that by definition of $A^\eps$ and {(H3)}, we have 
  \[
  h_\lambda^\eps=\eps h + D(A^{-1})(A^\eps(x_\lambda^\eps))h  
  \wto \eps h + D(A^{-1})(y)h=D((A^\eps)^{-1})(y)h \qquad\text{in } H\,,
  \]
  so that in particular $(k_\lambda^\eps)_\lambda$ is bounded in $H$.
  Hence, testing by $\lambda Rk_\lambda^\eps$ and employing the monotonicity 
  of $A^{-1}$ we get
  \[
  \lambda\norm{k_\lambda^\eps}_V^2 + \eps\lambda^2\norm{Rk_\lambda^\eps}_H^2
  \leq\lambda\norm{h_\lambda^\eps}_H\norm{Rk_\lambda^\eps}_H
  \leq \frac{\eps\lambda^2}{2}\norm{Rk_\lambda^\eps}_H^2 + \frac{1}{2\eps}\norm{h_\lambda^\eps}_H^2\,.
  \]
  Since $(h_\lambda^\eps)_\lambda$ is bounded in $H$, 
  we infer that $(\lambda k_\lambda^\eps)_\lambda$
  is bounded in $V_0$ and $(\lambda^{1/2}k_\lambda^\eps)_\lambda$
  is bounded in $V$: it follows that $\lambda k_\lambda^\eps
  \wto 0$ in $V_0$, hence in particular that $\lambda Rk^\eps_\lambda\wto 0$ in $H$.
  Since $D(A^{-1})\in C^0(H; \cL(V,H))$ and $D(A^{-1})(A^\eps(x_\lambda^\eps))$
  is symmetric, for every $z\in V$ we have
  \begin{align*}
  \left(\lambda D((A^{\eps})^{-1})(A^\eps(x_\lambda^\eps)) Rk_\lambda^\eps,z\right)_H=
  \eps\left(\lambda Rk_\lambda^\eps,z\right)_H + 
  \left(\lambda Rk_\lambda^\eps, D(A^{-1})(A^\eps(x_\lambda^\eps))z\right)_H\to 0\,,
  \end{align*}
  where we have used that $\lambda Rk_\lambda^\eps\wto0$
  and $D((A^{\eps})^{-1})(A^\eps(x_\lambda^\eps))z\to D((A^{\eps})^{-1})(y)z$ in $H$.
  Consequently, we have that 
  \[
  \lambda D(A^{-1})(A^\eps(x_\lambda^\eps))Rk_\lambda^\eps \wstarto 0 \qquad\text{in } V^*\,.
  \]
  Since we also have that $(k^\eps_\lambda)_\lambda$ is bounded in $H$,
  hence $k^\eps_\lambda\wto k^\eps$ in $H$ for a certain $k^\eps\in H$,
  letting $\lambda\searrow0$ we infer that $k^\eps=D((A^\eps)^{-1})(y)h$, and we conclude.
\end{proof}

\begin{lem}
  \label{lem:ident_gat}
  Let $\eps\in(0,c_A^{-1})$ and $y\in H$ be fixed. For any $\lambda>0$ let 
  $y_\lambda \in H$, and define $x^\eps:=(A^\eps)^{-1}(y)$
  and $x^\eps_\lambda:=(A_\lambda^\eps)^{-1}(y_\lambda)$.
  If $x^\eps_\lambda\to x^\eps$ in $H$ as $\lambda\searrow0$
  and $(y_\lambda)_\lambda$ is bounded in $H$,
  then it holds that, as $\lambda\searrow0$,
  \begin{align*}
  &A^\eps(x_\lambda^\eps)\to y \quad\text{in } H\,,\\
  &y_\lambda\wto y \quad\text{in } H\,,\\
  &D((A_\lambda^\eps)^{-1})(y_\lambda) \wto D((A^\eps)^{-1})(y) \quad
  \text{in } \cL_w(H,H)\,.
  \end{align*}
\end{lem}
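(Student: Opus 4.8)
The plan is to proceed in three main steps, mirroring the structure of Lemma~\ref{lem:as_lam} but now with a moving datum $y_\lambda$ in place of the fixed $y$. First I would establish the convergence $A^\eps(x_\lambda^\eps)\to y$ in $H$. From the identity $\lambda R x_\lambda^\eps + A^\eps(x_\lambda^\eps) = y_\lambda$, testing by $x_\lambda^\eps$ and using (P1) together with the boundedness of $(y_\lambda)_\lambda$ in $H$, one gets $\lambda\norm{x_\lambda^\eps}_V^2 + \frac{c_A}{2}\norm{x_\lambda^\eps}_H^2 \le \norm{y_\lambda}_H\norm{x_\lambda^\eps}_H$; Young's inequality then yields that $\lambda x_\lambda^\eps\to 0$ in $V$, and hence $\lambda R x_\lambda^\eps\to 0$ in $V^*$. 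Since $x_\lambda^\eps\to x^\eps$ in $H$ by hypothesis and $A^\eps$ is Lipschitz-continuous by (P1), we have $A^\eps(x_\lambda^\eps)\to A^\eps(x^\eps) = y$ in $H$. This is the step I expect to be essentially routine.

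For the second convergence, from $y_\lambda = \lambda R x_\lambda^\eps + A^\eps(x_\lambda^\eps)$ we already know $A^\eps(x_\lambda^\eps)\to y$ in $H$ (strong) and $\lambda R x_\lambda^\eps\to 0$ in $V^*$; hence $y_\lambda\to y$ in $V^*$, which gives $y_\lambda\wto y$ in $H$ once we recall that $(y_\lambda)_\lambda$ is bounded in $H$ (a bounded sequence converging weakly in the larger space $V^*$ must converge weakly in $H$ to the same limit, by density of $H$ in $V^*$ and uniqueness of weak limits).

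The third convergence is the main obstacle and follows the argument in the proof of Lemma~\ref{lem:as_lam} closely. Fix $h\in H$ and set $k_\lambda^\eps := D((A_\lambda^\eps)^{-1})(y_\lambda)h$ and $h_\lambda^\eps := D((A^\eps)^{-1})(A^\eps(x_\lambda^\eps))h = \eps h + D(A^{-1})(A^\eps(x_\lambda^\eps))h$. By Lemma~\ref{lem:F_lam} (applied at $y_\lambda$),
\[
  k_\lambda^\eps + \lambda D((A^\eps)^{-1})(A^\eps(x_\lambda^\eps)) R k_\lambda^\eps = h_\lambda^\eps\,.
\]
Since $A^\eps(x_\lambda^\eps)\to y$ in $H$ from Step 1, hypothesis (H3) gives $D(A^{-1})(A^\eps(x_\lambda^\eps))h\wto D(A^{-1})(y)h$ in $H$, so $h_\lambda^\eps\wto D((A^\eps)^{-1})(y)h$ in $H$ and in particular $(h_\lambda^\eps)_\lambda$ and thus $(k_\lambda^\eps)_\lambda$ are bounded in $H$. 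Testing the identity by $\lambda R k_\lambda^\eps$ and using monotonicity of $A^{-1}$ and Young's inequality yields $\lambda\norm{k_\lambda^\eps}_V^2 + \eps\lambda^2\norm{R k_\lambda^\eps}_H^2 \le \frac1{2\eps}\norm{h_\lambda^\eps}_H^2$, so that $\lambda k_\lambda^\eps$ is bounded in $V_0$, whence $\lambda k_\lambda^\eps\wto 0$ in $V_0$ and $\lambda R k_\lambda^\eps\wto 0$ in $H$. Using $D(A^{-1})\in C^0(H;\cL(V,H))$, symmetry of $D(A^{-1})(A^\eps(x_\lambda^\eps))$, and the strong convergence $A^\eps(x_\lambda^\eps)\to y$, one checks as in Lemma~\ref{lem:as_lam} that for every $z\in V$,
\[
  \bigl(\lambda D((A^\eps)^{-1})(A^\eps(x_\lambda^\eps)) R k_\lambda^\eps, z\bigr)_H
  = \eps\bigl(\lambda R k_\lambda^\eps, z\bigr)_H + \bigl(\lambda R k_\lambda^\eps, D(A^{-1})(A^\eps(x_\lambda^\eps)) z\bigr)_H \to 0\,,
\]
so this term tends to $0$ weakly-$*$ in $V^*$. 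Passing to the limit in the identity above (extracting a weak limit $k^\eps$ of $k_\lambda^\eps$ in $H$) identifies $k^\eps = D((A^\eps)^{-1})(y)h$; since the limit does not depend on the subsequence, the whole family converges, and by arbitrariness of $h$ we obtain $D((A_\lambda^\eps)^{-1})(y_\lambda)\wto D((A^\eps)^{-1})(y)$ in $\cL_w(H,H)$. The only real difference from Lemma~\ref{lem:as_lam} is that here the convergence $A^\eps(x_\lambda^\eps)\to y$ is obtained from the assumed strong convergence $x_\lambda^\eps\to x^\eps$ rather than from a uniform bound derived by testing, but once Step~1 is in place the rest is identical.
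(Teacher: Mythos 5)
Your proposal is correct and follows essentially the same route as the paper: Lipschitz continuity of $A^\eps$ plus the assumed convergence $x_\lambda^\eps\to x^\eps$ for the first limit, comparison in the equation $\lambda Rx_\lambda^\eps+A^\eps(x_\lambda^\eps)=y_\lambda$ together with the $H$-bound on $(y_\lambda)_\lambda$ for the second, and a verbatim repetition of the argument of Lemma~\ref{lem:as_lam} (with $y_\lambda$ in place of $y$ and the strong convergence $A^\eps(x_\lambda^\eps)\to y$ feeding into (H3)) for the convergence of the G\^ateaux derivatives. No discrepancies worth noting.
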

\begin{proof}
  First of all, since $A^\eps$ is Lipschitz-continuous, we have $A^\eps(x_\lambda^\eps)\to A^\eps(x^\eps)=y$.
  Moreover, since $\lambda Rx_\lambda^\eps + A^\eps(x_\lambda^\eps)=y_\lambda$, testing by 
  $x_\lambda^\eps$ and using {(P1)} we get
  \[
  \lambda\norm{x_\lambda^\eps}_V^2 + \frac{c_A}{2}\norm{x_\lambda}_H^2
  \leq\norm{y_\lambda}_H\norm{x_\lambda^\eps}_H\leq 
  \frac{c_A}{4}\norm{x_\lambda^\eps}_H^2 + \frac1{c_A}\norm{y_\lambda}_H^2\,,
  \]
  from which $\lambda x_\lambda^\eps\to 0$ in $V$.
  Hence, by comparison in the equation and the boundedness of $(y_\lambda)_\lambda$
  in $H$ we infer that $y_\lambda\wto y$ in $H$, and the second convergence is proved.
  Let us show the last one.
  Let $h\in H$ be arbitrary, and set $k_\lambda^\eps:=D((A_\lambda^\eps)^{-1})(y_\lambda)h$
  and $h^\eps_\lambda:=D((A^\eps)^{-1})(A^\eps(x_\lambda^\eps))h$, so that by Lemma~\ref{lem:F_lam}
  we have 
  \[
  k_\lambda^\eps + \lambda D((A^{\eps})^{-1})(A^\eps(x_\lambda^\eps)) Rk_\lambda^\eps = h_\lambda^\eps\,,
  \]
  where by definition of $A^\eps$ and {(H3)}, we have 
  \[
  h_\lambda^\eps=\eps h + D(A^{-1})(A^\eps(x_\lambda^\eps))h  
  \wto \eps h + D(A^{-1})(y)h=D((A^\eps)^{-1})(y)h \qquad\text{in } H\,,
  \]
  so that in particular $(h_\lambda^\eps)_\lambda$ is bounded in $H$.
  Arguing as in the proof of Lemma~\ref{lem:as_lam} 
  we obtain that $k_\lambda^\eps\wto k^\eps$ and $\lambda Rk_\lambda^\eps\wto 0$
  in $H$ for a certain $k^\eps\in H$.
  Moreover, for every $z\in V$ we have 
  $D((A^{\eps})^{-1})(A^\eps(x_\lambda^\eps))z\to D((A^{\eps})^{-1})(y)z$ in $H$ thanks to {(H3)},
  so that by the symmetry of $D(A^{-1})(A^\eps(x_\lambda^\eps))$
  we get again
  \begin{align*}
  \left(\lambda D((A^{\eps})^{-1})(A^\eps(x_\lambda^\eps)) Rk_\lambda^\eps,z\right)_H=
  \eps\left(\lambda Rk_\lambda^\eps,z\right)_H + 
  \left(\lambda Rk_\lambda^\eps, D(A^{-1})(A^\eps(x_\lambda^\eps))z\right)_H\to 0\,,
  \end{align*}
  hence
  \[
  \lambda D(A^{-1})(A^\eps(x_\lambda^\eps))Rk_\lambda^\eps \wstarto 0 \qquad\text{in } V^*\,.
  \]
  By comparison in the equation we infer then that $k^\eps=D((A^\eps)^{-1})(y)h$, and we conclude.
\end{proof}

Finally, we prove a fundamental asymptotic property of $A_\lambda^\eps$
when $\eps=\lambda$ converge jointly to $0$.
To this end, we introduce for brevity of notation
the operator $\tilde A_\lambda:=A_\lambda^\lambda$ for any $\lambda>0$.
\begin{lem}
 \label{lem:as_lam'}
 \UUU
  Let $y\in H$, $x:=A^{-1}(y)\in H$, and 
 for any $\lambda>0$ set $x_\lambda:=\tilde A_\lambda^{-1}(y)\in V$,
 with $\tilde A_\lambda (w) := \lambda R w + A^\lambda(w)$ for any $w\in V$.
 Then, as $\lambda\searrow0$,
 it holds that $x_\lambda \wto x$ in $H$ and
 $A^\lambda(x_\lambda) \to y$ in $H$.
 Moreover, if $x\in V$ it also holds that $x_\lambda \to x$ in $V$ and
 \begin{align*}
   &D((\tilde A_\lambda)^{-1})(y) \wto D(A^{-1})(y) \quad\text{in } \mathscr L_w(H,H)\,.
 \end{align*}
\end{lem}
\begin{proof}
  Since $\lambda Rx_\lambda + A^\lambda(x_\lambda)=y$, testing by 
  $x_\lambda$ and using {(P1)} we get
  \[
  \lambda\norm{x_\lambda}_V^2 + \frac{c_A}{2}\norm{x_\lambda}_H^2
  \leq\norm{y}_H\norm{x_\lambda}_H\leq 
  \frac{c_A}{4}\norm{x_\lambda}_H^2 + \frac1{c_A}\norm{y}_H^2\,,
  \]
  from which $\lambda x_\lambda\to 0$ in $V$,
  so that $A^\lambda(x_\lambda)\to y$ in $V^*$. 
  Moreover, testing by 
  $A^\lambda(x_\lambda)$
  and using the monotonicity assumption in 
  {(H1)} we get $\norm{A^\lambda(x_\lambda)}_H\leq\norm{y}_H$ for every $\lambda$, from which 
  the second convergence follows by the uniform convexity of $H$. 
  The first convergence is then a consequence of the strong-weak closure of maximal monotone operators. 
  Let us now suppose that $x\in V$ and show that $x_\lambda\to x$ in $V$.
  From the relation $\lambda Rx_\lambda + A^\lambda(x_\lambda) - y = 0$,
  we test by $x_\lambda$ and rearrange the terms in the following way:
  \[
  \lambda\norm{x_\lambda}_V^2 + \left(A^\lambda(x_\lambda) - y, x_\lambda - (A^\lambda)^{-1}(y)\right)_H=
  \left(y-A^\lambda(x_\lambda), (A^\lambda)^{-1}(y)\right)_H\,.
  \]
  Noting that $y=A^\lambda((A^\lambda)^{-1}(y))$, the second term on the left-hand side is
  nonnegative; moreover, since $(A^\lambda)^{-1}(y)=\lambda y +A^{-1}(y)=\lambda y + x$, we get
  \[
  \lambda\norm{x_\lambda}_V^2\leq \left(y-A^\lambda(x_\lambda), \lambda y+x\right)_H
  \leq\lambda\norm{y}_H\norm{y-A^\lambda(x_\lambda)}_H + \left(y-A^\lambda(x_\lambda), x\right)_H\,.
  \]
  Recalling that $y-A^\lambda(x_\lambda)=\lambda Rx_\lambda$
  and using the Young inequality, we get
  \[
  \lambda\norm{x_\lambda}_V^2
  \leq\lambda\norm{y}_H\norm{y-A^\lambda(x_\lambda)}_H
  + \lambda\norm{x_\lambda}_V\norm{x}_V\leq
  \lambda\norm{y}_H\norm{y-A^\lambda(x_\lambda)}_H
  +\frac\lambda2\norm{x_\lambda}_V^2 + \frac\lambda2\norm{x}_V^2
  \]
  from which, dividing by $\lambda$ and rearranging the terms we get
  \[
  \norm{x_\lambda}_V^2\leq 2\norm{y}_H\norm{y-A^\lambda(x_\lambda)}_H + \norm{x}_V^2\,.
  \]
  Hence, recalling that $A^\lambda(x_\lambda)\to y$ in $H$, we infer that 
  \[
  \limsup_{\lambda\searrow0}
  \norm{x_\lambda}_V^2\leq 2\norm{y}_H\lim_{\lambda\searrow0}
  \norm{y-A^\lambda(x_\lambda)}_H + \norm{x}_V^2 = \norm{x}_V^2\,,
  \]
  from which we conclude by uniform convexity of $V$.

  \UUU
  Eventually, let us prove the last convergence \RRR of the
  statement. We \UUU 
  first note that for every $y_1,y_2\in H$,
  setting $x^i_\lambda:=\tilde A_\lambda^{-1}(y_i)$, for $i=1,2$, one has
  \[
  \lambda R(x_\lambda^1-x_\lambda^2) + A^\lambda(x_\lambda^1) - A^\lambda(x_\lambda^2) = y_1-y_2\,,
  \]
  so that testing by $x_\lambda^1-x_\lambda^2$ and
  exploiting the uniform strong monotonicity of
  $A^\lambda$ (see Lemma~\ref{lem:A_eps}), one deduces that 
  there exists $C>0$ independent of $\lambda$ such that 
  \[
  \|\tilde A_\lambda^{-1}(y_1)-\tilde A_\lambda^{-1}(y_2)\|_H
  \leq C\norm{y_1-y_2}_H \qquad\forall\,y_1,y_2\in H\,.
  \]
  It follows that $\|D((\tilde A_\lambda)^{-1})(y)\|_{\mathscr L(H,H)}\leq C$ 
  for every $\lambda>0$, hence that there 
  exists an  operator $L(y)\in\mathscr L(H,H)$ such that
  \[
    D((\tilde A_\lambda)^{-1})(y) \wto L(y) \quad\text{in } \mathscr L_w(H,H)\,.
  \]
 In order to complete the proof, we need to show that $L(y)h=D(A^{-1})(y)h$ for all $h\in H$:
  by density of $Z$ in $H$ it is enough to check this equality
  for $h\in Z$.
  We follow a similar argument as in Lemma~\ref{lem:as_lam} (where we take $\eps=\lambda$).
  For $h\in Z$ fixed, setting $k_\lambda:=D((\tilde A_\lambda)^{-1})(y)h$
  and $h_\lambda:=D((A^\lambda)^{-1})(A^\lambda(x_\lambda))h$, by Lemma~\ref{lem:F_lam}
  we have 
  \beq\label{aux_corr}
  k_\lambda + \lambda^2 Rk_\lambda + 
  \lambda D(A^{-1})(A^\lambda(x_\lambda)) Rk_\lambda = h_\lambda\,,
  \eeq
  where
  \[
  h_\lambda=\lambda h + D(A^{-1})(A^\lambda(x_\lambda))h \,.
  \]
  Since $(h_\lambda)_\lambda$ is clearly bounded in $H$, by testing \eqref{aux_corr}
  by $\lambda^2Rk_\lambda$ one readily gets 
  \beq
    \label{aux2_corr}
    \lambda\|R^{1/2}k_\lambda\|_H + \lambda^2\norm{Rk_\lambda}_H\leq C\,.
  \eeq
  Now, by interpolation we have
  \[
  \norm{R^{1-\eta}k_\lambda}_H\leq \|R^{1/2}k_\lambda\|_H^{2\eta}\norm{Rk_\lambda}_H^{1-2\eta}\,,
  \]
  so that \eqref{aux2_corr} yields also 
   \beq
    \label{aux3_corr}
    \lambda^{2(1-\eta)}\norm{R^{1-\eta}k_\lambda}_H\leq C\,.
  \eeq
  Moreover, by \RRR(H5) \UUU and the fact that $(x_\lambda)_\lambda$ is bounded in $V$,
  it follows that $(R^\eta h_\lambda)_\lambda$ is bounded in $H$: hence, 
  testing \eqref{aux_corr} by $\lambda^{2(1-\eta)}Rk_\lambda$ we get
  \[
  \lambda^{2(1-\eta)}\norm{k_\lambda}_V^2 + \lambda^{2(2-\eta)}\norm{Rk_\lambda}_H^2
  \leq (h_\lambda, \lambda^{2(1-\eta)}Rk_\lambda)_H = 
  \lambda^{2(1-\eta)}(R^\eta h_\lambda, R^{1-\eta}k_\lambda)_H\,.
  \]
  Taking \eqref{aux3_corr} into account we infer that 
  \beq
    \label{aux4_corr}
     \lambda^{1-\eta}\norm{k_\lambda}_V + \lambda^{2-\eta}\norm{Rk_\lambda}_H \leq C\,.
  \eeq
  Hence, again by interpolation we find that
  \beq
    \label{aux5_corr}
     \lambda^{2-3\eta}\norm{R^{1-\eta}k_\lambda}_H \leq C\,.
  \eeq
  Now, given $z\in Z$ arbitrary, one has that 
  $\|R^{\eta}D(A^{-1})(A^\lambda(x_\lambda))z\|_H\leq C\|z\|_Z$ by
  \RRR(H5) \UUU
  and the fact that $(x_\lambda)_\lambda$ is bounded in $V$, so that
  \eqref{aux5_corr} yields
  \begin{align*}
  \left(\lambda D(A^{-1})(A^\lambda(x_\lambda)) Rk_\lambda, z\right)_H
  =\left(\lambda R^{1-\eta}k_\lambda, R^{\eta}D(A^{-1})(A^\lambda(x_\lambda))z\right)_H
  \leq C\lambda^{3\eta-1}\norm{z}_Z\,.
  \end{align*}
  By recalling that $\eta\in(1/3,1/2)$ one finds 
  \[
  \norm{\lambda D(A^{-1})(A^\lambda(x_\lambda)) Rk_\lambda}_{Z^*}\leq C\lambda^{3\eta-1}\to 0\,,
  \]
  while \eqref{aux2_corr} yields directly 
  \[
  \norm{\lambda^2Rk_\lambda}_{V^*} \leq C\lambda\to 0\,.
  \]
  Passing to the weak limit in $Z^*$ as
  $\lambda\searrow0$ in equation \eqref{aux_corr}, 
  and noting that $h_\lambda\wto D(A^{-1})(y)h$ in $H$, 
  one obtains $L(y)h = D(A^{-1})(y)h$, and concludes.
  \EEE
\end{proof}


\section{A generalized It\^o's formula}
\label{sec:ito}

In this section we prove a generalized It\^o's formula that will be crucial 
in the proofs on the main results of the work. In particular, 
due to the weak assumptions on the derivatives of $\varphi^*$, we cannot 
rely directly on the classical frameworks by {\sc Da Prato \& Zabczyk}
\cite{dapratozab}
or {\sc Pardoux} \cite{Pard},
as the second derivative $D^2\varphi^*$ is assumed to exist in the sense of Fr\'echet
only in $V$ and the process $y$ below is not $V$-valued a priori.
This gives rise to several nontrivial difficulties.
Nevertheless, using the the continuity of the G\^ateaux derivative $D^2\varphi^*$ from $H$ to $\cL_w(H,H)$
and the fact that $y\in A(x)$ for a suitable $V$-valued process $x$,
we are able to show that the It\^o formula for $\varphi^*$ can still be written in an appropriate sense.

\begin{prop}[Generalized It\^o's formula]
  \label{prop:ito}
  Assume that $x$, $y$, $w$ are progressively measurable processes with values in $V$, $H$, and $V^*$,
  respectively, such that
  \begin{align*}
 & x \in L^0(\Omega; L^2(0,T; V))\,, \\
 & y\in L^0(\Omega; L^\infty(0,T; H)\cap C^0([0,T]; V^*))\,,
  \\
& w\in L^0(\Omega; L^2(0,T; V^*))\,,\\
 & y\in A(x) \quad\text{a.e.~in } \Omega\times(0,T)\,,\\
 & C\in L^0(\Omega; L^2(0,T; \cL^2(U,H)))\,,\\
 & y_0\in L^0(\Omega, \cF_0; H)\,,\quad
  x_0:=A^{-1}(y_0)\in L^0(\Omega,\cF_0; V)\,,\\
  &y(t)+\int_0^t w(s)\,ds = y_0  + 
  \int_0^tC(s)\,dW(s) \quad\text{in } V^* \quad\forall\,t\in[0,T]\,,\quad\P\text{-a.s.}
  \end{align*}
  Then, for every $t\in[0,T]$, $\P$-almost surely,
  \begin{align*}
  &\varphi^*(y(t)) + \int_0^t\ip{w(s)}{x(s)}_V\,ds \\
  &\quad= \varphi^*(y_0)
  +\int_0^t\left(x(s), C(s)\right)_H\,dW(s)
  +\frac12\int_0^t\operatorname{Tr}\left(C^*(s)D(A^{-1})(y(s))C(s)\right)\,ds\,.
  \end{align*}
\end{prop}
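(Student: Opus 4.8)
The plan is to prove the formula first at the level of the regularized functionals $\varphi^\eps_\lambda$, where the classical finite-dimensional-type It\^o formula in the Hilbert-triplet setting (as in \cite{dapratozab} or \cite{Pard}) does apply since $(\varphi^\eps_\lambda)^* \in C^2(V^*)$ with $D(\varphi^\eps_\lambda)^* = (A^\eps_\lambda)^{-1}$ mapping $H$ into $V_0$ and with bounded second derivative (Lemma \ref{lem:F_lam}), and then to pass to the limit $\lambda\searrow 0$ followed by $\eps\searrow 0$. Concretely, I would introduce the approximating equation: since $y\in A(x)$, for fixed $\eps$ write $x^\eps:=(A^\eps)^{-1}(y)$ and $x^\eps_\lambda:=(A^\eps_\lambda)^{-1}(y)$, and observe that $y$ itself solves the same SDE but we apply the It\^o formula to $(\varphi^\eps_\lambda)^*$ evaluated along the process $y$. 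This yields, $\P$-a.s.\ for all $t$,
\begin{align*}
&(\varphi^\eps_\lambda)^*(y(t)) + \int_0^t \ip{w(s)}{(A^\eps_\lambda)^{-1}(y(s))}_V\,ds\\
&\quad = (\varphi^\eps_\lambda)^*(y_0) + \int_0^t \left((A^\eps_\lambda)^{-1}(y(s)),C(s)\right)_H\,dW(s)\\
&\qquad + \frac12\int_0^t \operatorname{Tr}\!\left(C^*(s)\,D((A^\eps_\lambda)^{-1})(y(s))\,C(s)\right)ds\,.
\end{align*}
Here the duality pairing term is legitimate because $(A^\eps_\lambda)^{-1}(y(s))\in V$ and $w(s)\in V^*$, and the trace term is finite since $D((A^\eps_\lambda)^{-1})$ is locally bounded in $\cL(H,H)$ and $C\in L^2(0,T;\cL^2(U,H))$.

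Next I would pass to the limit $\lambda\searrow 0$ with $\eps$ fixed, using Lemma \ref{lem:as_lam}. For the left-hand side, $(\varphi^\eps_\lambda)^*(y) = \frac{?}{}$ — more precisely one checks $(\varphi^\eps_\lambda)^* \searrow (\varphi^\eps)^*$ pointwise (Moreau–Yosida-type monotone convergence of conjugates as $\lambda\searrow0$), so $(\varphi^\eps_\lambda)^*(y(t))\to (\varphi^\eps)^*(y(t))$ and likewise at $y_0$. The key point for the pairing term is that $(A^\eps_\lambda)^{-1}(y(s))\to (A^\eps)^{-1}(y(s))$ in $H$ and is bounded in $V$ (from the first two convergences in Lemma \ref{lem:as_lam} and the a priori estimate there), hence converges weakly in $V$ to $(A^\eps)^{-1}(y(s))$, which suffices to pass to the limit in $\int_0^t\ip{w(s)}{\cdot}_V\,ds$ after a dominated-convergence argument in $(\omega,s)$. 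For the It\^o integral one uses the $H$-convergence plus uniform bounds to pass to the limit in $L^2(\Omega)$ (or along a subsequence a.s.). For the trace term, Lemma \ref{lem:as_lam} gives $D((A^\eps_\lambda)^{-1})(y(s))\wto D((A^\eps)^{-1})(y(s))$ in $\cL_w(H,H)$; combined with $C(s)\in\cL^2(U,H)$ (a fixed compact-type operator, so $C(s)e_k\to 0$ in $H$), weak operator convergence is enough to pass to the limit in $\operatorname{Tr}(C^*D((A^\eps_\lambda)^{-1})(y)C) = \sum_k (D((A^\eps_\lambda)^{-1})(y)C e_k, Ce_k)_H$ by dominated convergence over $k$, since $\|D((A^\eps_\lambda)^{-1})(y)\|_{\cL(H,H)}$ is bounded uniformly in $\lambda$. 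This produces the formula with $\varphi^\eps$ in place of $\varphi$ and $(A^\eps)^{-1}$, i.e.\ $D((A^\eps)^{-1})(y)=\eps I + D(A^{-1})(y)$, in place of $D(A^{-1})$.

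Finally I would send $\eps\searrow 0$. Using (P2), $(\varphi^\eps)^*(y) = \frac\eps2\|y\|_H^2 + \varphi^*(y)$, so the $\eps$-terms in the left-hand side vanish and $(\varphi^\eps)^*(y(t))\to\varphi^*(y(t))$ (here one uses $y\in L^\infty(0,T;H)$ so $\|y(t)\|_H^2<\infty$ a.s.). For the pairing term, the natural choice is to replace $(A^\eps)^{-1}(y(s))$ by $x(s)$: indeed since $y(s)\in A(x(s))$ one has $x(s) = (A^\eps)^{-1}(y(s)+\eps \,\text{(something in }A x(s)))$ — more cleanly, $(A^\eps)^{-1}(y(s)) = J^\eps_A(x(s))\to x(s)$ in $H$ by the standard Yosida-resolvent convergence, and boundedness in $V$ (from $x\in L^2(0,T;V)$ and the contraction property of $J^\eps_A$ on $V$, using the hypothesis $(A^\eps x, Rx)_H\ge0$ from (H1)) gives weak convergence in $V$, enough to pass to the limit in $\int_0^t\ip{w(s)}{\cdot}_V\,ds$. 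The It\^o integral converges by $H$-convergence and uniform $L^2$ bounds. For the trace term, $D((A^\eps)^{-1})(y(s)) = \eps I + D(A^{-1})(y(s))\to D(A^{-1})(y(s))$ in $\cL_w(H,H)$ trivially (the $\eps I$ part contributes $\eps\operatorname{Tr}(C^*C)=\eps\|C\|^2_{\cL^2(U,H)}\to0$), so the same weak-operator/dominated-convergence argument as before yields $\operatorname{Tr}(C^*D(A^{-1})(y)C)$. The main obstacle — and the reason the full paper devotes a whole section to this — is making the passages to the limit in the pairing term $\int_0^t\ip{w(s)}{(A^\eps_\lambda)^{-1}(y(s))}_V\,ds$ rigorous: $w$ is only $L^2(0,T;V^*)$-valued without uniform control, $(A^\eps_\lambda)^{-1}(y(s))$ converges only weakly in $V$, and one must justify the measurability and integrability needed to apply dominated convergence in $\Omega\times(0,T)$, as well as handle the fact that the identities hold only for a.e.\ $(\omega,s)$ while the final statement is required for every $t$, which forces a continuity/localization argument (stopping times) to upgrade the convergence to hold for all $t$ simultaneously $\P$-a.s.
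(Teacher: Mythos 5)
Your overall strategy --- regularize, apply the classical It\^o formula to $(\varphi^\eps_\lambda)^*$ along the process $y$, then pass to the limit --- is indeed the paper's, but your decomposition of the limit into two successive passages ($\lambda\searrow0$ at fixed $\eps$, then $\eps\searrow0$) breaks down at the duality term, and this is the crux of the proposition rather than a technicality. After the first limit you would need an intermediate identity of the form
\[
(\varphi^\eps)^*(y(t)) + \int_0^t\ip{w(s)}{(A^\eps)^{-1}(y(s))}_V\,ds = \cdots
\]
But $(A^\eps)^{-1}(y)=\eps y + A^{-1}(y)=\eps y + x$, and since $y$ is only $H$-valued this is \emph{not} an element of $V$, so the pairing with $w\in V^*$ is meaningless. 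Correspondingly, your claim that $(A^\eps_\lambda)^{-1}(y(s))$ is ``bounded in $V$'' uniformly in $\lambda$ is not backed by Lemma~\ref{lem:as_lam}, which only yields $\lambda^{1/2}\norm{(A^\eps_\lambda)^{-1}(y)}_V\leq M$; indeed no uniform $V$-bound can hold, precisely because the $H$-limit $\eps y+x$ lies outside $V$ in general. (Your identification $(A^\eps)^{-1}(y(s))=J^\eps_A(x(s))$ is also incorrect: for $y\in A(x)$ one has $(A^\eps)^{-1}(y)=\eps y+x$, whereas $J^\eps_A(x)=x-\eps A^\eps(x)$.)

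The paper circumvents exactly this obstruction by taking the \emph{diagonal} limit $\eps=\lambda\searrow0$: it applies It\^o's formula to $\tilde\varphi^*_\lambda=(\varphi^\lambda_\lambda)^*$ and invokes the last part of Lemma~\ref{lem:as_lam'}, which shows that $\tilde A_\lambda^{-1}(y)\to x$ \emph{strongly in $V$} whenever $x=A^{-1}(y)\in V$. That argument hinges on the matched-parameter identity $(A^\lambda)^{-1}(y)=\lambda y+x$, which makes the residual term vanish after division by $\lambda$; with $\eps$ fixed, the analogous term is of order $\eps/\lambda\to\infty$. So the single-parameter coupling is essential, not a stylistic choice. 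The remaining ingredients of your outline (convergence of the conjugates, the weak-operator plus dominated-convergence treatment of the trace term, the stochastic integral via $H$-convergence) do match the paper's. The two-step structure you describe is what the paper uses at the level of the \emph{equation} in Section~\ref{sec:exist}, but there the intermediate processes satisfy $y^\eps=A^\eps(x^\eps)$ with $x^\eps$ genuinely $V$-valued (cf.\ Proposition~\ref{prop:ito_eps}), which is not the situation here.
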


  Since $\partial\varphi^*=A^{-1}:H\to H$ is Lipschitz-continuous, we have in particular 
  that $H = D(\varphi^*)$: since $y\in C_w([0,T]; H)$ and $y_0$ is $H$-valued,
  the first terms on the left and right hand side in It\^o's formula are finite for every $t$.
  Moreover, 
  since $x=A^{-1}(y)$ and $A^{-1}$
  is Lipschitz-continuous, we also have 
  $x\in L^0(\Omega; L^\infty(0,T; H))$, hence
  $(x,C)_H\in L^2(0,T; \cL^2(U,\erre))$ and also the 
  stochastic integral on the right-hand side is well-defined.
  Finally, the trace term is also well-defined since $D(A^{-1})$ is
  bounded by assumption {(H3)}. 

\begin{proof}[Proof of Proposition \emph{\ref{prop:ito}}]
 For every $\lambda>0$
 by Lemma~\ref{lem:F_lam} we can apply 
 the classical It\^o's formula to 
 the function $\tilde\varphi_\lambda^*$,
 where $\tilde\varphi_\lambda:=\varphi_\lambda^\lambda$, getting
 \begin{align*}
    &\tilde \varphi_\lambda^*(y(t)) + \int_0^t\ip{w(s)}{\tilde A_\lambda^{-1}y(s)}_V\,ds\\
    &\quad=\tilde \varphi_\lambda^*(y_0) + \int_0^t\left(\tilde A_\lambda^{-1}y(s), C(s)\right)_H\,dW(s)
    +\frac12\int_0^t\operatorname{Tr}\left(C^*(s)D(\tilde A_\lambda^{-1})(y(s))
    C(s)\right)\,ds
  \end{align*}
  for every $t\in[0,T]$, $\P$-almost surely.
  By Lemma~\ref{lem:as_lam'} we have that 
  \begin{align*}
    &\tilde A_\lambda^{-1}y\to x \quad\text{in } L^2(0,T; V)\,,\\
    &D(\tilde A_\lambda^{-1})(y(s))\wto D(A^{-1})(y(s)) \quad\text{in } \cL_w(H,H) \quad\forall\,s\in[0,T]\,.
  \end{align*}
  Consequently, we have 
  \[
  \int_0^t\ip{w(s)}{\tilde A_\lambda^{-1}y(s)}_V\,ds \to 
  \int_0^t\ip{w(s)}{x(s)}_V\,ds
  \] 
  and, by the dominated convergence theorem, 
  \[
  \int_0^t\operatorname{Tr}\left(C^*(s)D(\tilde A_\lambda^{-1})(y(s))C(s)\right)\,ds\to
  \int_0^t\operatorname{Tr}\left(C^*(s)D(A^{-1})(y(s))C(s)\right)\,ds\,.
  \]
  Moreover, since $\partial\tilde \varphi_\lambda=\tilde A_\lambda$
  and $\partial\varphi^\lambda=A^\lambda$, for every $t\in[0,T]$
  we have
  \begin{align*}
  &\tilde \varphi_\lambda^*(y(t))=\left(v(t), \tilde A_\lambda^{-1}y(t)\right)_H 
  - \tilde \varphi_\lambda(\tilde A_\lambda^{-1}y(t))\\
  &\quad=\left(y(t), \tilde A_\lambda^{-1}y(t)\right)_H 
  - \frac\lambda2\norm{\tilde A_\lambda^{-1}y(t)}_V^2 - \varphi^\lambda(\tilde A_\lambda^{-1}y(t))\\
  &\quad=\left(y(t), \tilde A_\lambda^{-1}y(t)\right)_H 
  - \frac\lambda2\norm{\tilde A_\lambda^{-1}y(t)}_V^2\\
  &\qquad-\left(A^\lambda(\tilde A_\lambda^{-1}(y(t))), \tilde
    A_\lambda^{-1}(y(t))\right)_{H}
  + (\varphi^\lambda)^*\left(A^\lambda(\tilde A_\lambda^{-1}(y(t)))\right)\\
  &\quad=\left(y(t), \tilde A_\lambda^{-1}y(t)\right)_H 
  - \frac\lambda2\norm{\tilde A_\lambda^{-1}y(t)}_V^2
  -\left(A^\lambda(\tilde A_\lambda^{-1}(y(t))), \tilde
    A_\lambda^{-1}(y(t))\right)_{H} \\
  &\qquad+\frac\lambda2\norm{A^\lambda(\tilde A_\lambda^{-1}(y(t)))}_H^2
  +\varphi^*\left(A^\lambda(\tilde A_\lambda^{-1}(y(t)))\right)\,.
  \end{align*}
  Now, by Lemma~\ref{lem:as_lam'} we have that 
  \[
  \tilde A_\lambda^{-1}(y(t))\to x(t) \quad\text{in } H\,, \qquad
  A^\lambda(\tilde A_\lambda^{-1}(y(t))) \to y(t) \quad\text{in } H\,, \qquad
  \norm{\tilde A_\lambda^{-1}(y(t))}_V\leq M
  \]
  for a constant $M$ independent of $\lambda$. 
  Hence, as $\lambda\searrow0$ we get
  \[
  \tilde \varphi_\lambda^*(y(t)) \to
  \varphi^*(y(t))\,.
  \]
  Similarly, the same argument and the fact that $x_0=A^{-1}(y_0)\in V$ yields
  \[
  \tilde \varphi_\lambda^*(y_0)\to\varphi^*(y_0)\,.
  \]
  Finally, let us show the convergence of the stochastic integrals: note that
  \[
  \norm{((\tilde A^{-1}y-x),C)_H}^2_{L^2(0,T; \cL^2(U,\erre))}\leq
  \int_0^T\norm{\tilde A^{-1}y(s)-x(s)}_H^2\norm{C(s)}_{\cL^2(U,H)}^2\,ds
  \]
  where the integrand converges pointwise to $0$ in $[0,T]$.
  Moreover, from the proof of Lemma~\ref{lem:as_lam'} we infer that 
  \[
  \norm{\tilde A^{-1}y-x}_H^2\norm{C}_{\cL^2(U,H)}^2 \leq M
  \norm{C}^2_{\cL^2(U,H)}\left(\norm{y}_H^2+\norm{x}_H^2\right)
  \]
  for a positive constant $M$ only dependent on $c_A$.
  Since $x=A^{-1}(y)$ and $A^{-1}$ is Lipschitz-continuous,
  recalling that $y\in L^\infty(0,T; H)$ we deduce that also $x\in L^\infty(0,T; H)$,
  hence the dominated convergence theorem yields
  \[
  \norm{((\tilde A^{-1}y-x),C)_H}^2_{L^2(0,T; \cL^2(U,\erre))}\to 0
  \]
  $\P$-almost surely. We deduce that 
  \[
  \int_0^t\left(\tilde A_\lambda^{-1}y(s), C(s)\right)_H\,dW(s) \to 
  \int_0^t\left(x(s), C(s)\right)_H\,dW(s)
  \]
  in probability. Hence, the thesis follows letting $\lambda\searrow0$.
\end{proof}

The following result follows using exactly the same proof
with $A^\eps$ instead of $A$.
\begin{prop}[Generalized It\^o's formula for $A^\eps$]
  \label{prop:ito_eps}
  Let $\eps\in(0,c_A^{-1})$.
  Assume that $x^\eps$, $y^\eps$, $w^\eps$ 
  are progressively measurable processes with values in $V$, $H$, and $V^*$,
  respectively, such that
  \begin{align*}
  &x^\eps \in L^0(\Omega; L^2(0,T; V))\,, \\
  &y^\eps\in L^0(\Omega; L^\infty(0,T; H)\cap C^0([0,T]; V^*))\,,
  \\
  &w^\eps\in L^0(\Omega; L^2(0,T; V^*))\,,\\
  &y^\eps= A^\eps(x^\eps) \quad\text{a.e.~in } \Omega\times(0,T)\,,\\
  &C\in L^0(\Omega; L^2(0,T; \cL^2(U,H)))\,,\\
  &y_0^\eps\in L^0(\Omega, \cF_0; H)\,, \quad
  x_0^\eps:=(A^\eps)^{-1}(v_0^\eps)\in L^2(\Omega,\cF_0; V)\,,\\
  &y^\eps(t)+\int_0^tw^\eps(s)\,ds = y^\eps_0  + 
  \int_0^tC(s)\,dW(s) \quad\text{in } V^* \quad\forall\,t\in[0,T]\,,\quad\P\text{-a.s.}
  \end{align*}
  Then, for every $t\in[0,T]$, $\P$-almost surely,
  \begin{align*}
  &(\varphi^\eps)^*(y^\eps(t)) + \int_0^t\ip{w^\eps(s)}{x^\eps(s)}_V\,ds \\
  &\quad= (\varphi^\eps)^*(y_0^\eps)
  +\int_0^t\left(x^\eps(s), C(s)\right)_H\,dW(s)
  +\frac12\int_0^t\operatorname{Tr}\left(C^*(s)D((A^\eps)^{-1})(y^\eps(s))C(s)\right)\,ds\,.
  \end{align*}
\end{prop}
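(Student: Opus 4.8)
The plan is to run the argument of Proposition~\ref{prop:ito} essentially verbatim, with $A$ replaced throughout by the fixed operator $A^\eps$ (so that $\varphi$ becomes $\varphi^\eps$, $\varphi^*$ becomes $(\varphi^\eps)^*$, the operator $\tilde A_\lambda=A_\lambda^\lambda$ becomes $A_\lambda^\eps$, and $\tilde\varphi_\lambda=\varphi_\lambda^\lambda$ becomes $\varphi_\lambda^\eps$), the only change being that the $\lambda\searrow0$ asymptotics are now supplied by Lemma~\ref{lem:as_lam} (here $\lambda$ is the regularization parameter and $\eps$ stays fixed) instead of by Lemma~\ref{lem:as_lam'}. \emph{Step 1.} By Lemma~\ref{lem:F_lam}, for every $\lambda>0$ one has $(\varphi_\lambda^\eps)^*\in C^2(V^*)$ with $D(\varphi_\lambda^\eps)^*=(A_\lambda^\eps)^{-1}$ and $D^2(\varphi_\lambda^\eps)^*$ locally bounded on $V^*$, so the classical It\^o formula of \cite{dapratozab,Pard} applies to $t\mapsto(\varphi_\lambda^\eps)^*(y^\eps(t))$ along the given equation for $y^\eps$, yielding the asserted identity with $x_\lambda^\eps:=(A_\lambda^\eps)^{-1}(y^\eps)$, $D((A_\lambda^\eps)^{-1})(y^\eps)$ and $(\varphi_\lambda^\eps)^*$ in place of $x^\eps$, $D((A^\eps)^{-1})(y^\eps)$ and $(\varphi^\eps)^*$.

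\emph{Step 2 (passage to the limit $\lambda\searrow0$).} Since $\eps$ is fixed, Lemma~\ref{lem:as_lam} gives, for a.e.\ $(\omega,s)$, that $x_\lambda^\eps\to x^\eps:=(A^\eps)^{-1}(y^\eps)$ in $H$, $A^\eps(x_\lambda^\eps)\to y^\eps$ in $H$, and $D((A_\lambda^\eps)^{-1})(y^\eps)\wto D((A^\eps)^{-1})(y^\eps)$ in $\cL_w(H,H)$. In addition I need $x_\lambda^\eps\to x^\eps$ strongly in $L^2(0,T;V)$: testing $\lambda Rx_\lambda^\eps+A^\eps(x_\lambda^\eps)=y^\eps$ against $x_\lambda^\eps$, discarding the nonnegative term $(A^\eps(x_\lambda^\eps)-y^\eps,x_\lambda^\eps-x^\eps)_H$ by monotonicity of $A^\eps$ and using $y^\eps-A^\eps(x_\lambda^\eps)=\lambda Rx_\lambda^\eps$, one obtains $\norm{x_\lambda^\eps}_V\le\norm{x^\eps}_V$; combined with the $H$-convergence and uniform convexity of $V$ this gives $x_\lambda^\eps\to x^\eps$ in $V$ a.e., hence in $L^2(0,T;V)$ by dominated convergence, using the hypothesis $x^\eps\in L^0(\Omega;L^2(0,T;V))$. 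Consequently $\int_0^t\ip{w^\eps(s)}{x_\lambda^\eps(s)}_V\,ds\to\int_0^t\ip{w^\eps(s)}{x^\eps(s)}_V\,ds$; the trace integral converges by dominated convergence, thanks to the weak operator convergence of the differentials together with the uniform bound $\norm{D((A_\lambda^\eps)^{-1})(y^\eps(s))}_{\cL(H,H)}\le 2/c_A$ (inherited from the $2/c_A$-Lipschitz continuity of $(A_\lambda^\eps)^{-1}\colon H\to H$, valid uniformly in $\lambda$ by the strong monotonicity (P1)); and the stochastic integral converges in probability exactly as in Proposition~\ref{prop:ito}, using $x_\lambda^\eps\to x^\eps$ in $H$ pointwise, the a priori bound on $\norm{x_\lambda^\eps-x^\eps}_H$ from the same testing, and $x^\eps\in L^0(\Omega;L^\infty(0,T;H))$ (which follows since $x^\eps=(A^\eps)^{-1}(y^\eps)$, $(A^\eps)^{-1}$ is Lipschitz, and $y^\eps\in L^\infty(0,T;H)$).

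\emph{Step 3 (boundary terms and conclusion).} It remains to show $(\varphi_\lambda^\eps)^*(y^\eps(t))\to(\varphi^\eps)^*(y^\eps(t))$, and likewise at $t=0$. By the Fenchel identity together with (P2) one has $(\varphi_\lambda^\eps)^*(y^\eps(t))=(y^\eps(t),x_\lambda^\eps(t))_H-\frac\lambda2\norm{x_\lambda^\eps(t)}_V^2-(A^\eps x_\lambda^\eps(t),x_\lambda^\eps(t))_H+\frac\eps2\norm{A^\eps x_\lambda^\eps(t)}_H^2+\varphi^*(A^\eps x_\lambda^\eps(t))$; since $x_\lambda^\eps(t)\to x^\eps(t)$ and $A^\eps(x_\lambda^\eps(t))\to y^\eps(t)$ in $H$, $\lambda\norm{x_\lambda^\eps(t)}_V^2\to0$, and $\frac\eps2\norm{\cdot}_H^2+\varphi^*$ is continuous on $H=D(\varphi^*)$, the first and third terms cancel in the limit and one is left with $\frac\eps2\norm{y^\eps(t)}_H^2+\varphi^*(y^\eps(t))=(\varphi^\eps)^*(y^\eps(t))$; the same computation at $t=0$ uses $x_0^\eps=(A^\eps)^{-1}(y_0^\eps)\in L^2(\Omega,\cF_0;V)$. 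Passing to the limit $\lambda\searrow0$ in the identity of Step~1 then yields the stated formula.

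\emph{Main obstacle.} No genuinely new difficulty appears beyond those already overcome in Proposition~\ref{prop:ito}; the single point requiring care is the strong $L^2(0,T;V)$-convergence $x_\lambda^\eps\to x^\eps$, which is \emph{not} part of the statement of Lemma~\ref{lem:as_lam} and must be produced by the testing argument of Step~2 — this is precisely where the hypothesis $x^\eps\in L^0(\Omega;L^2(0,T;V))$ is used, and it is the $A^\eps$-analogue of the final clause (``\,$x\in V$ implies $x_\lambda\to x$ in $V$\,'') of Lemma~\ref{lem:as_lam'}.
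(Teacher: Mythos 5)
Your proposal is correct and follows essentially the same route as the paper, whose entire proof of Proposition~\ref{prop:ito_eps} is the remark that the argument of Proposition~\ref{prop:ito} goes through verbatim with $A^\eps$ in place of $A$. You carry this out faithfully, correctly substituting Lemma~\ref{lem:as_lam} for Lemma~\ref{lem:as_lam'} and supplying (via the same testing trick as in the final part of Lemma~\ref{lem:as_lam'}) the strong $V$-convergence of $x_\lambda^\eps$ that the paper uses implicitly.
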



\section{Existence of martingale solutions: Proof of
  Theorem \ref{th:1}}
\label{sec:exist}
 
We introduce two separate approximations on the problem, 
depending on two different parameters $\lambda,\eps>0$,
prove uniform estimates on the regularized solutions
(Subsection \ref{s:1}), and 
pass to the limit as $\lambda\searrow0$ first
(Subsection \ref{s:2}) and then as $\eps\searrow0$
(Subsection \ref{s:3}).

\subsection{The approximated problem}\label{s:1}
Let $\eps\in(0,c_A^{-1})$ be fixed.
For every $\lambda>0$, we consider the approximated problem
\[
  d(A_\lambda^\eps u_\lambda^\eps) + B_\lambda u^\eps_\lambda \,dt =
  F(t,u_\lambda^\eps)\,dt + G(t,u_\lambda^\eps)\,dW\,, 
  \qquad A_\lambda^\eps u_\lambda^\eps(0)=v_0^\eps\,,
\]
where $B_\lambda:V\to V^*$ is the Yosida approximation of $B$
and $v_0^\eps:=(I+\eps R)^{-1}(v_0)$. 
We recall that since $V$ is a Hilbert space
then $R$ is linear and $B_\lambda$ is Lipschitz-continuous.
Setting $v_\lambda^\eps:=A_\lambda^\eps u_\lambda$, the approximated problem can be written in terms
of $v_\lambda^\eps$ as an evolution equation on $V^*$ of the form
\[
  d v_\lambda^\eps + B_\lambda (A_\lambda^\eps)^{-1}v^\eps_\lambda\,dt = 
  F(t,(A_\lambda^\eps)^{-1}v_\lambda^\eps)\,dt + 
  G(t,(A_\lambda^\eps)^{-1}v_\lambda^\eps)\,dW\,, \qquad v^\eps_\lambda(0)=v_0^\eps\,.
\]
Since $(A_\lambda^\eps)^{-1}:V^*\to V$
is Lipschitz-continuous, the operator 
$B_\lambda\circ(A_\lambda^\eps)^{-1}:V^*\to V^*$ 
is composition of 
Lipschitz-continuous operators, and there is a unique strong solution
\[
  v_\lambda^\eps \in L^q(\Omega; C^0([0,T]; V^*))
\]
such that, setting $u^\eps_\lambda:=(A_\lambda^\eps)^{-1}v^\eps_\lambda
 \in L^q(\Omega; C^0([0,T]; V))$, we have
\[
  v^\eps_\lambda(t) + \int_0^tB_\lambda u^\eps_\lambda(s)\,ds = v_0 +
  \int_0^tF(s,u_\lambda^\eps(s))\,ds + \int_0^t G(s, u_\lambda^\eps(s))\,dW(s) \quad\text{in } V^*
\]
for every $t\in[0,T]$, $\P$-almost surely. We now prove a priori
estimates, independent of $\lambda$ and $\eps$.

\begin{lem}[A priori estimates]\label{lem:est}
  Let $\eta\in(0,1/2)$. Then there exists a positive constant $M>0$ such that, 
  for any $\lambda>0$
  and $\eps\in(0,c_A^{-1})$,
  \begin{align*}
  &\lambda^{1/2}\norm{u_\lambda^\eps}_{L^q(\Omega; C^0([0,T]; V))} + 
  \eps^{1/2}\norm{A^\eps(u^\eps_\lambda)}_{L^q(\Omega; C^0([0,T]; H))}
    \leq M\,,\\
  &\norm{u_\lambda^\eps}_{L^q(\Omega; C^0([0,T]; H))} +
  \norm{A^\eps(u_\lambda^\eps)}_{L^q(\Omega; C^0([0,T]; H))} + 
  \norm{\varphi^*(A^\eps(u^\eps_\lambda))}_{L^{q/2}(\Omega;
    L^\infty(0,T))} \leq M\,,\\
  &\norm{J^B_\lambda u^\eps_\lambda}_{L^q(\Omega; L^2(0,T; V))} 
  + \norm{B_\lambda u^\eps_\lambda}_{L^q(\Omega; L^2(0,T; V^*))} \leq M\,,\\
  &\norm{G(\cdot, u_\lambda^\eps)}_{L^q(\Omega; C^0([0,T]; \cL^2(U,H)))}\leq M\,,\\
  &\norm{G(\cdot, u_\lambda^\eps)\cdot W}_{L^q(\Omega; W^{\eta,q}(0,T; H))}\leq M\,,\\
  &\norm{v^\eps_\lambda-G(\cdot, u_\lambda^\eps)\cdot W}_{L^q(\Omega; H^1(0,T; V^*))}
  \leq M\,,\\
  &\lambda\norm{Ru^\eps_\lambda}_{L^q(\Omega; W^{\eta,q}(0,T; V^*))} 
  + \norm{A^\eps(u^\eps_\lambda)}_{L^q(\Omega; W^{\eta,q}(0,T; V^*))} \leq M\,,
  \end{align*}
  where $J^B_\lambda:=(R+\lambda B)^{-1}\circ R:V\to V$ is the resolvent of $B$.
\end{lem}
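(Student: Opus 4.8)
The plan is to obtain every estimate from one It\^o energy balance for the regularized process $v_\lambda^\eps=A_\lambda^\eps u_\lambda^\eps$, and then to read the remaining bounds off the equation. Since $(\varphi_\lambda^\eps)^*\in C^2(V^*)$ with $D(\varphi_\lambda^\eps)^*=(A_\lambda^\eps)^{-1}$ and $D^2(\varphi_\lambda^\eps)^*$ bounded on $V^*$ by Lemma~\ref{lem:F_lam}, and since $v_\lambda^\eps$ is a genuine $V^*$-valued It\^o process with drift $F(\cdot,u_\lambda^\eps)-B_\lambda u_\lambda^\eps\in L^2(0,T;V^*)$ and diffusion $G(\cdot,u_\lambda^\eps)\in L^2(0,T;\cL^2(U,H))$, the classical It\^o formula applies (as in the proof of Proposition~\ref{prop:ito}, now with independent parameters $\eps,\lambda$) and gives, writing $G(s):=G(s,u_\lambda^\eps(s))$, $\P$-a.s.\ for every $t\in[0,T]$,
\begin{align*}
&(\varphi_\lambda^\eps)^*(v_\lambda^\eps(t)) + \int_0^t\ip{B_\lambda u_\lambda^\eps(s)}{u_\lambda^\eps(s)}_V\,ds = (\varphi_\lambda^\eps)^*(v_0^\eps) + \int_0^t\left(u_\lambda^\eps(s),F(s,u_\lambda^\eps(s))\right)_H\,ds\\
&\qquad{}+ \int_0^t\left(u_\lambda^\eps(s),G(s)\right)_H\,dW(s) + \frac12\int_0^t\operatorname{Tr}\left(G^*(s)\,D((A_\lambda^\eps)^{-1})(v_\lambda^\eps(s))\,G(s)\right)\,ds\,.
\end{align*}

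Next I would bound both sides with constants independent of $\eps$ and $\lambda$. On the left, the Legendre identity together with (P2) gives $(\varphi_\lambda^\eps)^*(v_\lambda^\eps(t))=\tfrac\lambda2\norm{u_\lambda^\eps(t)}_V^2+\tfrac\eps2\norm{A^\eps(u_\lambda^\eps(t))}_H^2+\varphi^*(A^\eps(u_\lambda^\eps(t)))$; since $A=\partial\varphi$ is linearly bounded by (H1), $\varphi$ has at most quadratic growth, hence $\varphi^*$ has at least quadratic growth, $\varphi^*(y)\ge c_1\norm{y}_H^2-c_2$ for structural constants $c_1,c_2>0$; moreover $\ip{B_\lambda u_\lambda^\eps}{u_\lambda^\eps}_V\ge c_B\norm{J^B_\lambda u_\lambda^\eps}_V^2$ by (H5) and the definition of the Yosida approximation. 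On the right, testing $\lambda Rx+A^\eps x=y$ and using (P1) shows $(A_\lambda^\eps)^{-1}\colon H\to H$ is $2/c_A$-Lipschitz uniformly in $\eps,\lambda$, hence $\norm{D((A_\lambda^\eps)^{-1})(v_\lambda^\eps)}_{\cL(H,H)}\le 2/c_A$ and the trace term is at most $\tfrac1{c_A}\norm{G(\cdot,u_\lambda^\eps)}_{\cL^2(U,H)}^2\le C(1+\norm{u_\lambda^\eps}_H^2)$ via (H7); the drift pairing is at most $C\bigl(1+\norm{F(\cdot,0)}_{L^2(0,T;H)}^2+\int_0^t\norm{u_\lambda^\eps(s)}_H^2\,ds\bigr)$ via (H6); and $(\varphi_\lambda^\eps)^*(v_0^\eps)\le(\varphi^\eps)^*(v_0^\eps)=\tfrac\eps2\norm{v_0^\eps}_H^2+\varphi^*(v_0^\eps)$, bounded in $L^{q/2}(\Omega)$ uniformly by (H8) and the contraction properties of $v_0^\eps=(I+\eps R)^{-1}v_0$. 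I would then raise to the power $q/2$, take $\sup_{[0,t]}$ and expectations, control the martingale term by Burkholder--Davis--Gundy (its quadratic variation being $\int\norm{u_\lambda^\eps}_H^2\norm{G(\cdot,u_\lambda^\eps)}_{\cL^2(U,H)}^2$), and absorb a small multiple of $\E\sup_{[0,t]}\norm{u_\lambda^\eps}_H^q$ on the left --- legitimate because $\norm{u_\lambda^\eps}_H\le(\eps+c_A^{-1})\norm{A^\eps(u_\lambda^\eps)}_H$ by the identity $u_\lambda^\eps=\eps A^\eps(u_\lambda^\eps)+A^{-1}(A^\eps(u_\lambda^\eps))$, (P1) and $\eps<c_A^{-1}$. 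A Gr\"onwall argument then delivers the first three displayed lines of the statement (the $J^B_\lambda$ bound coming from the coercivity term); the $B_\lambda u_\lambda^\eps$ bound follows from the linear growth of (H5) applied to $B_\lambda u_\lambda^\eps\in B(J^B_\lambda u_\lambda^\eps)$, the $G(\cdot,u_\lambda^\eps)$ bound from (H7) and the $u_\lambda^\eps$ bound in $L^q(\Omega;C^0([0,T];H))$, and the $\varphi^*(A^\eps(u_\lambda^\eps))$ bound since this term is dominated from above by $(\varphi_\lambda^\eps)^*(v_\lambda^\eps)$ and bounded below by $-\varphi(0)$.

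For the last three, regularity-type bounds I would first invoke the standard estimate on stochastic convolutions (see e.g.\ \cite{dapratozab}): since $G(\cdot,u_\lambda^\eps)$ is bounded in $L^q(\Omega;L^q(0,T;\cL^2(U,H)))$ by the previous step, the process $t\mapsto\int_0^tG(s)\,dW(s)$ is bounded in $L^q(\Omega;W^{\eta,q}(0,T;H))$ for $\eta\in(0,1/2)$. Subtracting it from the equation, $v_\lambda^\eps-G(\cdot,u_\lambda^\eps)\cdot W=v_0^\eps+\int_0^\cdot\bigl(F(s,u_\lambda^\eps)-B_\lambda u_\lambda^\eps(s)\bigr)\,ds$ is absolutely continuous with derivative bounded in $L^q(\Omega;L^2(0,T;V^*))$ by (H6) and the $B_\lambda$ bound, hence bounded in $L^q(\Omega;H^1(0,T;V^*))$; using $H^1(0,T;V^*)\embed W^{\eta,q}(0,T;V^*)$ and $W^{\eta,q}(0,T;H)\embed W^{\eta,q}(0,T;V^*)$, the process $v_\lambda^\eps$ is bounded in $L^q(\Omega;W^{\eta,q}(0,T;V^*))$. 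Finally, testing $A_\lambda^\eps x=y$ by $x$ in the $V$-duality shows $(A_\lambda^\eps)^{-1}\colon V^*\to V$ is $\lambda^{-1}$-Lipschitz, so $\lambda R\circ(A_\lambda^\eps)^{-1}\colon V^*\to V^*$ is Lipschitz with constant $\norm{R}_{\cL(V,V^*)}$ uniformly in $\eps,\lambda$ and maps $0$ to $0$; consequently $\lambda Ru_\lambda^\eps=\lambda R(A_\lambda^\eps)^{-1}v_\lambda^\eps$, and hence also $A^\eps(u_\lambda^\eps)=v_\lambda^\eps-\lambda Ru_\lambda^\eps$, inherit the bound of $v_\lambda^\eps$ in $L^q(\Omega;W^{\eta,q}(0,T;V^*))$, which is the last line.

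The step I expect to be the main obstacle is the closure of the Gr\"onwall estimate with constants \emph{uniform in both parameters}: it hinges on the quadratic lower bound for $\varphi^*$, on the uniform Lipschitz constants of $A^\eps$, of $(A^\eps)^{-1}=\eps I+A^{-1}$ and of $(A_\lambda^\eps)^{-1}$, and on the uniform bound $\norm{D((A_\lambda^\eps)^{-1})}_{\cL(H,H)}\le 2/c_A$ for the It\^o correction, all of which have to be tracked through the Moreau--Yosida and the elliptic regularizations; the clean separation of $\lambda Ru_\lambda^\eps$ and $A^\eps(u_\lambda^\eps)$ in $W^{\eta,q}(0,T;V^*)$ via the uniform Lipschitz bound of $\lambda(A_\lambda^\eps)^{-1}$ is the other point genuinely drawing on the preliminaries of Section~\ref{sec:prelim}.
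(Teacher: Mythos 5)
Your proposal is correct and follows essentially the same route as the paper: the same It\^o formula for $(\varphi_\lambda^\eps)^*(v_\lambda^\eps)$, the same Legendre decomposition with the quadratic lower bound on $\varphi^*$, the same coercivity, trace, and Burkholder--Davis--Gundy estimates closed by absorption and Gronwall, and the same comparison argument through $H^1(0,T;V^*)\embed W^{\eta,q}(0,T;V^*)$ for the regularity bounds. The only cosmetic difference is that you separate $\lambda Ru_\lambda^\eps$ from $v_\lambda^\eps$ via the uniform $1$-Lipschitz bound for $\lambda R\circ(A_\lambda^\eps)^{-1}$ on $V^*$, whereas the paper derives the identical increment inequality directly from the monotonicity of $A^\eps$.
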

\begin{proof}
  By Lemma~\ref{lem:F_lam}, we can apply
  It\^o's formula to $(\varphi^\eps_\lambda)^*(v^\eps_\lambda)$ in $V^*$, getting
  \begin{align*}
    &(\varphi^\eps_\lambda)^*(v^\eps_\lambda(t)) + 
    \int_0^t\ip{B_\lambda u^\eps_\lambda(s)}{u^\eps_\lambda(s)}_V\,ds
    =(\varphi^\eps_\lambda)^*(v_0) 
    + \int_0^t\left(F(s,u_\lambda^\eps(s)),u_\lambda^\eps(s)\right)_H\,ds \\
    &+ \int_0^t(u^\eps_\lambda(s),G(s,u_\lambda^\eps(s)))_H\,dW(s) + 
    \frac12\int_0^t\operatorname{Tr}\left(G^*(s,u_\lambda^\eps(s))
    (\lambda R + DA^\eps(u_\lambda^\eps(s)))^{-1}G(s,u_\lambda^\eps(s))\right)\,ds
  \end{align*}
  for every $t\in[0,T]$, $\P$-almost surely. Let us analyse the different terms separately.
  First of all, by definition of convex conjugate and the fact that 
  $\lambda Ru_\lambda^\eps + A^\eps(u^\eps_\lambda)=v^\eps_\lambda$ we have
  \begin{align*}
  &(\varphi^\eps_\lambda)^*(v^\eps_\lambda)=
  (\varphi^\eps_\lambda)^*(\lambda Ru^\eps_\lambda + A^\eps(u^\eps_\lambda))
  =\ip{\lambda Ru^\eps_\lambda + A^\eps(u^\eps_\lambda)}{u^\eps_\lambda}_V 
  - \varphi^\eps_\lambda(u^\eps_\lambda)\\
  &\quad=\lambda\norm{u^\eps_\lambda}_V^2 + (A^\eps(u^\eps_\lambda), u^\eps_\lambda)_H 
  - \frac\lambda2\norm{u^\eps_\lambda}_V^2 -\varphi^\eps(u^\eps_\lambda)\\
  &\quad=\frac\lambda2\norm{u^\eps_\lambda}_V^2 + (\varphi^\eps)^*(A^\eps(u^\eps_\lambda))
  =\frac\lambda2\norm{u^\eps_\lambda}_V^2 +
  \frac\eps2\norm{A^\eps(u^\eps_\lambda)}_H^2 + \varphi^*(A^\eps(u^\eps_\lambda))\,.
  \end{align*}
  Since $\varphi$ has at most quadratic growth in $H$, there are 
  constants $M', M''>0$, independent of $\lambda$ and $\eps$, such that
  \[
  \varphi^*(A^\eps(u^\eps_\lambda))\geq M'\norm{A^\eps(u_\lambda^\eps)}_H^2 - M''\,.
  \]
  Noting further that $u_\lambda^\eps=\eps A^\eps(u_\lambda^\eps) + A^{-1}(A^\eps(u_\lambda^\eps))$,
  we infer by the Lipschitz-continuity of $A^{-1}$ that 
  \[
  \varphi^*(A^\eps(u^\eps_\lambda))\geq 2M'c_A^{-1}\norm{u_\lambda^\eps}_H^2 - M''\,.
  \]
  Secondly, setting $u^\eps_{0\lambda}:=(\lambda R+A^\eps_{|V})^{-1}v_0^\eps$,
  a similar argument yields
  \[
  (\varphi^\eps_\lambda)^*(v_0)=\frac\lambda2\norm{u^\eps_{0\lambda}}^2_V 
  + (\varphi^\eps)^*(A^\eps(u^\eps_{0\lambda}))\,.
  \]
  Since $\lambda Ru^\eps_{0\lambda} + A^\eps(u^\eps_{0\lambda}) = v_0^\eps$, testing
  by $u^\eps_{0\lambda}$ and using the Young inequality we obtain
  \[
    \lambda\norm{u^\eps_{0\lambda}}_V^2 + \varphi^\eps(u^\eps_{0\lambda}) 
    + (\varphi^\eps)^*(A^\eps(u^\eps_{0\lambda}))
    =(v_0,u^\eps_{0\lambda})_H \leq \varphi^\eps(u^\eps_{0\lambda}) + (\varphi^\eps)^*(v_0^\eps)\,,
  \]
  where, by definition of $\varphi^\eps$,
  \[
  (\varphi^\eps)^*(v_0^\eps)=\frac\eps2\norm{v_0^\eps}_H^2+\varphi^*(v_0^\eps)
  \leq M(1+\norm{v_0^\eps}_H^2)\leq M(1+\norm{v_0}_H^2)\,.
  \]
  Hence,
  \[
  (\varphi^\eps_\lambda)^*(v_0^\eps)=
  \frac\lambda2\norm{u^\eps_{0\lambda}}_V^2 
  + (\varphi^\eps)^*(A^\eps(u^\eps_{0\lambda})) \leq (\varphi^\eps)^*(v_0)
  \leq M(1+\norm{v_0}_H^2) \in L^{q/2}(\Omega)\,.
  \]
  Moreover, hypothesis {\RRR(H7)\EEE} and the Young inequality immediately yield
  \[
  \int_0^t\left(F(s,u_\lambda^\eps(s)),u_\lambda^\eps(s)\right)_H\,ds\leq
  \frac14\norm{F(\cdot,0)}^2_{L^2(0,T; H)} + (L_F^2+1)\int_0^t\norm{u_\lambda^\eps(s)}_H^2\,ds\,,
  \]
  while
  by definition of $J^B_\lambda$ and coercivity of $B$ we have
  \begin{align*}
  &\ip{B_\lambda u^\eps_\lambda}{u^\eps_\lambda}_V=
  \ip{B_\lambda u^\eps_\lambda}{J^B_\lambda u^\eps_\lambda}_V
  +\ip{B_\lambda u^\eps_\lambda}{u^\eps_\lambda-J^B_\lambda u^\eps_\lambda}_V\\
  &\quad=\ip{B_\lambda u^\eps_\lambda}{J^B_\lambda u^\eps_\lambda}_V
  +\lambda\ip{B_\lambda u^\eps_\lambda}{R^{-1}B_\lambda u^\eps_\lambda}_V\geq
  c_B\norm{J^B_\lambda u^\eps_\lambda}_V^2 + \lambda\norm{B_\lambda u^\eps_\lambda}_{V^*}^2
  \end{align*}
  Furthermore, let $h\in H$ be arbitrary and set 
  $k^\eps_\lambda:=(\lambda R + DA^\eps(u^\eps_\lambda))^{-1} h\in V$,
  so that
  \[
  \lambda R k^\eps_\lambda + DA^\eps(u^\eps_\lambda)k^\eps_\lambda = h\,.
  \]
  Testing by $k^\eps_\lambda$ and using the strong
  monotonicity of $A^\eps$ we have
  \[
  \lambda\norm{k^\eps_\lambda}_V^2 + \frac{c_A}2\norm{k^\eps_\lambda}_H^2
   \leq (h,k^\eps_\lambda)_H\leq
  \frac{c_A}{4}\norm{k^\eps_\lambda}_H^2 + \frac1{c_A}\norm{h}_H^2\,.
  \]
  We infer that the following uniform estimate holds:
  \begin{equation}\label{est_der}
  \norm{(\lambda R + DA^\eps(u^\eps_\lambda))^{-1}}_{\cL(H,H)}^2\leq
  \frac4{c_A^2}\,.
  \end{equation}
  Hence, the trace term on the right-hand side of It\^o's formula can be estimated by
  \begin{align*}
  \operatorname{Tr}\left(G^*(\cdot, u_\lambda^\eps)
  (\lambda R + DA^\eps_{|V}(u^\eps_\lambda))^{-1}G(\cdot, u_\lambda^\eps)\right)
  &\leq\frac2{c_A}\norm{G(\cdot, u_\lambda^\eps)}^2_{\cL^2(U,H)}
  \leq\frac{4L_G^2}{c_A}\left(1 + \norm{u_\lambda^\eps}_H^2\right)\,.
  \end{align*}
  Finally, by the Burkholder-Davis-Gundy and Young inequalities we
  have, for every $\delta>0$,
  \begin{align*}
  &\E\sup_{t\in[0,T]}\left|\int_0^t(u^\eps_\lambda(s),G(s,u_\lambda^\eps(s)))_H\,dW(s)\right|^{q/2}
  \leq c
  \E\left(\int_0^T\norm{u^\eps_\lambda(s)}_H^2
  \norm{G(s,u_\lambda^\eps(s))}^2_{\cL^2(U,H)}\,ds\right)^{q/4}\\
  &\quad\leq\E\left(\norm{u^\eps_\lambda}_{L^\infty(0,T; H)}^{q/2}
  \norm{G(\cdot,u_\lambda^\eps)}_{L^2(0,T; \cL^2(U,H))}^{q/2}\right)\\
  &\quad\leq \delta \E\norm{u^\eps_\lambda}^q_{L^\infty(0,T; H)} 
  + \frac{1}{4\delta}\norm{G(\cdot,u_\lambda^\eps(s))}_{L^2(0,T; \cL^2(U,H))}^q\\
  &\quad\leq \delta \E\norm{u^\eps_\lambda}^q_{L^\infty(0,T; H)} 
  + \frac{L_G^2}{\delta}\left(1+\norm{u_\lambda^\eps}_{L^2(0,T;H)}^q\right)\,,
  \end{align*} 
  where $c$ is a positive constant, depending on data only. 
  Hence, recalling that $A$ and $B$ have linear growth, 
  taking supremum in time, power $q/2$ and expectations in It\^o's formula, 
  choosing $\delta$ small enough, rearranging the terms, and 
  employing the Gronwall lemma yield the first three desired
  estimates. Moreover, the fourth and fifth estimates easily
  follow from \RRR(H8) \EEE and the properties of the stochastic integral.

  Let us show now the last two estimates: since 
  $(B_\lambda u^\eps_\lambda)_{\lambda,\eps}$ 
  is uniformly bounded in the space $L^q(\Omega; L^2(0,T; V^*))$, we have
  \[
  \norm{\int_0^\cdot B_\lambda(u^\eps_\lambda(s))\,ds}_{L^q(\Omega; H^1(0,T; V^*))}\leq M
  \]
  for a positive constant $M$ independent of $\lambda$ and $\eps$. 
  Now, since $\eta\in(0,1/2)$ and $q>2$,
  we have that $1-1/2>\eta-1/q$, so
  the Sobolev 
  embeddings imply that 
  $H^1(0,T; V^*)\embed W^{\eta,q}(0,T; V^*)$ continuously, hence
  by comparison in the equation we have
  \[
    \norm{v^\eps_\lambda}_{L^q(\Omega; W^{\eta,q}(0,T; V^*))}\leq M\,.
  \]
  Now, recalling that 
  $v^\eps_\lambda=\lambda Ru^\eps_\lambda + A^\eps(u^\eps_\lambda)$, we have
  \[
  \E\int_0^T\!\int_0^T\frac{\norm{\lambda R(u^\eps_\lambda(s)-u^\eps_\lambda(r)) 
  + A^\eps(u^\eps_\lambda)(s)-A^\eps(u^\eps_\lambda)(r)}_{V^*}^q}
  {|s-r|^{1+\eta q}}\,ds\,dr\leq M\,.
  \]
  Since for almost every $s,r\in(0,T)$ we have, by monotonicity of $A^\eps$,
  \begin{align*}
  &\lambda^2\norm{R(u^\eps_\lambda(s)-u^\eps_\lambda(r))}_{V^*}^2
  =\ip{\lambda R(u^\eps_\lambda(s)-u^\eps_\lambda(r))}
  {\lambda(u^\eps_\lambda(s)-u^\eps_\lambda(r))}_V\\
  &\quad\leq
  \ip{\lambda R(u^\eps_\lambda(s)-u^\eps_\lambda(r)) 
  + A^\eps(u^\eps_\lambda)(s)-A^\eps(u^\eps_\lambda)(r)}{\lambda(u^\eps_\lambda(s)-u^\eps_\lambda(r))}_V\\
  &\quad\leq
  \frac12\norm{\lambda R(u^\eps_\lambda(s)-u^\eps_\lambda(r)) 
  + A^\eps(u^\eps_\lambda)(s)-A^\eps(u^\eps_\lambda)(r)}_{V^*}^2+
  \frac{\lambda^2}{2}\norm{R(u^\eps_\lambda(s)-u^\eps_\lambda(r))}_{V^*}^2\,,
  \end{align*}
  we deduce that $(\lambda Ru^\eps_\lambda)_\lambda$ is 
  uniformly bounded in the space $L^q(\Omega; W^{\eta,q}(0,T; V^*))$, 
  hence also $A^\eps(u^\eps_\lambda)
  =v^\eps_\lambda - \lambda Ru^\eps_\lambda$ by difference.
\end{proof}

\subsection{Passage to the limit as $\lambda\searrow0$}\label{s:2}
In this section we perform the passage to the limit as $\lambda\searrow0$,
while $\eps\in (0,c_A^{-1})$ is fixed.
We shall divide the passage to the limit is several steps.

\noindent{\bf Stochastic compactness.}
Fix $\eta\in(1/q, 1/2)$, which is possible since $q>2$. 
First of all, recalling Lemma~\ref{lem:est}, we have that the
families
$(A^\eps(u^\eps_\lambda))_\lambda$ and 
$(G(\cdot, u_\lambda^\eps)\cdot W)_\lambda$
are uniformly bounded in the space
\[
  L^q\left(\Omega;  W^{\eta,q}(0,T; V^*) \cap C^0([0,T]; H)\right)
\]
and the family $(W)_\lambda$ is constant in
$C^0([0,T];U)$. Since $H\cembed V^*$ compactly and $\eta q>1$, 
by the classical compactness results by Aubin-Lions and Simon
(see \cite[Cor.~5, p.~86]{simon}) we have the compact inclusion
\[
  W^{\eta,q}(0,T; H) \cap C^0([0,T]; H) \cembed C^0([0,T]; V^*)\,.
\]
This ensures by a standard argument based on the Markov inequality that 
the family of laws of $(A^\eps(u^\eps_\lambda))_\lambda$ and
$(G(\cdot, u_\lambda^\eps)\cdot W)_\lambda$ are tight
on the space $C^0([0,T]; V^*)$.
Secondly, it is clear that the laws of the constant sequences $(W)_\lambda$ and
$(v^\eps_0)_\lambda$
are tight on the spaces $C^0([0,T]; U)$ and $H$, respectively.

In particular, so is the family of laws of 
$(A^\eps(u^\eps_\lambda), G(\cdot, u_\lambda^\eps)\cdot W, W, v_0^\eps)_\lambda$
on the product space 
$C^0([0,T]; V^*)\times C^0([0,T]; V^*)\times C^0([0,T]; U)\times H$.
By Skorokhod's theorem (see \cite[Thm.~2.7]{ike-wata})
there exist a probability space $(\hat\Omega, \hat\cF, \hat\P)$, a family
$(\phi_\lambda)_\lambda$ of 
measurable mappings $\phi_\lambda:(\hat\Omega,\hat\cF)\to(\Omega,\cF)$ such that 
\[
  \P=\hat\P\circ\phi_\lambda^{-1} \qquad\forall\,\lambda>0\,,
\]
and measurable random variables 
$\hat v^\eps, \hat I^\eps:(\hat\Omega,\hat\cF)\to C^0([0,T]; V^*)$,
$\hat W^\eps:(\hat\Omega,\hat\cF)\to C^0([0,T]; U)$
and $\hat v_0^\eps:(\hat\Omega,\hat\cF)\to H$ such that, 
setting $\hat u_\lambda^\eps:=u^\eps_\lambda\circ\phi_\lambda$,
as $\lambda\searrow0$,
\begin{align*}
  A^\eps(\hat u^\eps_\lambda) \to \hat v^\eps 
  \qquad&\text{in } C^0([0,T]; V^*)\,, \quad\hat\P\text{-a.s.}\\
  \hat I_\lambda^\eps:=(G(\cdot, u_\lambda^\eps)\cdot W)\circ\phi_\lambda \to 
  \hat I^\eps 
  \qquad&\text{in } C^0([0,T]; V^*)\,, \quad\hat\P\text{-a.s.}\\
  \hat W_\lambda:=W\circ\phi_\lambda \to \hat W^\eps
  \qquad&\text{in } C^0([0,T]; U)\,, \quad\hat\P\text{-a.s.}\\
  \hat v_{0,\lambda}:=v_0^\eps\circ\phi_\lambda \to \hat v_0^\eps\qquad&\text{in } H\,, \quad\hat\P\text{-a.s.}
\end{align*}
Setting also $\hat v_\lambda^\eps :=v_\lambda^\eps\circ\phi_\lambda$, 
since $\P=\hat\P\circ\phi_\lambda^{-1}$ the uniform estimates given by Lemma~\ref{lem:est}
are preserved on the space $\hat\Omega$ for $(\hat u^\eps_\lambda)_\lambda$ and 
$(\hat v^\eps_\lambda)_\lambda$.
Consequently, there exist also two measurable random variables
$\hat u^\eps:(\hat\Omega,\hat\cF)\to L^2(0,T; V)$ and 
$\hat w^\eps:(\hat\Omega,\hat\cF)\to L^2(0,T; V^*)$ such that, as $\lambda\searrow0$,
\begin{align*}
  \lambda \hat u^\eps_\lambda \to 0 \qquad&\text{in } L^q(\hat\Omega; C^0([0,T]; V))\,,\\
  J^B_\lambda \hat u^\eps_\lambda\wto \hat u^\eps
  \qquad&\text{in } L^q(\hat\Omega; L^2(0,T; V))\,,\\
  B_\lambda \hat u^\eps_\lambda \wto \hat w^\eps
  \qquad&\text{in } L^q(\hat\Omega; L^2(0,T; V^*))\,,\\
  \hat v^\eps_\lambda, A^\eps(\hat u^\eps_\lambda)\to \hat v^\eps
  \qquad&\text{in } L^p(\hat\Omega; C^0([0,T]; V^*)) \quad\forall\,p\in[1,q)\,\\
  A^\eps(\hat u^\eps_\lambda)\wto \hat v^\eps \qquad&\text{in } L^q(\hat\Omega; L^2(0,T; H))\,,\\
  \hat I_\lambda^\eps\wstarto \hat I^\eps
  \qquad&\text{in } L^q(\hat\Omega; W^{1,\eta}(0,T; H))\,.
\end{align*}
Moreover, noting that $\hat u_\lambda^\eps-J_\lambda \hat u^\eps_\lambda
=\lambda R^{-1}B_\lambda \hat u^\eps_\lambda$,
it is immediate to deduce also that 
\[
  \hat u^\eps_\lambda\wto \hat u^\eps\qquad\text{in } L^{q}(\Omega; L^2(0,T; V))\,.
\]
Also, by lower semicontinuity of $\varphi^*$ and Lemma~\ref{lem:est} we deduce that
$\varphi^*(\hat v^\eps)\in L^{q/2}(\hat\Omega; L^\infty(0,T))$. Since {(H1)} implies that 
$\varphi^*$ is coercive on $H$ this immediately yields
\[
  \hat v^\eps\in L^q(\hat\Omega; L^\infty(0,T; H))\,.
\]
Moreover, since $A^\eps_{|V}:V\to2^{V^*}$ is maximal monotone by assumption {(H1)}, 
by strong-weak closure we immediately infer that 
\[
  \hat v^\eps = A^\eps(\hat u^\eps) \qquad\text{a.e.~in } \hat\Omega\times(0,T)\,.
\]
Now, noting that $\lambda R\hat u^\eps_\lambda + A^\eps(\hat u^\eps_\lambda)
= \hat v^\eps_\lambda$, it is clear that
\[
  \lambda R\hat u^\eps_\lambda + A^\eps(\hat u^\eps_\lambda) - \hat v^\eps = \hat v^\eps_\lambda -\hat v^\eps\,,
\]
hence testing by $\hat u^\eps_\lambda$ and using that $\hat v^\eps=A^\eps(\hat u^\eps)$ yields
\[
  \lambda\norm{\hat u^\eps_\lambda}_V^2 
  + (A^\eps(\hat u^\eps_\lambda) - \hat v^\eps , \hat u^\eps_\lambda)_H = 
  (\hat v^\eps_\lambda-\hat v^\eps, \hat u^\eps_\lambda)_H\,.
\]
Rearranging the terms and employing the strong monotonicity of $A^\eps$ yields
\begin{align*}
  &\lambda\norm{\hat u^\eps_\lambda}_V^2
  +\frac{c_A}2\norm{\hat u_\lambda^\eps-\hat u^\eps}_H^2\leq
  (\hat v_\lambda^\eps-\hat v^\eps, \hat u^\eps_\lambda)_H
  -(A^\eps(\hat u^\eps_\lambda)-\hat v^\eps,\hat u^\eps)_H\\
  &\quad\leq\norm{A^\eps(\hat u^\eps_\lambda)-\hat v^\eps}_{V^*}\norm{\hat u^\eps_\lambda}_V
  +\norm{A^\eps(\hat u^\eps_\lambda)-\hat v^\eps}_{V^*}\norm{\hat u^\eps}_V\,.
\end{align*}
Since $(\hat u^\eps_\lambda)_\lambda$ is bounded in $L^q(\hat\Omega; L^2(0,T; V))$
and $A^\eps(\hat u^\eps_\lambda) \to \hat v^\eps$,
$  \hat v^\eps_\lambda\to \hat v^\eps$
 in $L^p(\hat\Omega; L^2(0,T; V^*))$ for $p\in[1,q)$, we deduce also that
\[
  \hat u^\eps_\lambda \to \hat u^\eps \qquad\text{in } L^p(\hat\Omega; L^2(0,T; H)) \quad\forall\,p\in[1,q)\,,
\]
which implies by the Lipschitz-continuity of $F$ and $G$ that 
\begin{align*}
  F(\cdot, \hat u_\lambda^\eps) \to F(\cdot, \hat u^\eps)\qquad&\text{in } 
  L^p(\hat\Omega; L^2(0,T; H)) \quad\forall\,p\in[1,q)\,,\\
  G(\cdot, \hat u_\lambda^\eps) \to G(\cdot, \hat u^\eps)\qquad&\text{in } 
  L^p(\hat\Omega; L^2(0,T; \cL^2(U,H))) \quad\forall\,p\in[1,q)\,.
\end{align*}

\noindent{\bf Identification of the stochastic integral.}
By definition of $\hat I^\eps_\lambda:(\hat\Omega, \hat\cF)\to C^0([0,T]; V^*)$ we have that 
\[
  \hat I^\eps_\lambda= \hat v^\eps_\lambda + \int_0^\cdot B_\lambda \hat u^\eps_\lambda(s)\,ds 
  - \hat v_{0,\lambda} - \int_0^\cdot F(s,\hat u^\eps_\lambda(s))\,ds\,.
\]
By introducing the filtration
$(\hat\cF_{\lambda,t}^\eps)_{t\in[0,T]}$ as
\[
  \hat\cF_{\lambda,t}^\eps:=\sigma\{\hat u^\eps_\lambda(s),
  \hat I_\lambda^\eps(s), \hat W_\lambda(s): s\leq t\}\,, \qquad t\in[0,T]\,,
\]
one can show that 
$\hat I^\eps_\lambda$ is a square integrable 
$V^*$-valued martingale with respect to the filtration 
$(\hat\cF_{\lambda,t}^\eps)_{t\in[0,T]}$
with quadratic variation process given by
\[
  \qv{\hat I^\eps_\lambda}=\int_0^\cdot
  \norm{G(s, \hat u_\lambda^\eps(s))}_{\cL^2(U,V^*)}^2\,ds\,.
\]
Indeed, for any $s,t\in[0,T]$ with $s\leq t$ and for any real bounded continuous function
$g$ on $C^0([0,T]; V^*)$, recalling that $u^\eps_\lambda$ and $\hat u^\eps_\lambda$ have the same law, 
it is possible to see 
(for further details we refer here to \cite{fland-gat} and \cite[\S~8.4]{dapratozab}) that
\[
  \hat\E\left(\ip{\hat I^\eps_\lambda(t)-
  \hat I^\eps_\lambda(s)}{z}_Vg(\hat u^\eps_{\lambda|[0,s]})\right) = 0 \qquad\forall\,z\in V
\]
and
\begin{align*}
  &\hat\E\left[\left(\ip{\hat I^\eps_\lambda(t)}{z_1}_V\ip{\hat I^\eps_\lambda(t)}{z_2}_V-
  \ip{\hat I^\eps_\lambda(s)}{z_1}_V\ip{\hat I^\eps_\lambda(s)}{z_2}_V\right.\right.\\
  &\quad\left.\left.
  - \int_s^t\left(G(r, \hat u_\lambda^\eps(r))^*z_1, 
  G(r, \hat u_\lambda^\eps(r))^*z_2\right)_U\,dr\right)
  g(\hat u^\eps_{\lambda|[0,s]})\right] = 0 \qquad\forall\,z_1,z_2\in V\,.
\end{align*}
Hence, thanks to a classical representation theorem
for martingales (see \cite[Thm.~8.2]{dapratozab}), there exists a further
probability space that we can identify with no restrictions with 
$(\hat\Omega,\hat\cF,\hat\P)$,
such that (possibly enlarging the filtration $(\hat\cF_{\lambda,t}^\eps)_{t\in[0,T]}$)
\[
  \hat I^\eps_\lambda(t)=\int_0^tG(s, \hat u_\lambda^\eps(s))\,d\hat W_\lambda(s)\,, 
  \qquad t\in[0,T]\,.
\]
Now, passing to the limit as $\lambda\searrow0$ in the approximated equation we infer that
\[
  \hat v^\eps + \int_0^\cdot \hat w^\eps(s)\,ds =
  \hat v_{0}^\eps + \int_0^\cdot F(s,\hat u^\eps(s))\,ds
  + \hat I^\eps\,,
\]
while the boundedness and continuity of $g$ together with
the convergences obtained above imply that 
\[
  \hat\E\left(\ip{\hat I^\eps(t)-\hat I^\eps(s)}{z}_Vg(\hat u^\eps_{|[0,s]})\right) = 0 \qquad\forall\,z\in V
\]
and
\begin{align*}
  &\hat\E\left[\left(\ip{\hat I^\eps(t)}{z_1}_V\ip{\hat I^\eps(t)}{z_2}_V-
  \ip{\hat I^\eps(s)}{z_1}_V\ip{\hat I^\eps(s)}{z_2}_V\right.\right.\\
  &\quad\left.\left.
  - \int_s^t\left(G(r, \hat u^\eps(r))^*z_1, G(r,\hat u^\eps(r))^*z_2\right)_U\,dr\right)
  g(\hat u^\eps_{|[0,s]})\right] = 0 \qquad\forall\,z_1,z_2\in V\,.
\end{align*} 
Using now the strong convergences of $\hat W_\lambda$ to $\hat W^\eps$, 
$G(\cdot, \hat u_\lambda^\eps)$ to $G(\cdot, \hat u^\eps)$,
and $\hat I_\lambda^\eps$ to $\hat I^\eps$,
following the approach contained in \cite[\S~4.5]{SWZ18} (and the references therein)
one can show that, by possibly enlarging the probability space
$(\hat \Omega, \hat\cF, \hat\P)$,
there exists a complete right-continuous filtration
$(\hat\cF^\eps_t)_{t\in[0,T]}$ such that 
$\hat W^\eps$ is a 
cylindrical Wiener process
adapted to $(\hat\cF^\eps_t)_{t\in[0,T]}$ and
\[
  \hat I^\eps(t)=\int_0^tG(s,\hat u^\eps(s))\,d\hat W^\eps(s)\,, \qquad t\in[0,T]\,.
\]
Consequently, we deduce that 
\[
  \hat v^\eps(t) + \int_0^t\hat w^\eps(s)\,ds 
  = \hat v_{0}^\eps + \int_0^tF(s,\hat u^\eps(s))\,ds
  +\int_0^tG(s,\hat u^\eps(s))\,d\hat W^\eps(s) \quad\forall\,t\in[0,T]\,,\quad\hat\P\text{-a.s.}
\]

\noindent{\bf Identification of the nonlinearities.}
We have already proved that $\hat v^\eps=A^\eps(\hat u^\eps)$.
Let us show now that $\hat w^\eps\in B\hat u^\eps$ almost everywhere in $\hat\Omega\times(0,T)$.
To this end, recalling the proof of Lemma~\ref{lem:est} we have that 
\begin{align*}
    &\hat\E (\varphi^\eps_\lambda)^*(\hat v^\eps_\lambda(T)) + 
    \hat\E\int_0^T\ip{B_\lambda \hat u^\eps_\lambda(s)}{\hat u^\eps_\lambda(s)}_V\,ds\\
    &=\hat\E (\varphi^\eps_\lambda)^*(\hat v_0^\eps)
    +\hat\E\int_0^T\left(F(s,\hat u_\lambda^\eps(s)), \hat
      u_\lambda^\eps(s)\right)_H\,ds\\ 
      &+\frac12\hat\E\int_0^T\operatorname{Tr}
    \left(G^*(s,\hat u_\lambda^\eps(s))
    (\lambda R + DA^\eps(\hat u^\eps_\lambda(s)))^{-1}G(s,\hat u_\lambda^\eps(s))\right)\,ds\,,
  \end{align*}
with obvious meaning of the symbol $\hat\E$. Now, 
note that 
\[
  (\varphi^\eps_\lambda)^*(\hat v^\eps_\lambda(T)) =
  \frac\lambda2\norm{\hat u^\eps_{\lambda}(T)}_V^2 +
  (\varphi^\eps)^*(A^\eps(\hat u^\eps_\lambda(T))) \geq 
  (\varphi^\eps)^*(A^\eps(\hat u_\lambda^\eps(T)))\,,
\]
while from the proof of Lemma~\ref{lem:est} we also know that 
\[
  (\varphi^\eps_\lambda)^*(\hat v_0^\eps) 
  = \frac\lambda2\norm{\hat u^\eps_{0\lambda}}_V^2 + (\varphi^\eps)^*(A^\eps(\hat u_{0\lambda}^\eps))
  \leq (\varphi^\eps)^*(\hat v_0^\eps)\,.
\]
Moreover, since $A^\eps(\hat u^\eps_\lambda(T))\wto \hat v^\eps(T)$ in $L^2(\Omega; H)$, by weak
lower semicontinuity of $(\varphi^\eps)^*$ we have that
\begin{align*}
  &\limsup_{\lambda\searrow0}\hat\E\int_0^T\ip{B_\lambda \hat u^\eps_\lambda(s)}{\hat u^\eps_\lambda(s)}_V\,ds\\
  &\quad\leq\hat\E(\varphi^\eps)^*(\hat v_0^\eps) 
  -\liminf_{\lambda\searrow0}\hat\E(\varphi^\eps)^*(A^\eps(\hat u^\eps_\lambda(T)))
  +\lim_{\lambda\searrow0}\hat\E\int_0^T\left(F(s,\hat u_\lambda^\eps(s)), \hat
      u_\lambda^\eps(s)\right)_H\,ds\\
  &\qquad+\frac12\limsup_{\lambda\searrow0}\hat\E\int_0^T\operatorname{Tr}
    \left(G^*(s, \hat u^\eps_\lambda(s))
    (\lambda R + DA^\eps(\hat u^\eps_\lambda(s)))^{-1}
    G(s, \hat u^\eps_\lambda(s))\right)\,ds\\
 &\quad\leq\hat\E(\varphi^\eps)^*(\hat v_0^\eps) -\hat\E(\varphi^\eps)^*(\hat v^\eps(T))
 +\hat\E\int_0^T\left(F(s,\hat u^\eps(s)), \hat
      u^\eps(s)\right)_H\,ds\\
 &\qquad+\frac12\limsup_{\lambda\searrow0}\hat\E\int_0^T\operatorname{Tr}
    \left(G^*(s, \hat u^\eps_\lambda(s))
    (\lambda R + DA^\eps(\hat u^\eps_\lambda(s)))^{-1}
    G(s, \hat u^\eps_\lambda(s))\right)\,ds\,.
\end{align*}
Now, by Lemma~\ref{lem:ident_gat} and the strong-weak convergence, we deduce that 
\[
  \operatorname{Tr}
  \left(G^*(\cdot,\hat u^\eps_\lambda)
  (\lambda R + DA^\eps(\hat u^\eps_\lambda))^{-1}G(\cdot, \hat u^\eps_\lambda)\right) \to 
  \operatorname{Tr}
  \left(G^*(\cdot, \hat u^\eps)
  D((A^\eps)^{-1})(\hat v^\eps))G(\cdot, \hat u^\eps)\right)
\]
almost everywhere in $\hat\Omega\times(0,T)$.
Moreover, we have already proved that 
\[
  |\operatorname{Tr}\left(G^*(\cdot, \hat u^\eps)
  D((A^\eps)^{-1})(\hat v^\eps))G(\cdot, \hat u^\eps)\right)|
  \leq c \left(1+ \norm{\hat u_\lambda^\eps}^2\right)\,,
\]
where the right hand side is bounded in $L^{q/2}(\Omega\times(0,T))$
 and $c $ is a positive constant depending on data. Since $q/2>1$, 
 the right-hand side is uniformly integrable in $\Omega\times(0,T)$, 
hence so is the left-hand side, and Vitali's convergence theorem yields
\begin{align*}
  &\limsup_{\lambda\searrow0}\hat\E\int_0^T
  \ip{B_\lambda \hat u^\eps_\lambda(s)}{\hat u^\eps_\lambda(s)}_V\,ds
  \leq\hat\E(\varphi^\eps)^*(\hat v_0^\eps) -\hat\E(\varphi^\eps)^*(\hat v^\eps(T))\\
  &+\hat\E\int_0^T\left(F(s,\hat u^\eps(s)), \hat u^\eps(s)\right)_H\,ds
  +\frac12\hat\E\int_0^T\operatorname{Tr}
  \left(G^*(s,\hat u^\eps(s))D((A^\eps)^{-1})(\hat v^\eps(s))G(s,\hat u^\eps(s))\right)\,ds\,.
\end{align*}
Finally, by Proposition~\ref{prop:ito_eps} it is immediate to see that this implies
\[
  \limsup_{\lambda\searrow0}\hat\E\int_0^T\ip{B_\lambda \hat u^\eps_\lambda(s)}{\hat u^\eps_\lambda(s)}_V\,ds
  \leq \hat\E\int_0^T\ip{\hat w^\eps(s)}{\hat u^\eps(s)}_V\,ds\,.
\]
Hence, we infer that $\hat w^\eps\in B\hat u^\eps$ a.e.~in
$\hat\Omega\times(0,T)$ by \cite[Prop. 2.5, p. 27]{brezis}.

\subsection{The passage to the limit as $\eps\searrow0$}\label{s:3}
In this last section we perform the passage to the limit as $\eps\searrow0$.

First of all, by Lemma~\ref{lem:est}, the convergences proved in the previous section
and the weak lower semicontinuity of the norms, there exists a positive constant 
$M$, independent of $\eps$, such that 
\begin{align*}
  &\eps^{1/2}\norm{\hat v^\eps}_{L^q(\hat\Omega; L^\infty(0,T); H))} +
  \norm{\hat v^\eps}_{L^q(\Omega; W^{\eta,q}(0,T; V^*))} \leq M\,,\\
  &\norm{\hat u^\eps}_{L^q(\Omega; L^\infty(0,T; H))} + 
  \norm{\hat v^\eps}_{L^q(\Omega; L^\infty(0,T; H))} + 
  \norm{\varphi^*(\hat v^\eps)}_{L^{q/2}(\hat\Omega; L^\infty(0,T))} \leq M\,,\\
  &\norm{\hat u^\eps}_{L^q(\hat\Omega; L^2(0,T; V))} 
  + \norm{\hat w^\eps}_{L^q(\Omega; L^2(0,T; V^*))} \leq M\,,\\
  &\norm{G(\cdot, \hat u^\eps)}_{L^q(\Omega; C^0([0,T]; \cL^2(U,H)))}+
  \norm{G(\cdot, \hat u^\eps)\cdot \hat W^\eps}_{L^q(\Omega; W^{\eta,q}(0,T; H))}\leq M\,,\\
  &\norm{\hat v^\eps-G(\cdot, \hat u^\eps)\cdot \hat W^\eps}_{L^q(\Omega; H^1(0,T; V^*))}
  \leq M\,,
\end{align*}
Proceeding now as in the passage to the limit as $\lambda\searrow0$ in the previous 
section, using Skorokhod's theorem and the usual representation theorems
for martingales, we infer that there exist a further filtered probability space
and a cylindrical Wiener process on it, which we shall assume with no restriction
to coincide with $(\hat\Omega,\hat\cF,(\hat\cF)_{t\in[0,T]},\hat \P)$, and $\hat W$, 
respectively, such that 
\begin{align*}
  \hat v^\eps \to \hat v \qquad&\text{in } C^0([0,T]; V^*)\,, \quad\P\text{-a.s.}\,,\\
  \hat W^\eps \to \hat W
  \qquad&\text{in } C^0([0,T]; U)\,, \quad\P\text{-a.s.}\,,\\
  G(\cdot, \hat u^\eps)\cdot \hat W^\eps \to \hat I
  \qquad&\text{in } C^0([0,T]; V^*)\,,\quad\P\text{-a.s.}\,,\\
  \hat v_0^\eps \to \hat v_0 \qquad&\text{in } H\,, \quad\P\text{-a.s.}
\end{align*}
and
\begin{align*}
  \eps\hat u^\eps \to 0 \qquad&\text{in } L^q(\hat\Omega; L^\infty(0,T; H))\,,\\
  \hat u^\eps\wto \hat u\qquad&\text{in } L^q(\hat\Omega; L^2(0,T; V))\,,\\
  \hat w^\eps \wto \hat w\qquad&\text{in } L^q(\hat\Omega; L^2(0,T; V^*))\,,\\
  \hat v^\eps\to \hat v \qquad&\text{in } L^p(\Omega; L^2(0,T; V^*)) \quad\forall\,p\in[1,q)\,,\\
  \hat v^\eps\wto \hat v \qquad&\text{in } L^q(\hat\Omega; L^2(0,T; H))\,,\\
\end{align*}
Hence, the strong-weak closure of $A$ readily implies that $\hat v\in A(\hat u)$ almost 
everywhere in $\Omega\times(0,T)$.
Furthermore, since $\hat v^\eps = A^\eps(\hat u^\eps)$, we have that 
$\hat u^\eps=\eps \hat v^\eps + A^{-1}(\hat v^\eps)$, from which 
\[
  \hat v^\eps \in A(\hat u^\eps - \eps\hat v^\eps) 
  \qquad\text{and}\qquad\hat v \in A(\hat u)
\,.
\]
Consequently, the strong monotonicity of $A$ yields 
\begin{align*}
 & c_A\norm{\hat u^\eps-\hat u - \eps\hat v^\eps}_H^2 \leq
  \left(\hat v^\eps-\hat v, \hat u^\eps-\hat u - \eps\hat v^\eps\right)_H\\
  &\quad\leq
  \eps\norm{\hat v^\eps-\hat v}_H\norm{\hat v^\eps}_H 
  + \norm{\hat v^\eps-\hat v}_{V^*}\norm{\hat u^\eps - \hat u}_V\,,
\end{align*}
so that, integrating in time and recalling that $\eps\in(0,c_A^{-1})$ we have
\begin{align*}
  &\frac{c_A}{2}\norm{\hat u^\eps-\hat u}_{L^2(0,T; H)}^2\leq 
  c_A\norm{\hat u^\eps-\hat u-\eps\hat v^\eps}^2_{L^2(0,T; H)}
  +c_A\eps^2\norm{\hat v^\eps}_{L^2(0,T; H)}^2\\
  &\quad\leq \eps\norm{\hat v^\eps-\hat v}_{L^2(0,T; H)}\norm{\hat v^\eps}_{L^2(0,T; H)}
  +\norm{\hat v^\eps-\hat v}_{L^2(0,T; V^*)}\norm{\hat u^\eps-\hat u}_{L^2(0,T; V)}
  +\eps\norm{\hat v^\eps}_{L^2(0,T; H)}\,.
\end{align*}
For every $p\in[1,q)$, taking power $p/2$ at both sides and expectations, it follows
from the H\"older inequality that
\begin{align*}
  &\norm{\hat u^\eps-\hat u}_{L^p(\hat\Omega;L^2(0,T; H))}^p\leq 
  \left(\frac{2}{c_A}\right)^{p/2}\left(\eps^{p/2}\norm{\hat v^\eps-\hat v}_{L^p(\hat \Omega; L^2(0,T; H))}^{p/2}
  \norm{\hat v^\eps}_{L^p(\hat \Omega; L^2(0,T; H))}^{p/2}\right.\\
  &\quad\left.+\norm{\hat v^\eps-\hat v}_{L^p(\hat \Omega; L^2(0,T; V^*))}^{p/2}
  \norm{\hat u^\eps-\hat u}_{L^p(\hat \Omega; L^2(0,T; V))}^{p/2}
  +\eps^{p/2}\norm{\hat v^\eps}_{L^p(\hat \Omega; L^2(0,T; H))}^{p/2}\right)\,,
\end{align*}
from which, taking into account the already proved estimates, 
\[
  \norm{\hat u^\eps-\hat u}_{L^p(\hat\Omega;L^2(0,T; H))}^p\leq 
  M_p\left(\eps^{p/2} +
  \norm{\hat v^\eps-\hat v}_{L^p(\hat \Omega; L^2(0,T; V^*))}^{p/2} \right)\,,
\]
where $M_p$ is a positive constant independent of $\eps$.
Thanks to the strong convergence of $(\hat v^\eps)_\eps$, we get 
\[
  \hat u^\eps \to \hat u \qquad\text{in } L^p(\hat\Omega;L^2(0,T; H)) \quad\forall\,p\in[1,q)\,,
\]
from which 
\begin{align*}
  F(\cdot,\hat u^\eps) \to F(\cdot, \hat u) 
  \qquad&\text{in } L^p(\hat\Omega;L^2(0,T; H)) \quad\forall\,p\in[1,q)\,,\\
  G(\cdot,\hat u^\eps) \to G(\cdot, \hat u) 
  \qquad&\text{in } L^p(\hat\Omega;L^2(0,T; \cL^2(U,H))) \quad\forall\,p\in[1,q)\,,
\end{align*}
and arguing again as in the previous section, we have that 
\[
  \hat I= 
  G(\cdot, \hat u)\cdot \hat W \qquad\text{in } C^0([0,T]; V^*)\,, \quad\P\text{-a.s.}
\]

In order to conclude, we only need to prove that $\hat w\in B(\hat u)$ almost
everywhere in $\Omega\times(0,T)$. To this end, we recall the
It\^o formula for $A^\eps$ from Proposition~\ref{prop:ito_eps} and by
definition of $A^\eps$ and have
\begin{align*}
    &\frac\eps2\hat\E\norm{v^\eps(T)}_H^2 + \hat\E \varphi^*(\hat v^\eps(T)) 
    +\hat\E\int_0^T\ip{\hat w^\eps(s)}{\hat u^\eps(s)}_V\,ds\\
   & \quad=\frac\eps2\hat\E\norm{\hat v_0^\eps}_H^2 + \hat \E \varphi^*(\hat v_0^\eps)
    +\hat\E\int_0^T\left(F(s,\hat u^\eps(s)), \hat
      u^\eps(s)\right)_H\,ds\\
    &\quad+\frac\eps2\hat \E\int_0^T\norm{G (s,\hat u^\eps(s))}_{\cL^2(U,H)}^2\,ds+
    \frac12\hat\E\int_0^T\operatorname{Tr}
    \left(G^*(s,\hat u^\eps(s))(D(A^{-1})(\hat v^\eps(s)))G(s, \hat u^\eps(s))\right)\,ds\,,
\end{align*}
which by lower semicontinuity, assumption {(H4)}, and the
Dominated Convergence Theorem, implies
\begin{align*}
  &\limsup_{\eps\searrow0}\hat\E\int_0^T\ip{\hat w^\eps(s)}{\hat u^\eps(s)}_V\,ds
  \leq
  \hat \E \varphi^*(\hat v_0) -\hat\E \varphi^*(\hat v(T))\\
  &\quad+\E\int_0^T\left(F(s,\hat u(s)), \hat u(s)\right)_H\,ds
  +\frac12\hat\E\int_0^T\operatorname{Tr}
    \left(G^*(s, \hat u(s))(D(A^{-1})(\hat v(s)))G(s, \hat u(s))\right)\,ds\,. 
\end{align*}
We now use the It\^o formula for $A$ from
Proposition~\ref{prop:ito} in order to check that
\[
\limsup_{\eps\searrow0}\hat\E\int_0^T\ip{\hat w^\eps(s)}{\hat u^\eps(s)}_V\,ds\leq
\hat\E\int_0^T\ip{\hat w(s)}{\hat u(s)}_V\,ds\,.
\]
Owing to the latter, we conclude that  $\hat w\in B(\hat u)$ almost
everywhere in $\Omega\times(0,T)$ by \cite[Prop. 2.5, p. 27]{brezis}.


\section{Uniqueness and existence of strong solutions: Proof of
  Theorem \ref{th:2}}
\label{sec:uniq_strong}
We begin by showing uniqueness of martingale solutions on the same probability space.
Let $(u_1,v_1,w_1)$ and $(u_2,v_2, w_2)$ be two martingale solutions
to the problem \eqref{eq:0} on the same probability space. Then we have
\[
  d(v_1-v_2) + (w_1-w_2)\,dt = (F(\cdot,u_1)-F(\cdot, u_2))\,dt 
  +(G(\cdot,u_1)-G(\cdot, u_2))\,dW
  \,, \qquad (v_1-v_2)(0)=0\,.
\]

If $A$ is linear, continuous, and symmetric we have that 
\[
dA(u_1-u_2) + (w_1-w_2)\,dt = (F(\cdot,u_1)-F(\cdot, u_2))\,dt
+(G(\cdot,u_1)-G(\cdot, u_2))\,dW\,,
\]
so that It\^o's formula
and the Burkholder-Davis-Gundy inequality yield, for every $r\in[0,T]$,
\begin{align*}
  &\frac12\E\sup_{t\in[0,r]}\left(A(u_1-u_2))(t), (u_1-u_2)(t)\right)_H 
  + \int_0^r\ip{(w_1-w_2)(s)}{(u_1-u_2)(s)}\,ds \\
  &\leq  \E\int_0^r\left(F(s,u_1(s))-F(s,u_2(s)),(u_1-u_2)(s)\right)_H\,ds\\
  &\qquad+
  c  \E\left(\int_0^r\norm{G(s,(u_1(s))-G(s,u_2(s))}_{\cL^2(U,H)}^2
  \norm{(u_1-u_2)(s)}_H^2\,ds\right)^{1/2}\\
  &\qquad+
  \frac12\int_0^r\operatorname{Tr}\left(G^*(s,(u_1-u_2)(s))D(A^{-1})(A(u_1-u_2)(s))
  G(s,(u_1-u_2)(s))\right)\,ds
\end{align*}
where $c$ is a positive constant depending on data.
Using now the Lipschitz-continuity of $F$ and $G$, the boundedness of $D(A^{-1})$,
and the Young inequality, we deduce that for every $\delta>0$
\begin{align*}
  &\E\sup_{t\in[0,r]}\left(A(u_1-u_2))(t), (u_1-u_2)(t)\right)_H 
  + \int_0^r\ip{(w_1-w_2)(s)}{(u_1-u_2)(s)}\,ds \\
  &\leq  \delta \E\sup_{t\in[0,r]}\norm{(u_1-u_2)(t)}_H^2
  +C_\delta \E\int_0^r\norm{(u_1-u_2)(s)}_H^2\,ds\,.
\end{align*}
The monotonicity of $B$, the strong monotonicity and linearity of $A$
and the Gronwall Lemma imply that we can choose $\delta$
sufficiently small such that 
$\norm{(u_1-u_2)(t)}_H=0$ for every $t\in[0,T]$.
It follows that $u_1(t)=u_2(t)$ and $v_1(t)=v_2(t)$ for every $t\in[0,T]$,
hence also $w_1=w_2$ by comparison in the equation.

If $B$ is linear continuous and symmetric, we have 
\begin{align*}
(v_1-v_2)(t) + B\int_0^t(u_1-u_2)(s)\,ds 
&= \int_0^t(F(s,u_1(s))-F(s,u_2(s)))\,ds\\ 
&+\int_0^t\left(G(s,u_1(s))-G(s,u_2(s))\right)\,dW(s)
\end{align*}
for every $t\in[0,T]$.
Testing by $(u_1-u_2)(t)$, further integrating in time,
and using the 
Young inequality, yield, for all $r\in[0,T]$,
\begin{align*}
&\E\int_0^r\left((v_1-v_2)(t), (u_1-u_2)(t)\right)_H\,dt
+\frac12\E\ip{B\int_0^r(u_1-u_2)(s)\,ds}{\int_0^r(u_1-u_2)(s)\,ds}_V \\
&\leq\E \int_0^r\norm{\int_0^tF(s,u_1(s))-F(s,u_2(s))\,ds}_H\norm{(u_1-u_2)(t)}_H\,dt\\
&\qquad+\E\int_0^r\norm{\int_0^t\left(G(s,u_1(s))-G(s,u_2(s))\right)\,dW(s)}_H
\norm{(u_1-u_2)(t)}_H\,dt\\
&\quad\leq \frac{c_A}2\int_0^r \norm{(u_1-u_2)(t)}_H^2\,dt +
\frac{1}{c_A}\E\int_0^r\norm{\int_0^tF(s,u_1(s))-F(s,u_2(s))\,ds}_H^2\,dt\\
&\qquad+\frac{1}{c_A}\E\int_0^r\norm{\int_0^t\left(G(s,u_1(s))-G(s,u_2(s))\right)\,dW(s)}_H^2\,dt\,.
\end{align*}
Hence, the strong monotonicity of $A$, the monotonicity of $B$,
and the Lipschitz-continuity of $F$ and $G$ imply that
\[
  \frac{c_A}{2}\int_0^r\norm{(u_1-u_2)(t)}_H^2\,dt
  \leq
  \frac{L_F^2 + L_G^2}{c_A}\int_0^r\int_0^t\norm{(u_1-u_2)(s)}_H^2\,ds\,dt\,,
\]
so that $u_1=u_2$ on $[0,T]$ by the Gronwall Lemma, hence also $w_1=w_2$ by linearity. 
By comparison in the equation, also using the fact that $(v_1-v_2)(0)=0 $
it follows that $v_1=v_2$.

Finally, a classical argument shows that uniqueness of 
martingale solutions on the same probability space 
yields also existence (hence uniqueness) of a strong solution.
Indeed, this follows by a direct application of the following lemma,
due to {\sc Gy\"ongy \& Krylov} \cite[Lem.~1.1]{gyongy-krylov}.
\begin{lem}
  Let $\mathcal X$ be a Polish space and $(Z_n)_n$ be a sequence
  of $\mathcal X$-valued random variables. Then $(Z_n)_n$ converges
  in probability if and only if
  for any pair of subsequences $(Z_{n_k})_k$ and $(Z_{n_j})_j$, there exists 
  a joint sub-subsequence $(Z_{n_{k_\ell}}, Z_{n_{j_\ell}})_\ell$ converging 
  in law to a probability measure $\nu$ on $\mathcal X\times\mathcal X$ such
  that $\nu(\{(z_1,z_2)\in\mathcal X\times\mathcal X: z_1=z_2\})=1$.
\end{lem}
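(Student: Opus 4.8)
The statement is the classical Gy\"ongy--Krylov characterization of convergence in probability, and the plan is to prove the two implications separately: the forward one is immediate, while the converse is obtained by contraposition through the fact that on a Polish space convergence in probability is equivalent to the Cauchy property in probability.

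\emph{Necessity.} Suppose $Z_n\to Z$ in probability for some $\mathcal X$-valued random variable $Z$, and fix two subsequences $(Z_{n_k})_k$ and $(Z_{n_j})_j$. Both still converge to $Z$ in probability, hence the $\mathcal X\times\mathcal X$-valued sequence of pairs $\big(Z_{n_{k_\ell}},Z_{n_{j_\ell}}\big)_\ell$ converges in probability to $(Z,Z)$ along any coordinated reduction of the indices (in fact along the full sequence). Convergence in probability implies convergence in law, and the law $\nu$ of $(Z,Z)$ satisfies $\nu(\Delta)=1$, where $\Delta:=\{(z_1,z_2)\in\mathcal X\times\mathcal X:z_1=z_2\}$, because $Z(\omega)=Z(\omega)$ for every $\omega$. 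Thus $\nu$ has the required property.

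\emph{Sufficiency.} Arguing by contraposition, assume $(Z_n)_n$ does not converge in probability. Fix a complete metric $\rho$ compatible with the Polish topology of $\mathcal X$; a standard Borel--Cantelli argument (extraction of an almost surely convergent subsequence) shows that a sequence converges in probability if and only if it is Cauchy in probability, i.e.\ $\P(\rho(Z_n,Z_m)>\eps)\to0$ as $n,m\to\infty$ for every $\eps>0$. Failure of convergence therefore yields $\eps>0$, $\delta>0$ and subsequences $(n_k)_k$, $(m_k)_k$ with $\P\big(\rho(Z_{n_k},Z_{m_k})>\eps\big)\geq\delta$ for all $k$. Apply the hypothesis to the pair of subsequences $(Z_{n_k})_k$ and $(Z_{m_k})_k$, reduced along a \emph{common} index $\ell\mapsto k_\ell$: there is a sub-subsequence $\big(Z_{n_{k_\ell}},Z_{m_{k_\ell}}\big)_\ell$ converging in law to some $\nu$ with $\nu(\Delta)=1$. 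The set $C:=\{(z_1,z_2):\rho(z_1,z_2)\geq\eps\}$ is closed in $\mathcal X\times\mathcal X$ and disjoint from $\Delta$, so $\nu(C)=0$; the Portmanteau theorem for closed sets then gives $\limsup_{\ell}\P\big((Z_{n_{k_\ell}},Z_{m_{k_\ell}})\in C\big)\leq\nu(C)=0$, which contradicts $\P\big((Z_{n_{k_\ell}},Z_{m_{k_\ell}})\in C\big)\geq\P\big(\rho(Z_{n_{k_\ell}},Z_{m_{k_\ell}})>\eps\big)\geq\delta>0$. Hence $(Z_n)_n$ converges in probability.

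The only delicate point is the index bookkeeping in the sufficiency part: the hypothesis has to be invoked so that both coordinates of the pair are thinned along the \emph{same} subsequence $\ell\mapsto k_\ell$, otherwise the ``bad pairing'' $(n_k,m_k)$ realizing $\P(\rho>\eps)\geq\delta$ is not inherited by the sub-subsequence and the contradiction breaks down. Everything else reduces to the two standard facts used above (completeness $\Rightarrow$ equivalence of convergence in probability with the Cauchy property; the Portmanteau theorem for closed sets on the Polish space $\mathcal X\times\mathcal X$), so no new estimate is involved.
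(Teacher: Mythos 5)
Your proof is correct. The paper does not actually prove this lemma --- it is quoted verbatim from Gy\"ongy \& Krylov \cite[Lem.~1.1]{gyongy-krylov} and used as a black box --- so there is no in-paper argument to compare against; what you give is the standard (essentially the original) proof: trivial necessity, and sufficiency by contraposition using that on a complete separable metric space convergence in probability is equivalent to the Cauchy property in probability, combined with the Portmanteau theorem applied to the closed set $\{\rho(z_1,z_2)\geq\eps\}$, which is disjoint from the diagonal. You also correctly isolate the one genuinely delicate point, namely that the hypothesis must be applied with both coordinates thinned along a \emph{common} index so that the bad pairing survives the extraction; the only cosmetic omission is that the indices $n_k$, $m_k$ produced by the failure of the Cauchy property should be arranged to be strictly increasing (by a further extraction) so that they define bona fide subsequences, which is immediate.
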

Going back to the proofs of Theorem~\ref{th:1}, it is not difficult to check 
that the Skorokhod theorem and the uniqueness of the limit problem
yield exactly the condition of the lemma above: see for example
\cite[\S~5]{vall-zimm}. 
Hence, one can recover
strong convergences of the approximating sequences
$(v^\eps_\lambda)_\lambda$ and $(v^\eps)_\eps$
in $C^0([0,T]; V^*)$ in probability also 
on the original probability space $(\Omega,\cF,\P)$.
The conclusion follows then by the same arguments 
on the space $(\Omega,\cF,\P)$, instead of $(\hat\Omega,\hat\cF,\hat\P)$.

\section{Applications}
\label{sec:appl}

We present now the application of the abstract
existence theory to nonlinear SPDE problems. In
particular, we will provide an existence result for martingale
solutions for relation \eqref{eq:00}, when complemented with
initial and boundary conditions.

\subsection{Doubly-nonlinear SPDEs with multivalued graphs}
In the following, let $\OO \subset \Rz^d$ $(d \in \Nz)$ be a nonempty,
open, bounded, and connected domain, with Lipschitz-continuous boundary
$\Gamma$.  For definiteness we shall let 
\[
V =H_0^1(\OO)\,, \qquad H=L^2(\OO)\,, \qquad V_0=H^2(\OO)\cap H^1_0(\OO)\,.
\]
which corresponds to consider homogeneous Dirichlet conditions. 
Note however that other classes of boundary conditions, 
including Neumann, Robin, and mixed, also of nonlinear type, may be considered as well, 
at the expense of minor notational modifications. 
Moreover, we let 
\[
Y=L^{r}(\OO)\,, \qquad\text{with }
\begin{cases}
  2< r \leq \displaystyle\frac{2d}{d-2} \quad&\text{if } d\geq3\,,\\
  2< r<+\infty \quad&\text{if } d=1,2\,.
\end{cases}
\]
This choice ensures that $V \subset Y \subset H$ densely and continuously.

The function $f: (0,T)\times \OO \times\erre\to \Rz$ is assumed to be of
Carath\'eodory type
  with $f(\cdot, \cdot,u)\in L^2((0,T)\times\OO)$
  for all $u\in \Rz$ and Lipschitz continuous in the variable $u$,
  uniformly in $(0,T)\times \OO$. In particular, by defining the Nemitzsky operator
$F(t,u)(x)=f(t,x,u(x))$, for a.e. $x \in \OO$, we have that \RRR(H7) \EEE
follows. Eventually, we assume to be given another separable Hilbert
space $U$ and require that the operator-valued function $G:(0,T)\times
L^2(\OO) \to
\cL^2(U,L^2(\OO))$ satisfies assumption \RRR(H8)\EEE. 

We now turn to the specification of classes of operators $A$ which
can be treated in our framework.  Let $\hat \alpha: \Rz \to \Rz$ be
convex, define $\alpha=\partial
  \hat \alpha : \Rz\to 2^\Rz$ and assume that $\alpha^{-1} \in
  C^1(\Rz)$, $0
  \in \alpha(0)$, and there exists $c_\alpha, \, C_\alpha>0$ such that 
  \begin{align*}
  &  (v_1- v_2)(u_1 - u_2) \geq c_\alpha |u_1 - u_2|^2 \quad \forall
    u_i \in \Rz, \ v_i \in \alpha(u_i), \nonumber\\
& |v| \leq C_\alpha (1+|u|) \quad \forall
    u \in \Rz, \ v \in \alpha(u)\,.\nonumber
  \end{align*}
Let us show that these positions entail the structural assumptions
(H1)-(H4). 

First of all, from the linear growth of $\alpha$ one has that
$\hat \alpha(u) \leq C (1+|u|^2)$ for some positive $C>0$. By defining $\varphi:H \to [0,\infty]$ as
$$\varphi(u)=\int_\OO \hat \alpha(u(x))\, dx$$
one has $\varphi$ is convex, everywhere defined (hence proper), 
lower semicontinuous, and $A=\partial \varphi : H
\to 2^H$  is maximal monotone with $0\in A(0)$. 
The strong monotonicity and the 
sublinearity $A$ follow from those of $\alpha$
with the choices $C_A = \sqrt{2}C_\alpha\, \min\{|\OO|^{1/2},1\}$ and
$c_A=c_\alpha$. In particular, we have that  
$D(A)=H\supset V$.  
Secondly, the $\eps$-Yosida regularization $A^\eps$ is given by $u \in H \mapsto
\alpha^\eps(u) = (u - ({\rm Id} + \eps\alpha)^{-1}(u))/\eps\in
H$. Note that $A^\eps(V)\subset V$ as $\alpha^\eps$ is
$(1/\eps)$-Lipschitz continuous. In particular, we have that 
$$(A^\eps(u),R(u))_H = \langle R(u), A^\eps(u) \rangle = \int_\OO
\nabla u \cdot \nabla (\alpha^\eps(u))\, dx = \int_\OO (\alpha^\eps)'(u)
|\nabla u|^2 \, dx \geq 0 \quad \forall u
\in V_0\,,$$
so that assumptions (H1)--(H2) are satisfied. 

Let us check (H3).
The operator $A^{-1}:H \to H$ is Lipschitz-continuous and $A^{-1}(v) = \alpha^{-1}(v)$, $v\in H$, a.e. in $\OO$. As
$\alpha^{-1}$ is $C^1$, $A^{-1}$ is differentiable and its
differential $D(A^{-1}): H \to \cL(H,H)$ is given by 
$$ D(A^{-1})(v)h = \gamma(v)h
\quad \text{a.e. in}  \ \OO, \quad \text{for} \ \
\gamma:=(\alpha^{-1})' \in C(\Rz)\cap L^\infty(\Rz).$$ 
Let now $v_n \to v$ in $H$. Then $\gamma(v_n)\to \gamma(v)$ in
$L^r(\OO)$ for all $r<\infty$ and $\gamma(v_n)\wstarto \gamma(v)$ in $L^\infty(\OO)$. We then have that
$$(D(A^{-1})(v_n)h_1,h_2)_H = \int_\OO \gamma(v_n)h_1h_2\, dx \to
\int_\OO \gamma(v)h_1h_2\, dx=  (D(A^{-1})(v)h_1,h_2)_H, $$
so that $D(A^{-1})\in C^0(H;\cL_w(H,H))$. Since $V \subset
Y=L^r(\OO)\subset H$, one can compute that 
\begin{align*}
&\|D(A^{-1})(v_n)  - D(A^{-1})(v)\|_{\cL(Y,H)}=\sup_{\|h\|_Y=1}
\|D(A^{-1})(v_n)h  - D(A^{-1})(v)h\|_H \\
&\quad \leq  \|\gamma(v_n) -
\gamma(v) \|_{L^{2r/(r-2)}(\OO)} \to 0.
\end{align*} 
We have hence checked that  $D(A^{-1})\in
C^0(H;\cL(Y,H))$ as well. 

Let now $x,\, v \in V$ be given and define $y =(I +
D(A^{-1})((I+A^{-1})^{-1}x))^{-1}v$, namely
$$ y + D(A^{-1})((I+A^{-1})^{-1}x)^{-1} y =v.$$
Test this equation on $|y|^{r-2}y$ and integrate on $\OO$ in order to
get
$$\| y \|_Y^{r} + \int_\OO \gamma((I+A^{-1})^{-1}x)|y|^{r}\, dx
\leq  \int_\OO |v|\, |y|^{r-1}\,  dx \leq \frac{1}{r}\| v
\|_{Y}^{r} + \frac{r}{r-1}\| y \|_{Y}^{r}$$
where we used the Young inequality. 
As $V \subset Y$ we conclude that
$\|y \|_Y \leq M\| v\|_V$ for $M>0$, as required.

We are hence left with checking (H4). Let then $u_\eps \wto u$ in
$V$ and $A^\eps (u_\eps) \wto v$ in $H$ with $v \in A(u)$:
we have to show that
\begin{equation}
\gamma'(\alpha^\eps(u_\eps))h\to \gamma'(v)h \quad\text{in } H \qquad\forall\,h\in H\,.
\label{inter}
\end{equation}
Assume at first that $\alpha$ is continuous, hence single-valued. Then 
$A^\eps(u_\eps) = A(J^A_\eps u_\eps) =
\alpha (J^A_\eps u_\eps) \to \alpha (u) = v$ a.e.~in $\OO$. In
particular, for all $h \in H$ we have that 
$$D(A^{-1})(A^\eps(u_\eps))h  = \gamma'(\alpha(J^A_\eps u_\eps))h \to
\gamma'(v)h = D(A^{-1})(v)h  \quad \text{in} \ H$$
by dominated convergence
and \eqref{inter} follows. Note that we need no Lipschitz-continuity
here. In fact, the Lipschitz-continuous case has been indeed
discussed in \cite{SWZ18}. 

The argument can however be extended to include the case of
non-single-valued graphs. For the sake of definiteness, let $\alpha = s
+ \tilde \alpha$, where $\tilde \alpha$ is monotone and continuous and $s$
is the {\it sign} graph $s(x) = x/|x|$ for $x \not =0$ and
$s(0)=[-1,1]$. Let us start by checking that 
$\gamma'(\alpha^\eps(u_\eps)) \to \gamma'(v)$ a.e.~in $\OO$. In order to prove
this, we use the notation $1_A$ for the indicator function of the
measurable set $A\subset\OO$, rewrite
$$\gamma'(\alpha^\eps(u_\eps)) = 
\gamma'(\alpha^\eps(u_\eps))  1_{\{u>0\}}+
\gamma'(\alpha^\eps(u_\eps))  1_{\{u<0\}}+\gamma'(\alpha^\eps(u_\eps)) 1_{\{u=0\}},$$
and discuss each term of this sum separately. As $J^\alpha_\eps u_\eps \to u$ a.e.~in $\OO$,
we have that 
\begin{align*}
&\lim_{\eps \to 0}\gamma'(\alpha^\eps(u_\eps))  1_{\{u>0\}} =
\lim_{\eps \to 0} \gamma' (\alpha(J^\alpha_\eps u_\eps))  1_{\{u>0\}}
= \lim_{\eps \to 0} \gamma' (1+\tilde\alpha(J^\alpha_\eps u_\eps))
1_{\{u>0\}} \\\
&\quad =\gamma' (1+\tilde\alpha(u))
1_{\{u>0\}}  = \gamma'(v) 1_{\{u>0\}} \quad \text{a.e.~in } \OO\,.
\end{align*}
Analogously, $\gamma'(\alpha^\eps(u_\eps))  1_{\{u<0\}} \to \gamma'(v)
1_{\{u<0\}} $ a.e.~in $\OO$. We now show that the remaining term on the set
$\{u=0\}$ is infinitesimal, namely $\gamma'(\alpha^\eps(u_\eps))
1_{\{u=0\}} \to 0$ a.e.~in $\OO$. Indeed, we can further decompose it as
$$\gamma'(\alpha^\eps(u_\eps))  1_{\{u=0\}} =
  \gamma'(\alpha^\eps(u_\eps)) 1_{\{u=0, \,|u_\eps|\leq \eps\}}+
    \gamma'(\alpha^\eps(u_\eps)) 1_{\{u=0, \,u_\eps >\eps\}} +
      \gamma'(\alpha^\eps(u_\eps)) 1_{\{u=0, \,u_\eps<-\eps\}}.$$
Since $\alpha^\eps(r) = r/\eps$ for $|r|\leq \eps$ and $\gamma'=0$ on
$[-1,1]$, the first term in the above right-hand side vanishes.
As for the remaining terms we argue as follows
\begin{align*}
 &\lim_{\eps \to 0} \big( \gamma'(\alpha^\eps(u_\eps)) 1_{\{u=0, \,u_\eps >\eps\}} +
      \gamma'(\alpha^\eps(u_\eps)) 1_{\{u=0, \,u_\eps<-\eps\}}\big)\\
&\quad =\lim_{\eps \to 0} \big(\gamma'(1+\tilde \alpha(J^\alpha_\eps u_\eps)) 1_{\{u=0, \,u_\eps >\eps\}} +
    \gamma'(-1+\tilde \alpha(J^\alpha_\eps u_\eps))  1_{\{u=0,
  \,u_\eps<-\eps\}}\big)\\
&\quad \leq \lim_{\eps \to 0} \big( \gamma'(1+\tilde
  \alpha(J^\alpha_\eps u_\eps))+ \gamma'(-1+\tilde
  \alpha(J^\alpha_\eps u_\eps)) \big)1_{\{u=0\}}\\
&\quad = \gamma'(1+\tilde
  \alpha(0)) +  \gamma'(-1+\tilde
  \alpha(0))  =\gamma'(1)+ \gamma'(-1)=0.
\end{align*}
We have hence proved that 
$$\gamma'(\alpha^\eps(u_\eps)) \to 
\left\{
  \begin{array}{ll}
\gamma'(v) &\text{on} \ \{ u \not = 0\}\\
0&\text{on} \ \{ u = 0\}
  \end{array}
\right\}\stackrel{v \in \alpha(u)}{\equiv} \gamma'(v)
\quad \text{a.e.~in } \OO\,.
$$
Since $\gamma'$ is bounded, dominated convergence entails \eqref{inter},
so that (H4) follows.

\UUU
As for \RRR(H5)\UUU, we note that $\alpha^{-1}$ is $0$ in $[-1,1]$, and has a 
$C^1$-extension in $\pm1$. If the extension 
in a right neighbourhood of $1$ and \RRR in \UUU a left-neighbourhood of $-1$ 
is of the form $|\cdot-1|^p$ and $|\cdot+1|^{p}$, respectively, for some $p>1$,
an easy computation shows that $(\alpha^{-1})'\circ \alpha$ is $(1-1/p)$-H\"older continuous.
Hence, $(\alpha^{-1})'\circ \alpha$ maps $H^1(\OO)$ into $H^s(\OO)$
for \RRR some \UUU $s\in(2/3, 1)$ for \RRR a \UUU suitable choice of
$p$. \RRR In particular, (H5) \UUU follows \RRR by choosing $Z=H^k(\OO)$ \UUU with $k\in\enne$ large enough.
\EEE

As for initial conditions, we require $v_0 \in
L^q(\Omega,\cF_0;L^2(\OO))$ with $\hat \alpha^*(v_0)
  \in L^{q/2}(\Omega,\cF_0;L^1(\OO))$, and $ u_0 := \alpha^{-1}(v_0)
  \in L^q(\Omega,\cF_0;H^1_0(\OO))$, which is noting but \RRR (H9)\EEE.

Let us now present a class of operators $B$ fitting our frame. 
Assume to be given $\beta_1: \Rz^d \to 2^{
\Rz^d}$ and $\beta_0:\Rz\to 2^{\Rz}$ maximal, monotone, and linearly
bounded. Note that $\beta_1$ is not required to be cyclic monotone. Moreover, we assume $\beta_1$ to be coercive, namely
$$\exists\, c_{\beta}>0 : \quad c_\beta|\xi|^2 \leq \eta\cdot \xi \quad
\forall \xi\in \Rz^d, \ \eta\in \beta_1(\xi).$$
We define $B:H^1_0(\OO) \to 2^{H^{-1}(\OO)}$ by letting $w \in B(u)$
iff there exist $\xi \in L^2(\OO,\Rz^d)$ and $b \in L^2(\OO)$ with
$\xi \in \beta_1(\nabla u)$ and $b \in \beta_0(u)$ a.e. such that 
$$\langle w, v \rangle = \int_\OO \xi \cdot \nabla v \, dx + \int_\OO
b\, v \, dx\quad \forall\,v \in H^1_0(\OO).$$
It is a standard matter to check that  $B$ is actually 
defined in all of $H^1_0(\OO)$, is maximal monotone, linearly bounded,
and coercive. In particular, \RRR(H6) \EEE holds.

Along with these positions, the abstract relation \eqref{eq:0}
corresponds to the variational formulation of \eqref{eq:00} under
homogeneous Dirichlet boundary conditions, namely
\begin{align}
 &d(\alpha(u)) - {\rm div} \, \beta_1(\nabla u)\, dt + \beta_0 (u)\,
  dt \ni f(u)\, dt + G(u)\, d W \quad \text{in}  \ \ H^{-1}(\OO), \
   \text{a.e. in $\Omega \times (0,T)$}\label{eq:a1}\\
& u = 0 \quad
  \text{on}  \ \ \partial \OO, \ \text{a.e. in $\Omega \times
  (0,T)$},\label{eq:a2}\\
& \alpha(u)(0) = v_0  \quad \text{in}  \ \ H^{-1}(\OO), \  \text{a.e. in $\Omega$}.\label{eq:a3}
\end{align}

Under the above assumptions, a direct application Theorem \ref{th:1}
entails the
existence of a martingale solution to \eqref{eq:a1}-\eqref{eq:a3}. 

\begin{thm}[Existence of a martingale solution to \eqref{eq:a1}-\eqref{eq:a3}]\label{th:3}
  There exists a quintuplet  
  $$((\hat\Omega, \hat\cF, (\hat\cF_t)_{t\in[0,T]}, \hat\P),\hat W, \hat u, \hat v, \hat w),$$
  where $(\hat\Omega, \hat\cF, \hat\P)$ is a probability space endowed with a filtration
  $(\hat\cF_t)_{t\in[0,T]}$ which is saturated and right-continuous, $\hat W$ is
  a cylindrical Wiener process, and $\hat u$, $\hat v$, and $\hat \xi$
  are progressively measurable processes with values in $H^1_0(\OO)$,
  $L^2(\OO)$, and  $L^2(\OO;\Rz^d)$, respectively, such that 
\begin{align*}
  &\hat u \in L^q(\hat\Omega; L^\infty(0,T; L^2(\OO))\cap L^2(0,T; H^1_0(\OO)))\,,\\
  &\hat v \in L^q(\hat\Omega; L^\infty(0,T; L^2(\OO))\cap C^0([0,T]; H^{-1}(\OO))\,,\\
  &\hat \xi\in L^q(\hat\Omega; L^2(0,T;  L^2(\OO;\Rz^d)))\,,\\
  &\hat v\in \alpha(\hat u)\,, \quad \hat \xi\in \beta (\nabla \hat u)
  \qquad\text{a.e.~in } \hat\Omega\times \OO \times (0,T)\,,\\
  &\hat v(t)-\int_0^t{\rm div}\,\hat \xi(s)\,ds = 
  \hat v(0) + 
  \int_0^t f(\hat u(s))\,ds+
  \int_0^t G(s,\hat u(s))\,d\hat W(s) \\
&\quad\text{in
  }H^{-1}(\OO)\ \ \forall\,t\in[0,T]\,,\ \hat\P\text{-a.s.}
  \end{align*}
  and $\hat v(0)$ has the same law of $v_0$ on $H^{-1}(\OO)$.
\end{thm}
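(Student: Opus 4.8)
The plan is to derive Theorem~\ref{th:3} as a direct application of the abstract existence result Theorem~\ref{th:1}, since all its structural hypotheses have already been checked in the present concrete setting: (H1)--(H2) follow from the strong monotonicity and linear boundedness of $\alpha$ together with the sign condition $(A^\eps(u),Ru)_H=\int_\OO(\alpha^\eps)'(u)|\nabla u|^2\,dx\geq0$ on $V_0$; (H3) from $\alpha^{-1}\in C^1(\Rz)$ and the Sobolev chain $V\subset Y=L^r(\OO)\subset H$, which makes both $D(A^{-1})\in C^0(H;\cL_w(H,H)\cap\cL(Y,H))$ and the inverse operator in the second line of (H3) continuous into $Y$; (H4) from the pointwise analysis of $D(A^{-1})$ along $\alpha^\eps(u_\eps)$, valid both for continuous $\alpha$ and for $\alpha=s+\tilde\alpha$ after the decomposition on $\{u>0\}$, $\{u<0\}$, $\{u=0\}$; (H5) from the maximality, coercivity, and linear boundedness of the operator $B$ assembled from $\beta_1,\beta_0$; (H6) from the Carath\'eodory and uniform Lipschitz assumptions on $f$, which make $F(t,u)(x)=f(t,x,u(x))$ a Lipschitz Nemitzsky operator on $H=L^2(\OO)$ with $F(\cdot,0)\in L^2(0,T;H)$; (H7) by assumption on $G$; and (H8) by the imposed conditions on $v_0$, $\hat\alpha^*(v_0)$, and $u_0=\alpha^{-1}(v_0)$. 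First, then, I would simply invoke Theorem~\ref{th:1} to obtain a quintuplet $((\hat\Omega,\hat\cF,(\hat\cF_t)_{t},\hat\P),\hat W,\hat u,\hat v,\hat w)$ solving \eqref{eq:0} with the stated integrability, which with the identifications $H=L^2(\OO)$, $V=H^1_0(\OO)$, $V^*=H^{-1}(\OO)$ yields at once the claimed regularity of $\hat u$ and $\hat v$.

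The remaining step is to rewrite the abstract inclusion $\hat w\in B\hat u$ in the divergence form of \eqref{eq:a1}. By the very definition of $B$, from $\hat w\in B\hat u$ a.e.\ in $\hat\Omega\times(0,T)$ one extracts measurable selections $\hat\xi\in L^2(\OO;\Rz^d)$ and $\hat b\in L^2(\OO)$ with $\hat\xi\in\beta_1(\nabla\hat u)$ and $\hat b\in\beta_0(\hat u)$ a.e., such that $\ip{\hat w}{v}=\int_\OO\hat\xi\cdot\nabla v\,dx+\int_\OO\hat b\,v\,dx$ for all $v\in H^1_0(\OO)$, i.e.\ $\hat w=-\operatorname{div}\hat\xi+\hat b$ in $H^{-1}(\OO)$; substituting this into the identity satisfied by $(\hat v,\hat w)$ produces exactly \eqref{eq:a1}, while the homogeneous Dirichlet condition \eqref{eq:a2} is encoded in $\hat u\in H^1_0(\OO)$ and the initial condition \eqref{eq:a3} is precisely the statement that $\hat v(0)$ has the law of $v_0$ on $H^{-1}(\OO)$. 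Progressive measurability of $\hat\xi$ (and $\hat b$) is obtained through a standard measurable-selection argument, and the bound $\hat\xi\in L^q(\hat\Omega;L^2(0,T;L^2(\OO;\Rz^d)))$ follows from the linear growth $|\hat\xi|\leq C_{\beta_1}(1+|\nabla\hat u|)$ combined with $\hat u\in L^q(\hat\Omega;L^2(0,T;H^1_0(\OO)))$ (and analogously for $\hat b$).

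Since Theorem~\ref{th:1} carries all the analytic weight, no genuinely new difficulty arises at this stage: the real effort in this section is internal to the verification already performed, namely establishing (H3)--(H4) for the \emph{multivalued} graph $\alpha=s+\tilde\alpha$ — which is exactly what forces the separate treatment of the degenerate set $\{u=0\}$ and the observation that $\gamma'=0$ on $[-1,1]$ — and the fact that the operator $B$ above is well defined and maximal monotone on all of $H^1_0(\OO)$, which is what guarantees the existence of the selections $\hat\xi,\hat b$. I would finally note that, since in general neither $A$ (a multivalued superposition operator) nor $B$ (a nonlinear divergence-form operator) is linear, continuous, and symmetric, Theorem~\ref{th:2} does not apply and the martingale solution cannot in general be upgraded to a strong one, consistently with the nonuniqueness phenomena recalled after Theorem~\ref{th:2}.
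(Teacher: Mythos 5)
Your proposal is correct and follows exactly the paper's route: Section~\ref{sec:appl} verifies (H1)--(H8) for the concrete choices of $A$, $B$, $F$, $G$, $v_0$, and Theorem~\ref{th:3} is then obtained as a direct application of Theorem~\ref{th:1}, with the inclusion $\hat w\in B\hat u$ unwound via the defining selections $\hat\xi\in\beta_1(\nabla\hat u)$, $\hat b\in\beta_0(\hat u)$. Your added remarks on measurable selection, the $L^q$-bound on $\hat\xi$ from the linear growth of $\beta_1$, and the inapplicability of Theorem~\ref{th:2} are consistent with (and slightly more explicit than) the paper's one-line deduction.
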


\subsection{Stochastic equations with nonlocal terms}
Before closing this discussion let us mention that the abstract
existence result of Theorem \ref{th:1} applies to other classes of
SPDEs as well. As regards $A$, one could consider some linear
operators of positive type, even nonlocal in space. An  
example in this direction is $A = \partial \varphi$ for 
$$ \varphi(u) = \frac{c_A}{2}\int_\OO |u|^2\, dx + \frac12\int \!\! \int_{\OO\times\OO}
k(x,y)(u(x)-u(y))^2dx \, dy$$
where $c>0$ and $k\in L^2(\OO \times \OO)$ nonnegative and symmetric. In
particular, $\varphi$ is convex and lower semicontinuous and $Au(x)
= \partial \varphi(u)(x) = cu(x) + \int_\OO k(x,y)\, u (y) \ dy$ for
a.e. $x \in \OO$. Assumptions (H1)--(H3) are hence easy to check and
assumption (H4) would actually be not needed here, for $A$ is already
Lipschitz-continuous. 

Nonlocal operators $B$ could also be considered. A relevant example in
this direction is the fractional laplacian $(-\Delta)^r$ with $r\in
(0,1)$. When with homogeneous Dirichlet boundary conditions
in $\Rz^d\setminus \OO$, this can be variationally formulated
by letting $H=\{ v\in L^2(\Rz^d) \ | \ v =0 \ \text{a.e. in} \
\Rz^d\setminus \OO \}$ and
 defining the fractional Sobolev space \cite{hitchhiker}
\begin{align*}
  &V:=H^r_0(\OO)=\{ v\in H \ | \  [v]_{H^r(\OO)}<\infty\}
\end{align*}
where the {\it Gagliardo seminorm} $[v]_r$ reads
$$ [v]_{H^r(\OO)}:= \left(\int\!\!\int_{\OO \times \OO} \frac{|v(x)-v(y)|^2}{|x-y|^{d+2r}}\, dx \,
dy\right)^{1/2}.$$ 
When endowed with the norm $\| v \|_V^2:= \| v
\|_{L^2(\Rz^d)}^2+[v]^2_{H^r(\OO)}$, the Hilbert space $V \subset H$
densely and continuously. The subdifferential $B= \partial
[\cdot]^2_{H^r(\OO)}: V \to V^*$ is hence linear, positive, and
continuous and delivers a weak formulation of the
fractional laplacian $(-\Delta)^r$, up to a
multiplicative dimensional constant \cite{akagifrac}.

\subsection{Systems of doubly-nonlinear SPDEs}
Eventually, as $B$ is not required to be
cyclic monotone, one can tackle some classes of SPDE
systems as well. 
An example in this direction is
 \begin{align}
   &d( \vec \alpha( \vec u)) - {\rm Div} \, b_1(\nabla \vec u) \, dt   + \vec b_0
     (\vec u)\, dt\ni \vec F(\vec u)\,
     dt + \vec G (\vec u)
     \, d \vec W \quad \text{in} \  (H^{-1}(\OO))^n, \ \text{a.e. in} \
     \Omega \times (0,T)\label{eq:s}
 \end{align}
 where
$\vec u =(u_1,\dots,u_n): \Omega \times [0,T]\times \OO\to \Rz^n$, now with
$\vec u(t)\in V=(H^1_0(\OO))^n$ a.e.~in $\OO$. The maximal monotone graph 
$\vec \alpha: \Rz^n \to 2^{\Rz^n}$ is assumed to be diagonal, namely
$\vec\alpha(\vec u) = {\rm diag}(\alpha_1(u_1),\dots,\alpha_n(u_n))$, with all
$\alpha_i:\Rz \to 2^{\Rz}$ being of the type discussed above. The graphs
$b_1: \Rz^{n\times d} \to 2^{\Rz^{n\times d}}$ and $\vec b_0: \Rz^n
\to 2^{\Rz^n}$ are maximal monotone, possibly noncyclic, 
  linearly bounded and $b_1$
is coercive. Note that in the vectorial case ${\rm Div}$ is 
 the standard tensorial divergence, namely 
 $({\rm Div}\, b)_i= \sum_{j=1}^d\partial_j b_{ij}$. We define the operator $B:
V \to 2^{V^*}$ as $ w \in B(\vec u)$ iff $ \xi \in (L^2(\OO))^{n\times d}$
 and $\vec b \in (L^2(\OO))^{n}$ exist
 such that $ \xi \in b_1(\nabla  \vec u)$, $\vec b \in
 \vec b_0( \vec u)$ a.e.~in $\OO$, and 
$$\langle  w,  \vec z \rangle  =   \sum_{i=1}^n\int_\OO \vec\xi_i \cdot
\nabla z_i \,  d x  + \int_\OO 
\vec b \cdot  \vec z \, dx \quad \forall  z \in (H^1_0(\OO))^n$$
and remark that it
fulfills \RRR(H6) \EEE by not being cyclic monotone.
Asking $\vec u \mapsto \vec F(\vec u)$ to be Lipschitz
continuous, and $\vec G$ and $\vec W$ to be corresponding vectorial
versions of operator-valued coefficients and cylindrical Wiener
processes, the initial-value problem for the SPDE system \eqref{eq:s} can be variationally reformulated as   
 relation \eqref{eq:0} and the abstract Theorem \ref{th:1} provides the existence of
martingale solutions.


\section*{Acknowledgement}
This work has been partially supported by the Vienna Science and
Technology Fund (WWTF) through the project MA14-009 and by the
Austrian Science Fund (FWF) projects F\,65 and  I\,4354. \EEE

\bibliographystyle{abbrv}
\def\cprime{$'$}

\end{document}